\documentclass[11pt,a4paper]{article}
\usepackage[utf8]{inputenc}
\usepackage[english]{babel}
\usepackage{amsmath}
\usepackage{amsfonts}
\usepackage{amssymb}
\usepackage{amsthm}

\usepackage{graphicx}

\usepackage[margin=3cm]{geometry}

\usepackage{color}

\newtheorem{theorem}{Theorem}[section]

\newtheorem{definition}{Definition}
\newtheorem{lemma}{Lemma}

\newtheorem{remark}{Remark}[section]


\allowdisplaybreaks

\usepackage[colorlinks=false]{hyperref}

\title{Fractional Boundary Value Problems and elastic sticky Brownian motions, II: Non-local dynamic boundary conditions on smooth domains}

\author{Mirko~D'Ovidio\footnote{Sapienza University of Rome.
    {mirkodovidio@uniroma1.it}}}

\newcommand{\blue}{black}
\newcommand{\magenta}{black}

\begin{document}

\maketitle

\begin{abstract}
Sticky diffusion processes on bounded domains can spend finite time (and finite mean time) on the lower-dimensional space given by the boundary. Once the process hits the boundary, then it starts again after a random amount of time. While on the boundary it can stay or move according to dynamics that are different from those in the interior. Such processes may be characterized by a time-derivative appearing in the boundary condition for the governing problem. We use suitable time changes in order to describe fractional sticky conditions and the associated boundary behaviours. We obtain that fractional boundary value problems (involving fractional dynamic boundary conditions) lead to sticky diffusions, strong Markov on the interior, spending an infinite mean time (and finite time) on the boundary. Such a behaviour can be associated with a trap effect from the macroscopic point of view.
\end{abstract}

{\bf Keywords} 
{Dynamic boundary conditions;
Time changes;
Brownian motions;
Trap domains}

\tableofcontents
\section{Introduction}

We show that a fractional dynamic boundary condition for the heat equation can be associated with a suitable time-change for a Brownian motion in a bounded domain. The associated fractional boundary value problem leads to the characterization of the time and the mean time the process spends on the boundary. 

In the literature, the author found many papers dealing with fractional boundary value problems but the meaning is completely different. Indeed, they deal with fractional Cauchy problems (FCP) or non local PDEs with local boundary conditions. 

\subsection{State of the art in a nutshell}
\label{nutshell}

\paragraph{Non-local initial value problems (NLIVP).} 

Consider the Caputo-D\v{z}rba\v{s}jan fractional derivative $D^\alpha_t$ introduced in \cite{caputoBook, CapMai71, CapMai71b}  by Caputo and separately in a series of works starting from \cite{Dzh66, DzhNers68} by D\v{z}rba\v{s}jan. The fractional Cauchy problem $D^\alpha_t u = \Delta u$, $u_0=f$ (of the type in formula \eqref{probXL} below) has been investigated by many researchers from both analytical and probabilistic point of view (see for example  \cite{BaeMee2001, Baz2000, Kiryakova94, Koc89, MLP2001, MNV09} and the recent works \cite{Dov12, GLY15, Kolo19, luchko20}). Here, we have \emph{local boundary conditions}. It is well-known that the probabilistic solution is obtained through time change given by the inverse to an $\alpha$-stable subordinator. We refer to fractional operators with a clear identification of the fractional order $\alpha \in (0,1)$. In the general case we may refer to the symbol $\Phi$ and write $D^\Phi_t$ (see for example \cite{Chen17, Koc2011, Kolo19, Toaldo2015}). The general class of non-local operators obviously includes the fractional ones.

\paragraph{Non-local boundary value problems (NLBVP).} In the present paper we are interested in local problems with \emph{non-local boundary conditions} written in terms of fractional (time) operators (of the type in formula \eqref{probXbar} below). To the best of our knowledge there are no works dealing with this problem. Our results seem to be new and well accords well with the existing literature. Our study has been inspired by \cite{SW05} where the drifted sticky Brownian motion and the Ray process have been considered as solutions for a problem with dynamic boundary conditions. Further references for the sticky (or Feller-Wentzell) condition are given by the pioneering works \cite{Feller52, Wentzell59} whereas, for the probabilistic description in terms of sample paths we recall \cite[Section 10]{ItoMcK}. Further investigation has been carried out in many subsequent works, starting, for example, from \cite{Amir91, Warren97} for the sticky Brownian motion or \cite{CleTim86, TaiFavRom2000} for the dynamic boundary conditions. 

For the physical interpretation of the dynamic boundary condition we refer to \cite{Gold2006}. From the probabilistic viewpoint, the (pure) sticky condition (given by $\Delta u=0$ on $\partial \Omega$) represents instantaneous absorption whereas, the (reflecting) sticky condition ($\eta \Delta u + \sigma \partial_{\bf n} u=0$ on $\partial \Omega$) represents slow reflection. Thus, a reflected sticky (or slowly reflected) Brownian motion spends some time on the boundary according to a suitable time change. This means that the process is forced to stay on the boundary with each visit (see Section \ref{sec:secX} below).

Our result is concerned with the elastic sticky Brownian motion ($\eta \Delta u + \sigma \partial_{\bf n}u + c u = 0$ on $\partial \Omega$) and the fractional dynamic boundary condition written in terms of the so called Caputo-D\v{z}rba\v{s}jan derivative $D^\alpha_t$. We show that the associated process turns out to be delayed only on the boundary so that we get infinite average of (occupation) time on the boundary. The reflections slow down according to independent extra times. This can be regarded as a simple case of trapping boundaries and finds many possible applications for the characterization of motions on trap boundaries (see \cite{BCM06}) or in trap vertices of metric graphs (see \cite{ColDov, BonDov}). 

We provide new results in case of order $\alpha \in (0,1]$ and dimension $d>1$. The existing literature is only concerned with the case $\alpha=1$ from the analysis point of view and $\alpha=1$, $d=1$ from the probabilistic viewpoint.

\subsection{Notations}
\label{sec:intro}

Let $\eta, \sigma, c$ be positive constants. For an open bounded set $\Omega \subset \mathbb{R}^d$, $d>1$, with smooth boundary $\partial \Omega$, we can equip the compact Euclidean space $\overline{\Omega} = \Omega \cup \partial \Omega$ with a finite Borel measure
\begin{align}
m(dx) = \mathbf{1}_\Omega\, dx + (\eta/\sigma)\, \mathbf{1}_{\partial \Omega} \, m_\partial(dx)
\label{mMeasure}
\end{align}
defined as the sum of the $d$-dimensional Lebesgue measure supported on the interior and the $(d-1)$-dimensional Hausdorff measure supported on the boundary. In particular, $m_\partial$ will be the $(d-1)$-dimensional volume measure on $\partial \Omega$. 

We introduce briefly the processes we deal with further on. As usual, we denote by $\mathbf{E}_x$ the mean operator under the measure $\mathbf{P}_x$ where $x$ is a starting point:
\begin{itemize}
\item[i)] $X^+= \{X^+_t\}_{t\geq 0}$ is a reflecting Brownian motion on $\overline{\Omega}$ with boundary local time $\gamma^+ = \{\gamma^+_t\}_{t\geq 0}$. The process $X^+$ has generator $(G^+, D(G^+))$ where $G^+=\Delta$ is the Neumann Laplacian with 
\begin{align*}
D(G^+)= \{\varphi, \Delta \varphi \in C(\overline{\Omega}),\, \varphi \in H^1(\Omega)\,:\, \partial_{\bf n} \varphi = 0\};
\end{align*}
\item[ii)] $X^{el}=\{X^{el}_t\}_{t\geq 0}$ is an elastic Brownian motion on $\overline{\Omega}$ with
\begin{align*}
\mathbf{E}_x[f(X^{el}_t)] = \mathbf{E}_x[f(X^+_t)\, e^{-(c/\sigma) \gamma^+_t}], \quad t>0,\; x \in \overline{\Omega}.
\end{align*}
Let $(G^{el}, D(G^{el}))$ be the generator of $X^{el}$. Then, $G^{el}=\Delta$ and
\begin{align*}
D(G^{el}) = \{\varphi, \Delta \varphi \in C(\overline{\Omega}), \, \varphi \in H^1(\Omega)\,:\, 0 = \sigma \partial_{\bf n} \varphi  + c \varphi |_{\partial \Omega}\};
\end{align*}
\item[iii)] $X^\dagger = \{X^\dagger_t\}_{t\geq 0}$ is a Brownian motion on $\Omega$ killed upon reaching the boundary $\partial \Omega$ for which
\begin{align*}
\mathbf{E}_x[f(X^\dagger_t)] = \mathbf{E}_x[f(X^+_t), t< \tau_{\partial \Omega}], \quad t>0,\; x \in \Omega
\end{align*}
where $\tau_{\partial \Omega} = \inf \{t \geq 0\,:\, X^+_t \in \partial \Omega\}$. The process $X^\dagger$ has generator $(G^\dagger, D(G^\dagger))$ where $G^\dagger=\Delta$ is the Dirichlet Laplacian and
\begin{align*}
D(G^\dagger) = \{ \varphi, \Delta \varphi \in C(\overline{\Omega})\,:\, \varphi |_{\partial \Omega} = 0\};
\end{align*}
\item[iv)] $X = \{X_t\}_{t\geq 0}$ is an elastic sticky Brownian motion on $\overline{\Omega}$ with boundary local time $\gamma=\{\gamma_t\}_{t \geq 0}$. The process is termed elastic sticky according to the Wentzell-Robin boundary condition.  
The sticky process $X$ with elastic kill has generator $(G, D(G))$ with $G=\Delta$ and
\begin{align*}
D(G) = \{\varphi, \Delta \varphi \in C(\overline{\Omega}), \, \varphi \in H^1(\Omega)\,:\, \eta (\Delta \varphi) |_{\partial \Omega} = - \sigma \partial_{\bf n} \varphi  - c \varphi |_{\partial \Omega}\}
\end{align*}
where $\partial_{\bf n}\varphi$ is the outer normal derivative with respect to $m$ introduced in \eqref{mMeasure}. We recall that $u|_{\partial \Omega}$ is the trace function (continuous operator) from $H^1(\Omega)$ into $L^2(\partial \Omega, m_\partial)$, such that
\begin{align*}
u \mapsto Tu=u|_{\partial \Omega}
\end{align*}
for $u \in H^1(\Omega)\cap C(\overline{\Omega})$. The semigroup generated by $(G, D(G))$ is a compact, positive $C_0$-semigroup on $C(\overline{\Omega})$. As $\sigma \to \infty$ (or equivalently $\eta=c=0$), we have the process $X^+$ with generator $(G^+, D(G^+))$ whereas, as $c\to \infty$ (or equivalently $\eta=\sigma=0$) we have the process $X^\dagger$ with generator $(G^\dagger, D(G^\dagger))$. The case $\eta \to \infty$ leads to the (pure) sticky condition whereas, in case $\eta \to 0$ we get $X^{el}$ under Robin condition;
\item[v)] $H=\{H_t\}_{t\geq 0}$ is a stable subordinator with $\mathbf{E}_0[\exp(-\lambda H_t)] = \exp(-t \lambda^\alpha)$, $\alpha \in (0,1)$;
\item[vi)] $L=\{L_t\}_{t\geq 0}$ is the inverse $L_t= \inf\{s \geq 0\,:\, H_s >t\}$ to $H$.
\end{itemize}
With no abuse of notation we refer to $\tau_{\partial \Omega}$ for the first hitting time of the boundary $\partial \Omega$ of the processes previously introduced if no confusion arises. We also use the notation
\begin{align*}
X_t = X \circ t \quad \textrm{and} \quad X_{T_t} = X \circ T_t
\end{align*} 
for a given random time $T_t$.

\subsection{Results}
Our results are concerned with the processes:
\begin{itemize}
\item $X^L = \{X^L_t\}_{t\geq 0}$ with $X^L_t := X\circ L_t$ (see Section \ref{sec:secXL} for details). The process $X^L$ has been extensively investigated. It can be associated with a {\bf NLIVP} also known as FCP;
\item $\hat{X} = \{\hat{X}_t\}_{t\geq 0}$ with boundary local time $\hat{\gamma}= \{\hat{\gamma}_t\}_{t\geq 0}$.  We introduce $\hat{X}_t$ as the composition $X \circ \hat{L}_t$ where $\hat{L} = \{\hat{L}_t\}_{t \geq 0}$ is a special time change defined below (see Section \ref{sec:hatX} for $\hat{L}$ and  Section \ref{sec:hatX} for $\hat{X}$). We show that the process $\hat{X}$ can be associated with a {\bf NLIVP}. The process $\hat{L}$ gives a sort of bridge between NLIVPs and NLBVPs. After the definition and the characterization of the process $\hat{X}$ we state the main result in Theorem \ref{lemmaWrond} giving the PDEs connection;
\item $\bar{X} = \{\bar{X}_t\}_{t \geq 0}$ with boundary local time $\bar{\gamma}= \{\bar{\gamma}_t\}_{t\geq 0}$. We show that the process $\bar{X}$ can be associated with a {\bf NLBVP}, the main result is stated in Theorem \ref{thm:MAINI}. In particular, $\bar{X}_t$ is obtained as the composition $X^{el} \circ \bar{V}^{-1}_t$ where $\bar{V}^{-1}$ is the inverse of $\bar{V} = \{\bar{V}_t\}_{t\geq 0}$ defined in formula \eqref{Vbar} below (see also Section \ref{sec:barX}). The elastic Brownian motion $X^{el}$ will be written is terms of the couple $(X^+, \gamma^+)$ and the random time $\bar{V}^{-1}$ introduces a fractional dynamic condition for the reflected diffusion $X^+$.  It is worth noticing that $\hat{X}$ and $\bar{X}$ have the same (delayed) behaviour on the boundary.
\end{itemize}

We proceed with a presentation of the results. Our aim is to study the heat equation on $\Omega$ with non-local dynamic condition
\begin{align}
\eta D^\alpha_t u = - \sigma \partial_{\bf n} u - c u \quad \textrm{on} \quad \partial \Omega 
\label{DBCintro}
\end{align}
where $D^\alpha_t$ is a non-local operator in time (Caputo-D\v{z}rba\v{s}jan  derivative) defined as 
\begin{align}
D^\alpha_t u(t,x) =
\left\lbrace
\begin{array}{ll}
\displaystyle  \frac{1}{\Gamma(1-\alpha)} \int_0^t \frac{\partial u}{\partial s}(s,x) (t-s)^{-\alpha} ds, & \alpha \in (0,1),\\
\\
\displaystyle \frac{\partial u}{\partial t}(t,x), & \alpha=1.
\end{array}
\right.
\label{CDderDef}
\end{align} 
For $z>0$, $\Gamma(z) = \int_0^\infty s^{z-1} e^{-s}ds$ is the absolutely convergent Euler integral. As usual, we refer to $\Gamma(\cdot)$ as the gamma function.

For $\alpha=1$ the previous condition takes the form
\begin{align*}
\eta \frac{\partial u}{\partial t} = - \sigma \partial_{\bf n} u - c u \quad \textrm{on} \quad \partial \Omega 
\end{align*}
which corresponds to the Wentzell-Robin boundary condition
\begin{align*}
\eta \Delta u = - \sigma \partial_{\bf n}u - cu \quad \textrm{on} \quad \partial \Omega.
\end{align*}
Thus, we obtain a well-known problem, the Laplacian generates a positive contraction semigroup (\cite{AMPR03}). If $\partial \Omega$ is $C^\infty$ then  we may consult \cite{LionMagenes72} whereas, if $\partial \Omega$ is Lipschitz, then we may refer to \cite{JerisonKenig1981}. From the probability view point, the Wentzell boundary condition leads to a sticky Brownian motion and the Robin boundary condition leads to an elastic Brownian motion. The associated process is $X$ on $\overline{\Omega}$ with generator $(G,D(G))$ introduced above. 

For $\alpha\in (0,1)$ we provide a description of the associated stochastic process and focus on the stochastic dynamic near the boundary, then we mainly restrict our analysis to a bounded domain $\Omega \subset \mathbb{R}^d$ meaning an open, connected and non-empty set with $C^\infty$ boundary $\partial \Omega$.

In order to have a clear picture of our results we first discuss a collection of related problems. Recall that $\eta, \sigma, c$ are positive constants. Let us consider the dynamic boundary value problems
\begin{align}
\left\lbrace
\begin{array}{ll}
\displaystyle \frac{\partial v}{\partial t}(t,x) = A v(t,x), \quad t>0, \; x \in \Omega,\\
\\
\displaystyle v(0,x) = f(x), \quad x \in \Omega,\\
\\
\displaystyle \mathfrak{v}(t,x) = Tv(t,x), \quad t>0, \; x \in \partial \Omega,\\ 
\\
\displaystyle \eta \frac{\partial \mathfrak{v}}{\partial t} (t,x) = B v(t,x), \quad t>0,\; x \in \partial \Omega,\\
\\
\displaystyle \mathfrak{v}(0,x) = Tv(0,x) = f(x), \quad x \in \partial \Omega,
\end{array}
\right .
\label{probX}
\end{align}
\begin{align}
\left\lbrace
\begin{array}{l}
\displaystyle D^\alpha_t w(t,x) = A w(t,x), \quad t>0, \; x \in \Omega, \\
\\
\displaystyle w(0,x) = f(x), \quad x \in \Omega\\
\\
\displaystyle \mathfrak{w}(t,x)= Tw(t,x), \quad t>0,\, x \in \partial \Omega\\
\\
\displaystyle \eta D^\alpha_t \mathfrak{w}(t,x) = B w(t,x), \quad t>0,\, x \in \partial \Omega,\\
\\
\displaystyle \mathfrak{w}(0,x) = f(x), \quad x \in \partial \Omega
\end{array}
\right .
\label{probXL1}
\end{align}
and
\begin{align}
\left\lbrace
\begin{array}{l}
\displaystyle \frac{\partial u}{\partial t}(t,x) = A u(t,x), \quad t>0, \; x \in \Omega, \\
\\
\displaystyle u(0,x) = f(x), \quad x \in \Omega\\
\\
\displaystyle \mathfrak{u}(t,x)= Tu(t,x), \quad t>0,\, x \in \partial \Omega\\
\\
\displaystyle \eta D^\alpha_t \mathfrak{u}(t,x) = B u(t,x), \quad t>0,\, x \in \partial \Omega,\\
\\
\displaystyle \mathfrak{u}(0,x) = f(x), \quad x \in \partial \Omega
\end{array}
\right .
\label{probXbar1}
\end{align}
for $f \in C(\overline{\Omega})$ and $D(B) \subset D(A)$. Further on we consider $A=\Delta $ and $B = -\sigma \partial_{\bf n} - c$. 

Focus on the problem \eqref{probX}. If $\Delta u \in C(\overline{\Omega})$, then (consult for example \cite{EngFra05})
\begin{align}
\label{SameArgs}
\eta \frac{\partial \mathfrak{v}}{\partial t} = \eta \frac{\partial Tv}{\partial t} = \eta T \frac{\partial v}{\partial t} = \eta T \Delta v,
\end{align}
that is $\eta \Delta v = - \sigma \partial_{\bf n} v - c v$ on $\partial \Omega$ due to the definition of $B$. The solution to the problem \eqref{probX} can be written in terms of $X$ with generator $(G,D(G))$, 
\begin{align*}
v(t,x) = \mathbf{E}_x[f(X_t)], \quad t\geq 0,\, x \in \overline{\Omega}.
\end{align*}

({\bf NLIVP}) Under $\Delta w \in C(\overline{\Omega})$, the problem \eqref{probXL1} can be written as
\begin{align}
\left\lbrace
\begin{array}{l}
\displaystyle D^\alpha_t w(t,x) = G w(t,x), \quad t>0, \; x \in \overline{\Omega}, \\
\\
\displaystyle w(0,x) = f(x), \quad x \in \overline{\Omega}, \quad f \in D(G),
\end{array}
\right .
\label{probXL}
\end{align}
and this actually brings our problem in the context of NLIVPs. Since $(G, D(G))$ generates the elastic sticky Brownian motion $X$, we have the probabilistic representation  
\begin{align*}
w(t,x) = \mathbf{E}_x[f(X^L_t)], \quad t\geq 0, \, x\in \overline{\Omega}
\end{align*}
for the solution $w$ to \eqref{probXL1}. This result follows from well-known results after suitable adaptation (we discuss on this in Section \ref{sec:secXL}). We will show that $\hat{X}$ can be associated with \eqref{probXL} only if $supp[f] \subseteq \partial \Omega$ (Theorem \ref{lemmaWrond}). We observe that the problem associated with $X^L$ (and therefore with $\hat{X}$) can be formulated in terms of the operator matrix on $C(\overline{\Omega})$ given by
\begin{align}
\mathcal{A} = \left( \begin{array}{cc}
A & 0\\
B & 0
\end{array} \right), \quad D(\mathcal{A}) = \left\lbrace \left( \begin{array}{c} w\\ Tw \end{array} \right) \in D(A) \times C(\partial \Omega)\, \right\rbrace
\end{align}
for which we have
\begin{align*}
D^\alpha_t \left( \begin{array}{c} w\\ Tw \end{array} \right) = \mathcal{A} \left( \begin{array}{c} w\\ Tw \end{array} \right), \quad \alpha \in (0,1].
\end{align*}
This formulation does not hold for the problem \eqref{probXbar1}.\\

({\bf NLBVP}) By following the same arguments as in \eqref{SameArgs}, we write \eqref{probXbar1} as 
\begin{align}
\left\lbrace
\begin{array}{l}
\displaystyle \frac{\partial u}{\partial t}(t,x) = \Delta u(t,x), \quad t>0, \; x \in \Omega, \\
\\
\displaystyle \eta D^\alpha_t Tu(t,x) = - \sigma \partial_{\bf n} u(t,x) - c \, u(t,x), \quad t>0, \; x \in \partial \Omega,\\
\\
\displaystyle u(0,x) = f(x), \quad x \in \overline{\Omega}.
\end{array}
\right .
\label{probXbar}
\end{align}
We relate the problem \eqref{probXbar} with $\bar{X}$. In particular, we show (Theorem \ref{thm:MAINI}) that the solution $u(t,x) := \mathbf{E}_x[f(\bar{X}_t)]$ to \eqref{probXbar} can be written as 
\begin{align*}
u(t,x) = \mathbf{E}_x[f(X^{el} \circ \bar{V}^{-1}_t)] = \mathbf{E}_x [f(X^+ \circ \bar{V}^{-1}_t) \, M^+ \circ \bar{V}^{-1}_t]
\end{align*}
where $\bar{V}^{-1}_t := \inf\{ s \, :\, \bar{V}_s > t\}$ is the inverse to the process
\begin{align}
\label{Vbar}
\bar{V}_t := t +  H \circ (\eta/\sigma) \gamma^+_t
\end{align}
and $M^+_t = e^{-(c/\sigma) \gamma^+_t}$ is the multiplicative functional associated to the elastic condition (see formulae \eqref{lawExpLT} and \eqref{equivgammaBAR} below). This representation introduces a time change written in terms of a subordinator $H$ independent from $X^+$ and provides a clear meaning for the non-local condition appearing in \eqref{probXbar}. Since the process $H$ may have jumps, then the inverse to $\bar{V}_t$ slows down the process according to the plateaus associated with that jumps.  \\

({\bf Holding times}) {\color{\magenta} The sticky holding times play a crucial role in the present work and encompass the case of holding times (permanent occupation of a state). Given the special nature of the process $X$ we refer to sticky holding times meaning the occupation times of $\partial \Omega \subset \Lambda_\epsilon$ (see \eqref{occLTe} below) in a visit to the $\epsilon$-neighbourhood $\Lambda_\epsilon$ of the boundary. In the present work a process is termed sticky if the associated boundary occupation set has positive Lebesgue measure. Sticky holding times are strictly related to the new clock of $X^+$ introduced by $\bar{V}_t$ which is continuous only in case $\alpha=1$ (and we write $V_t$). For $\alpha \in (0,1)$ the time change provides a clear understanding of the occupation time on $\partial$ in terms of holding times due to the plateaus of $\bar{V}_t^{-1}$. Thus, the process can also occupy a single point for a positive random time whereas for $\alpha=1$ we only have sticky holding time, that is the time accumulated on $\partial \Omega$ by $X$ is only due to the infinitely many crossings of $X^+$. As a global behaviour,} the processes $\bar{X}$ and $\hat{X}$ spend an infinite (mean) amount of time on the boundary. Since $H$ is independent of $X^+$ (and $X$), then the delay on the boundary for $\bar{X}$ (and $\hat{X}$) can be regarded as a trapping effect. Such behaviour can be considered in many contexts (see Section \ref{Sec:ConOP}). For $\bar{X}$ (or $\hat{X}$) on metric graphs (see \cite{BonDov}) for example, the holding times on a set of vertices provide a clear reading for data/traffic flows. Moreover, due to the independence of $H$, the holding times and {\color{\magenta} the sticky holding times} can be charged to exogenous causes, for example to some property of the ("true") boundary in case of real world applications. We say that $\bar{X}$ on a regular domain ($\Omega$ smooth as it is here) can be considered as a model for data coming from an elastic Brownian motion $X^{el}$ on domains with irregular boundary (trap domains for instance). We clarify this point in the last section of the work. {\color{\magenta} An interesting reading} can be given in terms of occupation measures. For a set $\Lambda \subset \mathbb{R}^d$, assume that
\begin{align}
\mathbf{E}_x \left[ \int_0^\infty \mathbf{1}_\Lambda(X_t) dt \right] < \infty \quad \textrm{for} \quad  \Lambda \subseteq \overline{\Omega}, \quad x \in \overline{\Omega}.
\label{ASShtINTRO}
\end{align}
Due to the heavy tailed time change on the entire domain, we have that (see \eqref{occupMeasXL} below) 
\begin{align}
\textrm{ for all } x \in \overline{\Omega}, \quad \mathbf{E}_x \left[ \int_0^\infty \mathbf{1}_\Lambda(X^L_t) dt \right] = \infty \quad \textrm{for} \quad \Lambda \subseteq \overline{\Omega}   
\label{occupation1intro}
\end{align}
whereas, because of the heavy tailed {\color{\magenta} sticky} holding times on the boundary (see \eqref{occupMeasXhat} below)
\begin{align}
\textrm{ for all } x \in \overline{\Omega}, \quad  \mathbf{E}_x \left[ \int_0^\infty \mathbf{1}_\Lambda(\hat{X}_t) dt \right] = \infty \quad \textrm{only if} \quad \Lambda \cap \partial \Omega \neq \emptyset
\label{occupation2intro}
\end{align}
and (see \eqref{occupMeasXbar} below)
\begin{align}
\textrm{ for all } x \in \overline{\Omega}, \quad  \mathbf{E}_x \left[ \int_0^\infty \mathbf{1}_\Lambda(\bar{X}_t) dt \right]
= \infty \quad \textrm{only if} \quad \Lambda \cap \partial \Omega \neq \emptyset.
\label{occupation3intro}
\end{align}
Formula \eqref{occupation1intro} is well-known as a consequence of the delaying process $L$. Formulae \eqref{occupation2intro} and \eqref{occupation3intro} are obtained below as a by-product of our results.

\section{Preliminaries}
\label{Sec:NotPre}

For the reader's convenience we introduce here some definitions together with some known facts and provide some details that turn out to be useful further on.

\subsection{Positive Continuous Additive Functionals}

We recall some properties of this special class of functionals. Consider the set $\mathbf{A}^+_c$ of PCAFs and denote by $\mu_A$ the Revuz measure of $A \in \mathbf{A}^+_c$. We denote by $\mathbf{A}^+_c(\Lambda)$ the set of PCAFs associated to Revuz measures compactly supported in $\Lambda \subseteq \overline{\Omega}$. That is, 
\begin{align*}
\mathbf{A}^+_c(\Lambda)=\{A \in \mathbf{A}^+_c \,:\, supp[\mu_A] = \Lambda\}, \quad \Lambda \subseteq \overline{\Omega}.
\end{align*}

Let $Z=\{Z_t\}_{t\geq 0}$ be a process on $\overline{\Omega}$ with $\mathbf{P}_x(Z_s \in dy) = p_Z(s,x,y)\mu(dy)$ and $\mu$ with support on $\overline{\Omega}$. Denote by $\mathbf{E}_\mu$ the integral $\int \mathbf{E}_x \mu(dx)$ where, as usual, $\mathbf{E}_x$ is the mean value under $\mathbf{P}_x$ and $x=Z_0$ is a point in $\overline{\Omega}$. For simplicity we also introduce the set of Borel functions 
\begin{align*}
\mathcal{B}(\Lambda) = \{ \textrm{measurable functions on the Borel set $\Lambda\subseteq \overline{\Omega}$}\}.
\end{align*} 
If $A_t \in \mathbf{A}^+_c(\Lambda)$ is a PCAF of the process $Z$ on $\overline{\Omega}$, we have that, for $f \in \mathcal{B}(\Lambda)$,
\begin{align}
\label{limitRevuz}
\lim_{t \downarrow 0} \mathbf{E}_\mu \left[ \frac{1}{t} \int_0^t f(Z_s) dA_s \right] = \int_{\Lambda} f(x) \mu_A(dx)
\end{align}
and  
\begin{align}
\label{INTmuA}
\mathbf{E}_x \left[ \int_0^t f(Z_s)dA_s \right] = \int_0^t \int_\Lambda f(y) p_Z(s,x,y) \mu_A(dy) ds, \quad x \in \overline{\Omega}.
\end{align}
Further on we also use the fact that
\begin{align}
\label{limitRESadditive}
\lim_{t \downarrow 0} \mathbf{E}_\mu \left[ \frac{1}{t} \int_0^t f(Z_s) dA_s \right] = \lim_{\lambda \to \infty} \lambda \mathbf{E}_\mu \left[\int_0^\infty e^{-\lambda t} f(Z_t) dA_t \right]
\end{align}
and
\begin{align}
\label{potintdA}
\mathbf{E}_x \left[ \int_0^\infty e^{-\lambda t} f(Z_t)dA_t \right] = \lambda \int_0^\infty e^{-\lambda t} \mathbf{E}_x\left[ \int_0^t f(Z_s) dA_s \right] dt.
\end{align}
In particular, we deal with Lebesgue-Stieltjes integrals as $\int_0^t dA_s$ and $\int_0^t dA^{-1}_s$ where $A_t$ will be the boundary local time or the interior occupation time of a process on $\overline{\Omega}$ and $A^{-1}_t$ will be the inverse process
\begin{align*}
A^{-1}_t = \inf\{s\,:\, A_s>t\}, \quad 0\leq t < A_{\zeta_Z-} \quad \textrm{with} \quad  A^{-1}_t = 0, \quad t\geq A_{\zeta_Z -}
\end{align*}
where $\zeta_Z = \inf\{s\,:\, Z_s \in \partial \}$ is the lifetime of $Z$ and $\partial$ is a ''cemetery point''. 

We recall the following lemma.
\begin{lemma}
\label{lemmaBluGet}
(\cite[Lemma 2.2, Chapter V]{BluGet68}) Let $a(t)$ be a function from $[0, \infty)$ to $[0, \infty)$ which is non decreasing and right continuous and satisfies $a(0)=0$, $a(\infty)=\lim_{t\uparrow \infty} a(t)$. Define
\begin{align*}
a^{-1}(t) = \inf\{s\,:\, a(s)>t \}
\end{align*}
for $0\leq t < \infty$ where as usual we set $a^{-1}(t)=\infty$ if the set in braces is empty. The function $a^{-1}$ from $[0, \infty)$ to $[0, \infty)$ is called the inverse of $a$. It is right continuous and non decreasing. Define $a^{-1}(\infty) = \lim_{t \uparrow} a^{-1}(t)$. Then
\begin{align*}
a(t) = \inf\{s\,:\, a^{-1}(s) > t\}, \quad 0 \leq t < \infty
\end{align*}
and if $f$ is non negative Borel measurable function on $[0, \infty]$ vanishing at $\infty$ one has
\begin{align*}
\int_{(0, \infty)} f(t) da(t) = \int_0^\infty f(a^{-1}(t)) dt.
\end{align*}
\end{lemma}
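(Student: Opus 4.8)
The lemma is classical and could simply be invoked from \cite{BluGet68}; nevertheless I sketch the route I would take. The statement bundles three assertions: that $a^{-1}$ is again non-decreasing and right continuous, that inversion is involutive (so that $a$ is recovered as $\inf\{s : a^{-1}(s) > t\}$), and the change-of-variables identity for the Lebesgue--Stieltjes measure $da$. The single tool underlying all three is the order equivalence
\begin{align}
\label{galois}
a^{-1}(s) < t \iff a(t^-) > s, \qquad a(t^-) := \sup_{r < t} a(r),
\end{align}
which is immediate from the definition: $\inf\{r : a(r) > s\} < t$ holds exactly when some $r < t$ already satisfies $a(r) > s$, i.e. when $\sup_{r<t} a(r) > s$. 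I would prove the three assertions in the order listed, leaving the change of variables as the payload.

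Monotonicity of $a^{-1}$ is the inclusion $\{r : a(r) > s_2\} \subseteq \{r : a(r) > s_1\}$ for $s_1 < s_2$. For right continuity, fix $s$ and pick any $r_0$ with $a(r_0) > s$; then $r_0 \in \{r : a(r) > u\}$ for every $u \in (s, a(r_0))$, so $a^{-1}(u) \le r_0$ for such $u$ and hence $\lim_{u \downarrow s} a^{-1}(u) \le r_0$. Taking the infimum over admissible $r_0$ gives $a^{-1}(s+) \le a^{-1}(s)$, and the reverse inequality is monotonicity. For the involution I would apply the equivalence \eqref{galois} to the right-continuous non-decreasing function $b := a^{-1}$: a direct computation of $\inf\{s : b(s) > t\}$, splitting according to whether $t$ lies in a flat stretch of $a$ (where $a^{-1}$ jumps) or not, identifies this infimum with $a(t)$, which is exactly the displayed identity $a(t) = \inf\{s : a^{-1}(s) > t\}$.

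For the change-of-variables formula I would run the usual ``standard machine''. It suffices to verify the identity on indicators $f = \mathbf{1}_{[0,c)}$, since both sides are positive-linear in $f$ and continuous under monotone increasing limits, so linearity together with the monotone convergence theorem extends the identity to every non-negative Borel $f$ vanishing at infinity. For such an indicator the left side is $\int_{(0,c)} da = a(c^-) - a(0) = a(c^-)$, while by \eqref{galois} the right side is
\begin{align*}
\int_0^\infty \mathbf{1}\{a^{-1}(t) < c\}\, dt = \operatorname{Leb}\{t \ge 0 : t < a(c^-)\} = a(c^-),
\end{align*}
so the two agree.

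The main obstacle is not any single estimate but the bookkeeping at the exceptional points. Jumps of $a$ correspond to intervals of constancy of $a^{-1}$, and flat stretches of $a$ correspond to jumps of $a^{-1}$; it is precisely at these points that naive forms of the order equivalence break down, and one must fix every endpoint convention consistently. Working throughout with the strict-inequality form \eqref{galois} rather than its non-strict counterpart is what keeps the argument clean, because the discrepancies introduced at jumps and flats are confined to a countable, hence Lebesgue-null, set of $t$ and therefore do not affect the value of either integral; the hypothesis that $f$ vanishes at infinity then disposes of the behavior at the upper endpoint $a(\infty)$ (where $a^{-1}(t) = \infty$ and the integrand is annihilated).
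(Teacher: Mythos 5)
The paper does not prove this lemma at all: it is quoted verbatim as a known result, with the proof delegated entirely to the citation \cite[Lemma 2.2, Chapter V]{BluGet68}. Your sketch is therefore doing strictly more work than the paper, and it is essentially the standard argument one finds in that reference. The central equivalence $a^{-1}(s) < t \iff a(t^-) > s$ is correctly derived and is indeed the right tool: it gives monotonicity and right continuity of $a^{-1}$ cleanly, and it makes the indicator computation $\int_0^\infty \mathbf{1}\{a^{-1}(t)<c\}\,dt = \mathrm{Leb}\,[0,a(c^-)) = a(c^-) = \int_{(0,c)} da$ immediate. Two points are left at the level of assertion rather than proof. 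First, the involution $a(t)=\inf\{s : a^{-1}(s)>t\}$ is only claimed to follow from ``a direct computation splitting according to flat stretches''; this is true but is precisely where the endpoint bookkeeping you warn about actually bites, so a complete write-up would carry it out (e.g.\ by showing $a^{-1}(s)>t \iff s \ge a(t)$ using right continuity of $a$, whence the infimum is $a(t)$). Second, the monotone-class extension from indicators $\mathbf{1}_{[0,c)}$ to general non-negative Borel $f$ requires the usual $\sigma$-finiteness caveat, since $da$ may have infinite total mass when $a(\infty)=\infty$; restricting to $(0,N)$ and letting $N\uparrow\infty$ repairs this. Neither issue is a gap in the approach, only in the level of detail, and your handling of the upper endpoint via $f(\infty)=0$ is correct.
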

We also observe that in case $a$ is continuous, then
\begin{align*}
a^{-1}(t) = \max\{s\,:\, a(s)=t\}, \quad 0 \leq t < a(\infty)
\end{align*} 
and $a^{-1}(t)=\infty$ for $t> a(\infty)$. The inverse $a^{-1}$ is strictly increasing on $[0, a(\infty))$ and $a(a^{-1}(t)) = t$ for $0 \leq t < a(\infty)$.

\subsection{The process $X^+$}
\label{sec:secXref}

We recall that $X^+$ is the reflecting Brownian motion on $\overline{\Omega}$ for which, with abuse of notation, we respectively denote by
\begin{align}
\gamma^+_t = \int_0^t \mathbf{1}_{\partial \Omega} (X^+_s) ds, \quad t\geq 0 \quad \textrm{and} \quad \Gamma^+_t = \int_0^t \mathbf{1}_{\Omega} (X^+_s) ds, \quad t\geq 0
\label{defROUG}
\end{align}
the local time on the boundary $\partial \Omega$ and the occupation time of (the interior) $\Omega$. It is well known that by using Ito's formula, the local time can be obtained (in the $L^2$ sense for all $t\geq 0$ and a.s. uniformly on bounded interval of time) as
\begin{align*}
\gamma^+_t = \lim_{\epsilon \downarrow 0} \int_0^t \frac{\mathbf{1}_{\Lambda_\epsilon}(X^+_s)}{\epsilon} ds
\end{align*}
where $\Lambda_\epsilon = \{x \in \overline{\Omega}\,:\, d(x, \partial \Omega) \leq  \epsilon\}$. The representations in \eqref{defROUG} make sense in terms of the associated Revuz measures and $\gamma^+$, $\Gamma^+$ are PCAF for which
\begin{align}
\int_0^\infty \mathbf{1}_{\Omega}(X^+_t) d\gamma^+_t = 0, \quad \int_0^\infty \mathbf{1}_{\partial \Omega}(X^+_t) d\Gamma^+_t = 0.
\label{defgammaGammaRigorous}
\end{align} 
Let us consider 
\begin{align}
\tilde{m}(dy) = \mathbf{1}_\Omega dy + \mathbf{1}_{\partial \Omega} m_\partial(dy)
\label{mMeasureTilde}
\end{align}
in place of $m$ defined in \eqref{mMeasure}. Let $f \in \mathcal{B}(\overline{\Omega})$. We recall that $\gamma^+_t$ and $\Gamma^+_t$ are PCAFs for which
\begin{align}
\label{limitPCAFgamma}
\lim_{t \downarrow 0} \mathbf{E}_{\tilde{m}} \left[ \frac{1}{t} \int_0^t f(X^+_s) d\gamma^+_s \right] = \int_{\partial \Omega} f(x) m_\partial(dx)
\end{align}
and
\begin{align}
\label{limitPCAFGamma}
\lim_{t \downarrow 0} \mathbf{E}_{\tilde{m}} \left[ \frac{1}{t} \int_0^t f(X^+_s) d\Gamma^+_s \right]  = \int_\Omega f(x) dx.
\end{align}

Now we quote some results on the path integral representation of the solution to the boundary value elliptic problem
\begin{equation}
\label{prob31}
\left\lbrace
\begin{array}{ll}
\displaystyle \Delta g = c_1 g , & \textrm{ in } \Omega,\\
\displaystyle \partial_{\bf n} g + c_2 g = f , &  \textrm{ on } \partial \Omega,
\end{array}
\right.
\end{equation}
with $f \in \mathcal{B}(\partial \Omega)$.

\begin{lemma}
\label{lemmaPap}
(\cite{Pap90}) Let $c_1, c_2$ be two positive constants. 
\begin{itemize}
\item[i)] The gauge of \eqref{prob31}
\begin{align*}
\mathfrak{g}(x) = \mathbf{E}_x \left[ \int_0^\infty e^{-c_1 t - c_2 \gamma^+_t} d\gamma^+_t  \right]
\end{align*}
is continuous on $\overline{\Omega}$ if there exists $x_0 \in \overline{\Omega}$ such that $\mathfrak{g}(x_0)< \infty$.
\item[ii)] The unique (weak) solution to \eqref{prob31} has the probabilistic representation
\begin{align*}
g(x) = \mathbf{E}_x \left[ \int_0^\infty e^{-c_1 t - c_2 \gamma^+_t} f(X^+_t) d\gamma^+_t \right], \quad f \in \mathcal{B}(\partial \Omega) 
\end{align*}
if, for some $x_0 \in \overline{\Omega}$, $\mathfrak{g}(x_0)<\infty$.
\item[iii)] Given the Feynman-Kac semigroup on $L^2(\Omega)$
\begin{align*}
T_t f(x) = \mathbf{E}_x [e^{-c_1 t - c_2 \gamma^+_t} f(X^+_t)] = \int_{\Omega} f(y) \, k(t,x,y)\,  \tilde{m}(dy)
\end{align*}
we have that
\begin{align*}
g(x) = \int_{\partial \Omega} \left(\int_0^\infty k(t,x,y) dt \right) f(y) \tilde{m}(dy)
\end{align*}
where $\tilde{m}$ is defined in \eqref{mMeasureTilde}. Moreover, $k$ is a symmetric continuous kernel whose eigenfunctions are in $C(\overline{\Omega})$ and $T_t$ is a compact operator in $C(\overline{\Omega})$. $T_t$ is a Feller semigroup.
\end{itemize}
\end{lemma}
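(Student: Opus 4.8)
The plan is to treat part (ii) as the core statement, from which (iii) follows mechanically via the PCAF machinery of Section~\ref{Sec:NotPre}, and to handle (i) separately as a gauge-theorem argument. For (ii), the idea is to apply Itô's formula to the functional $Y_t = \exp(-c_1 t - c_2 \gamma^+_t)\, g(X^+_t)$, using the Skorokhod decomposition of the reflecting motion $X^+$, whose bounded-variation part consists of a Lebesgue-time drift and a boundary-local-time term directed along the outward normal. Since $X^+$ spends zero Lebesgue time on $\partial\Omega$ (so $\Gamma^+_t = t$ a.s.), the $dt$ contribution is $(\Delta g - c_1 g)(X^+_t)\,dt$ and vanishes in $\Omega$ by the interior equation $\Delta g = c_1 g$, while the local-time contribution collects into $-(\partial_{\bf n} g + c_2 g)(X^+_t)\, d\gamma^+_t = -f(X^+_t)\, d\gamma^+_t$ by the Robin boundary condition. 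Integrating and taking $\mathbf{E}_x$, the stochastic-integral part has zero mean (a true martingale once the gauge is finite), so
\begin{align*}
\mathbf{E}_x[Y_t] - g(x) = -\,\mathbf{E}_x\left[ \int_0^t e^{-c_1 s - c_2 \gamma^+_s} f(X^+_s)\, d\gamma^+_s \right].
\end{align*}
Letting $t \to \infty$ and using $\mathbf{E}_x[Y_t] \to 0$ (guaranteed by the exponential weight together with boundedness of $g$) yields the stated representation.

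Part (iii) then follows from (ii) by rewriting the boundary-local-time integral through the Revuz correspondence. Since $\gamma^+$ has Revuz measure $m_\partial$ supported on $\partial\Omega$ by \eqref{limitPCAFgamma}, identity \eqref{INTmuA} applied to the Feynman--Kac kernel $k$ gives
\begin{align*}
\mathbf{E}_x\left[ \int_0^t e^{-c_1 s - c_2 \gamma^+_s} f(X^+_s)\, d\gamma^+_s \right] = \int_0^t \int_{\partial \Omega} k(s,x,y) f(y)\, m(dy)\, ds,
\end{align*}
and letting $t \to \infty$ together with Tonelli produces the kernel form $g(x)=\int_{\partial\Omega}\big(\int_0^\infty k(t,x,y)\,dt\big) f(y)\,m(dy)$. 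The only points to check here are the measurability and interchange of integrals, both routine given the PCAF framework.

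Finally, (i) is the gauge statement, which is the representation of (ii) specialized to $f \equiv 1$, so that $\mathfrak{g}$ solves $\Delta \mathfrak{g} = c_1 \mathfrak{g}$ in $\Omega$ and $\partial_{\bf n}\mathfrak{g} + c_2 \mathfrak{g} = 1$ on $\partial\Omega$. I would show that finiteness propagates from a single point $x_0$ to all of $\overline{\Omega}$ by a Harnack/Khasminskii-type estimate: splitting the exponential functional over short time windows and applying the Markov property bounds $\mathfrak{g}(x)$ by $\mathfrak{g}(x_0)$ times a factor that is uniformly bounded on $\overline{\Omega}$, after which local boundedness and the continuity of $k$ upgrade to continuity of $\mathfrak{g}$.

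I expect this last step to be the main obstacle: the gauge continuity is the genuinely delicate potential-theoretic ingredient (a gauge theorem in the spirit of Chung--Zhao), and it is precisely what supplies the integrability needed to justify the martingale property and the convergence $\mathbf{E}_x[Y_t] \to 0$ used in (ii). By contrast, once (i) is in hand, parts (ii) and (iii) are essentially mechanical applications of Itô's formula and the Revuz correspondence.
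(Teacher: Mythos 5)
The paper does not prove this lemma at all: it is imported verbatim from \cite{Pap90}, so there is no in-paper argument to compare yours against. Your verification scheme is nevertheless correct and is essentially how such representations are established in \cite{Pap90}: It\^{o}'s formula applied to $Y_t=e^{-c_1 t-c_2\gamma^+_t}g(X^+_t)$ along the Skorokhod decomposition of $X^+$, with the $dt$-part killed by $\Delta g=c_1 g$ (since $X^+$ spends zero Lebesgue time on $\partial\Omega$) and the $d\gamma^+$-part reduced to $-f\,d\gamma^+$ by the Robin condition, followed by $t\to\infty$; part (iii) is then the Revuz identity \eqref{INTmuA} transported to the Feynman--Kac kernel $k$. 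Two remarks. First, you substantially overestimate the difficulty of (i): with both constants strictly positive the gauge is automatically finite and uniformly bounded, since by the change of variables of Lemma \ref{lemmaBluGet} one has $\mathfrak{g}(x)\leq \mathbf{E}_x\big[\int_0^\infty e^{-c_2\gamma^+_t}\,d\gamma^+_t\big]=\mathbf{E}_x\big[\int_0^{\gamma^+_\infty}e^{-c_2 u}\,du\big]\leq 1/c_2$; the Harnack/Khasminskii machinery you invoke is only needed in the setting of \cite{Pap90} where the zeroth-order coefficients may change sign, and here continuity of $\mathfrak{g}$ follows from much softer arguments (Feller property of the Feynman--Kac semigroup plus dominated convergence). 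Second, the justification you give for the martingale property is slightly off: finiteness of the gauge is not what makes the stochastic integral a true martingale; what one actually uses is boundedness of $\nabla g$ on $\overline{\Omega}$ (or a localization argument), after which the gauge bound lets you pass to the limit. Neither point is a genuine gap, but both should be stated correctly.
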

We note that Lemma \eqref{lemmaPap} deals with continuous (and bounded) functions on $\overline{\Omega}$ where $\Omega$ is a bounded domain (open, connected and non-empty
set) with $C^3$ boundary $\partial \Omega$.

\subsection{The process $X$}
\label{sec:secX}

We have already introduced the process $X$ with generator $(G, D(G))$, now we complete the presentation of $X$ by recalling some further details. Our discussion  mainly follows the well-known book \cite{BluGet68} and the pioneering work \cite{ItoMcK}, both of which deal with the case $d=1$. 

Let us consider the natural filtration $ \mathcal{F}_t = \sigma\{X_s, \, 0\leq s < t\}$ and a good function $f$ for which $\mathbf{E}[f(X_s)| \mathcal{F}_t] = \mathbf{E}[f(X_s)|X_t]$, $t\leq s$, and $\mathbf{E}_x [f(X_{t+s}) | \mathcal{F}_t] = \mathbf{E}_{X_t} [f(X_s)]$, $s,t>0$. Hereafter, we consider the local time and the occupation time
\begin{align}
\gamma_t = \int_0^t \mathbf{1}_{\partial \Omega} (X_s) ds, \quad t\geq 0 \quad \textrm{and} \quad \Gamma_t = \int_0^t \mathbf{1}_{\Omega} (X_s) ds, \quad t\geq 0
\label{defgammaGammaRigorous2}
\end{align}
for which we observe that the inverses $\gamma^{-1}_t = \inf\{s\geq 0\,:\, \gamma_s>t\}$ and $\Gamma^{-1}_t = \inf\{s\geq 0\;:\; \Gamma_s=t\}$ are such that $\gamma \circ \gamma^{-1}_t = t$ and $\Gamma \circ \Gamma^{-1}_t = t$ almost surely. The processes in \eqref{defgammaGammaRigorous2} are well-defined as non-decreasing processes. They only increase as $X$ respectively is on $\partial \Omega$ and $\Omega$.  The following facts play a crucial role.

We say that $X$ is an elastic process meaning that 
$$\mathbf{E}_x[f(X_t)] = \mathbf{E}_x[f(X_t), t < \zeta]$$ 
is written in terms of the multiplicative functional $M_t$ and the lifetime $\zeta$ giving the elastic kill. In particular, there exists an independent exponential random variable (with parameter $c/\eta$) for which
\begin{align}
\label{lawExpLT} 
\mathbf{P}(\zeta > t | X_t) = e^{-(c/\eta) \gamma_t}=: M_t.
\end{align}

We say that $X$ is a sticky process meaning that $\{t\,:\, X_t \in \partial \Omega\}$ is a set of positive Lebesgue measure giving {\color{\magenta} the sticky holding times discussed below in Section \ref{sec:comparisonXhatX}.}

We write
\begin{align*}
S_t f(x)=\mathbf{E}_x[f(X_t)], \quad t\geq 0,\; x \in \overline{\Omega}
\end{align*}
where $S_t$ is the semigroup generated by $(G, D(G))$ and provide arguments for the following representation. For all $t>0$,
\begin{align}
S_t f(x) = \int_{\overline{\Omega}} f(y) p(t,x,y)\, m(dy), \quad f \in L^2(\overline{\Omega}, m),\; x \in \overline{\Omega}
\label{repScontKernel}
\end{align}
with
\begin{align*}
L^2(\overline{\Omega}, m) = L^2(\Omega, dx) \oplus L^2(\partial \Omega, (\eta/\sigma)m_\partial(dx)).
\end{align*}
We recall the following facts:
\begin{itemize}
\item[{\bf S1:}] (see \cite{AMPR03}) The semigroup $S_t$ is a compact, positive $C_0$-semigroup on $C(\overline{\Omega})$;
\item[{\bf S2:}] (see \cite{ArendtSauter2022}) The $C_0$-semigroup $S_t$ on $L^2(\overline{\Omega},m)$ has a continuous kernel on $\overline{\Omega}$. That is, for all $t>0$ there exists $p(t, \cdot, \cdot): \overline{\Omega} \times \overline{\Omega} \to \mathbb{R}$ such that \eqref{repScontKernel} holds true.
\end{itemize} 
In particular, the $C_0$-semigroup $S_t$ on $C(\overline{\Omega})$ can be given as 
\begin{align*}
S_tf(x) = \int_\Omega f(y) p(t,x,y) dy + (\eta / \sigma) \int_{\partial \Omega} f(y) p(t,x,y) m_\partial (dy), \quad f \in C(\overline{\Omega}), \quad t>0,\; x \in \overline{\Omega}
\end{align*}
from which we can also write
\begin{align*}
S_t f = S_t f|_{\Omega} + S_t f|_{\partial \Omega}
\end{align*}
where by $f|_\Omega$ we mean the restriction of $f$ on $\Omega$ . In general, we use the notation $S_t f|_{K}$ meaning that $S_0 f|_{K} = f|_{K}$ and $f|_{K}(x) = f(x)$, $x \in K \subseteq \overline{\Omega}$. We also write 
\begin{align*}
\mathbf{P}_x(X_t \in dy) = p(t,x,y)m(dy)
\end{align*}
where $X_0=x \in \overline{\Omega}$ and $m(dy)$ has been given in \eqref{mMeasure}.

We denote by 
\begin{align*}
R_\lambda f(x) := \mathbf{E}_x \left[ \int_0^\infty e^{-\lambda t} f(X_t) dt \right] \quad \lambda>0, \quad x \in \overline{\Omega}
\end{align*}
the associated $\lambda$-potential and
\begin{center}
$(\mathcal{L}p)$ denotes the Laplace transform of $p$.
\end{center}
The previous arguments suggest that
\begin{align*}
R_\lambda f(x) = \int_{\overline{\Omega}} f(y) r_\lambda(x,y) m(dy)
\end{align*}
where, for all $\lambda >0$,  $r_\lambda(\cdot, \cdot) = (\mathcal{L} p)(\lambda, \cdot, \cdot)$ is a continuous kernel. From Proposition 4.3 and Proposition 4.4 in \cite{ArendtSauter2022} we know that there exist $c_R>0$ such that
\begin{align*}
\| R_\lambda f \|_{L^\infty(\overline{\Omega})}  \leq c_R\, \big( \| f \|_{L^p(\Omega)} + \| Tf \|_{L^q(\partial \Omega)} \big), \quad p>d/2,\; q > (d-1)
\end{align*}
and 
\begin{align*}
R_\lambda f \in C(\overline{\Omega})
\end{align*}
under suitable characterization of $f$. A key ingredient in the present paper is the continuity up to the boundary of $R_\lambda f$ and $\Delta R_\lambda f$. We refer to \cite{ArendtSauter2022} and the references therein for a detailed discussion including related previous results. For example (\cite{AMPR03}), 
\begin{align*}
\max\{ \|R_\lambda f\|_{L^\infty(\Omega)}, \| TR_\lambda f \|_{L^\infty(\partial \Omega)}\} \leq \frac{1}{\lambda} \max\{ \|f\|_{L^\infty(\Omega)}, \| Tf \|_{L^\infty(\partial \Omega)}\}, \quad \lambda>0.
\end{align*}
We also write $L^p(\overline{\Omega})$ in place of $L^p(\overline{\Omega},m)$.\\

We underline the fact that a strong Markov process can only leave a holding point by a jump. Indeed, a strong Markov process cannot stay at a point for a positive (Lebesgue) amount of time and then leave that point as a continuous motion. It can jump away or leave instantaneously by reflection. Thus, in view of the time changes of $X$ we will deal with, we refer to $X$ as a strong Markov process on $\Omega$. Evidently, $X$ is a Markov process on $\overline{\Omega}$.

\subsection{The time-changed process $X^L$}
\label{sec:secXL}

The random time $L$ is the inverse of the $\alpha$-stable subordinator $H$ defined by $L_t = \inf\{s\geq 0\,:\, H_s>t \}$ as already introduced in Section \ref{sec:intro} and for which 
\begin{align}
\mathbf{P}_0(L_t < s) = \mathbf{P}_0(t < H_s), \quad t,s>0.
\label{probLprobH}
\end{align}
We assume that $H_0=0=L_0$ and write accordingly $\mathbf{P}_0$ for the associated probability measure. Denote by $l$ and $h$ the corresponding probability densities for which
\begin{align*}
\mathbf{P}_0(H_t \in ds) = h(t,s)ds, \quad \mathbf{P}_0(L_t \in ds) = l(t,s)ds
\end{align*}
and 
\begin{align}
\label{LapHandL}
\int_0^\infty e^{-\xi s} h(t,s) ds = e^{-t \xi^\alpha}, \quad \int_0^\infty e^{-\lambda t} l(t,s)dt = \frac{\lambda^\alpha}{\lambda} e^{-s \lambda^\alpha}, \quad \xi, \lambda>0.
\end{align}
We recall that
\begin{align}
\label{LapML}
\int_0^\infty e^{-\xi s} l(t,s) ds = E_\alpha(- \xi t^\alpha) \quad \textrm{with} \quad \int_0^\infty e^{-\lambda t} E_\alpha (-\xi t^\alpha) dt = \frac{\lambda^{\alpha-1}}{\lambda^\alpha + \xi}, \quad \lambda>0
\end{align}
where the Mittag-Leffler function $E_\alpha$ is analytic and such that
\begin{align}
\label{MLbound}
| E_\alpha(-\mu t^\alpha)| \leq \frac{c_E}{1+ \mu t^\alpha}, \quad t\geq 0, \; \mu>0 \quad \textrm{for a constant } c_E>0 
\end{align}
(see \cite{Bingham71, Kra03}). We underline that $E_\alpha \notin L^1(0, \infty)$ for $\alpha \in (0,1)$ and $E_\alpha \in L^2(0, \infty)$ for $\alpha \in (1/2,1)$. Moreover,
\begin{align}
\label{eigML}
D^\alpha_t E_\alpha(-\mu t^\alpha) = - \mu E_\alpha(-\mu t^\alpha), \quad t>0,\; \mu >0, \quad E_\alpha(0)=1.
\end{align}
The relation \eqref{eigML} can be easily verified from \eqref{LapML} and the fact that $D^\alpha_t$ is a convolution with
\begin{align*}
\int_0^\infty e^{-\lambda t} D^\alpha_t E_\alpha\, dt = \left( \int_0^\infty e^{-\lambda t} \frac{t^{-\alpha}}{\Gamma(1-\alpha)} dt \right) \left(\int_0^\infty e^{-\lambda t} \frac{d E_\alpha}{dt} dt \right).
\end{align*}

We have introduced the process $X^L_t := X \circ L_t$ as the process $X$ delayed by $L$. The random time $L$ may have plateaux determining plateaux for the path of $X\circ L$. The process $L$ has been extensively investigated in relation with the FCPs on bounded and unbounded domains (see for example \cite{MNV09, Chen17}). From the Markov process $X$ on $\overline{\Omega}$ with generator $(G, D(G))$, we get 
\begin{align}
\label{potFCP}
\mathbf{E}_x\left[\int_0^\infty e^{-\lambda t} f(X \circ L_t) dt \right]
= & \int_{\overline{\Omega}} f(y) \int_0^\infty e^{-\lambda t} \int_0^\infty \mathbf{P}_x(X_s \in dy) \mathbf{P}(L_t \in ds)\, dt\notag \\
= & \int_{\overline{\Omega}} f(y)  \int_0^\infty \frac{\lambda^\alpha}{\lambda} e^{-\lambda^\alpha s}\, \mathbf{P}_x(X_s \in dy) \, ds \notag \\
= & \frac{\lambda^\alpha}{\lambda} \mathbf{E}_x \left[ \int_0^\infty e^{-\lambda^\alpha t} f(X_t) dt \right]
\end{align}
which is the $\lambda$-potential for the non-Markov process $X^L$ driven by the FCP 
\begin{align}
\label{NLIVP}
D^\alpha_t w = G w, \quad w_0 = f \in D(G).
\end{align}
Thus, $X^L$ can be considered to solve \eqref{probXL1} or \eqref{probXL} as we already discussed above. In particular, the unique solution to the NLIVP  \eqref{NLIVP} can be written as
\begin{align*}
w(t,x) = \mathbf{E}_x[f(X^L_t)] = \int_0^\infty S_s f(x)\, l(t,s)\,ds, \quad t\geq 0,\; x \in \overline{\Omega}
\end{align*}
where $S_t$ is the $C_0$-semigroup on $C(\overline{\Omega})$ generated by $(G,D(G))$. References on the NLIVP \eqref{NLIVP} have been given in Section \ref{nutshell}. We also refer to \cite{CapDovFEforms19} for a general discussion and a short review. As discussed in \cite{CapDovDelRus17}, the process $X^L$ can be considered as a delayed process with infinite occupation measure for every set $\Lambda \subset \overline{\Omega}$. Assume that \eqref{ASShtINTRO} holds true. As we can see from \eqref{potFCP}, for all $x \in \overline{\Omega}$,
\begin{align}
\lim_{\lambda \to 0} \mathbf{E}_x \left[ \int_0^\infty e^{-\lambda t}\mathbf{1}_\Lambda(X^L_t) dt \right] = \mathbf{E}_x \left[ \int_0^\infty \mathbf{1}_\Lambda(X^L_t) dt \right] = \infty \quad \Lambda \subset \overline{\Omega}.
\label{occupMeasXL}
\end{align}
We refer to \cite{NLBVP-ItheHline} for a discussion on the occupation measures of $X$ on $[0, \infty)$.

\section{The process $\hat{X}$}
\label{sec:hatX}

We {\color{\magenta} first} discuss the process $\hat{L}$ to be considered in the time-changed process $\hat{X} := X \circ \hat{L}$. The main difficulty in the characterization of this process relies on the fact that $\hat{L}$ depends on the base process $X$, that is the process serving as the base process for the time change leading to $\hat{X}$. The random time $\hat{L} = \hat{L}(X)$ of $X$ can be roughly described as the new clock such that
\begin{align}
\hat{L}_t(X) = \left\lbrace
\begin{array}{ll}
\displaystyle t, &  \textrm{if  } \hat{X}_t \in \Omega,\\
\\ 
\displaystyle L_t, & \textrm{if } \hat{X}_t \in \partial \Omega.
\end{array}
\right. \quad  0 \leq t < (\hat{\tau}^\mathsf{ex}_\Omega \vee \hat{\tau}^\mathsf{ex}_{\partial \Omega})
\label{def1L}
\end{align}
where $\hat{\tau}^\mathsf{ex}_\Lambda = \inf\{t\,:\, \hat{X}_t \notin \Lambda \}$ if $\hat{X}_0 \in \Lambda$ and $\hat{\tau}^\mathsf{ex}_\Lambda = 0$ if $\hat{X}_0 \in \Lambda^c$. Formula \eqref{def1L} gives a picture of our process. For example, we can write
\begin{align*}
\hat{X}_t = X_t \, \mathbf{1}_{\Omega} (\hat{X}_t) + X \circ L_t\, \mathbf{1}_{\partial \Omega} (\hat{X}_t), \quad 0 \leq t < (\hat{\tau}^\mathsf{ex}_\Omega \vee \hat{\tau}^\mathsf{ex}_{\partial \Omega})
\end{align*}
However, it does not allow a clear construction of $\hat{X}$ via time change. \begin{definition}
\label{defLhat}
Let $\hat{A}$ be a PCAF for the process $X \circ \hat{L}$ on $\overline{\Omega}$. The random time $\hat{L}=\hat{L}(X)$ is the time change such that, for $f \in \mathcal{B}(\overline{\Omega})$:
\begin{itemize}
\item[i)] $\hat{A} \in \mathbf{A}^+_c(\partial \Omega) $ implies that
\begin{align}
\label{defLhatbyA1}
\int_0^t f(X\circ \hat{L}_s) d\hat{A}_s = \int_0^t f(X \circ L_s)d\hat{A}_s, 
\end{align}
\item[ii)] $\hat{A} \in \mathbf{A}^+_c(\Omega)$ implies that
\begin{align}
\int_0^t f(X\circ \hat{L}_s) d\hat{A}_s = \int_0^t f(X_s)d\hat{A}_s.
\end{align}
\end{itemize}
\end{definition}

We proceed with the following further characterization of the process $\hat{L}$.  We introduce the function $\alpha: \overline{\Omega} \to (0,1]$ defined as
\begin{align*}
\alpha(y) = 
\left\lbrace 
\begin{array}{ll}
1, & y \in \Omega,\\
\\
\alpha \in (0,1), & y \in \partial \Omega.
\end{array}
\right . , \quad y \in \overline{\Omega}.
\end{align*}
Denote by $\hat{l}$ the density of $\hat{L}$ and write
\begin{align*}
\int_0^\infty \mathbf{P}_x(\hat{L}_t(y) \in ds,  X_s \in dy) = \mathbf{P}_x(X \circ \hat{L}_t \in dy)
\end{align*}
with
\begin{align*}
\int_\Omega \hat{l}(t,s,y)m(dy) = \delta(t-s) m(\Omega), \qquad \int_{\partial \Omega} \hat{l}(t,s,y) m(dy) = l(t,s) m(\partial \Omega)
\end{align*}
where $\delta$ is the Dirac delta function (recall that $L_t \to t$ almost surely as $\alpha \uparrow 1$) and
\begin{align*}
\int_0^\infty \hat{l}(t,s,y) ds= 1, \quad y \in \overline{\Omega},\; t>0.
\end{align*}

In particular, for $\Lambda \subset \overline{\Omega}$,
\begin{align*}
\mathbf{P}_x(X \circ \hat{L}_t \in \Lambda) 
= &  \int_\Lambda \int_0^\infty \mathbf{P}_x(\hat{L}_t(y) \in ds,  X_s \in dy)\\
= & \int_{\Lambda \cap \Omega} p(t,x,y)\,dy +  (\eta/\sigma) \int_{\Lambda \cap \partial \Omega}  \left( \int_0^\infty p(s,x,y) l(t,s) ds \right) m_\partial(dy). 
\end{align*}
From  \eqref{LapHandL}, with $\lambda>0$, $x,y \in \overline{\Omega}$, we get
\begin{align}
\label{LapHATp}
\int_0^\infty e^{-\lambda t} \mathbf{P}_x(\hat{L}_t(y) \in ds,  X_s \in dy) dt  =  \frac{\lambda^{\alpha(y)}}{\lambda} e^{-s \lambda^{\alpha(y)}}\, p(s, x,y)m(dy)\, ds.
\end{align}

The time change $\hat{L}$ can be also regarded as the multi-parameter process $\{\hat{L}_t(y), (t,y) \in (0, \infty) \times \overline{\Omega} \}$ by means of which we obtain the switching process $X \circ \hat{L}_t$. The law $\hat{l}$ of $\hat{L}$, for a given $y \in \overline{\Omega}$, solves the problem to find $\hat{l} \in C(W^{1, \infty}(0,\infty), (0, \infty))$ such that (consult \cite[Section 2]{Dov22} and \cite[Section 2]{ColDov} for general subordinator with symbol $\Phi$),
\begin{align*}
D^{\alpha(y)}_t \hat{l} = - \frac{\partial \hat{l}}{\partial x}, \quad \hat{l}(0,x) = f(x)
\end{align*} 
for a good function $f$, that is
\begin{align*}
\hat{l}(t,x) = \int_0^x f(x-s) \hat{l}(t,s;y)\, ds = \mathbf{E}_0 \left[f(x - \hat{L}_t(y)) \mathbf{1}_{(\hat{L}_t(y) < x)} \right].
\end{align*}
The case $\alpha(y)=1$ (for which we have the ordinary derivative) must be managed in view of the fact that $\hat{l}=\delta$ is the Dirac delta function. In case $\alpha(y) \in (0,1)$, we observe that $\hat{l}=l$ is a smooth function in both time and space variables. Moreover, $\hat{L}$ can be regarded as inverse of $\hat{H}$, that is a stable subordinator of order $\alpha(y)$.\\

An example of $\alpha(\cdot) : \overline{\Omega} \to (0,1]$ is given by
\begin{align*}
\alpha(y) = 1 - (1-\alpha) (1 - \mathbf{P}_y(\tau_{\partial \Omega} > 0)) \quad \textrm{as a function $y \to \{1, \alpha\}$ with $\alpha \in (0,1)$}  
\end{align*} 
under the Blumenthal’s 0-1 law and the prescription for $y$ to be a regular point. We remark that the dependence with $X$ can be realized in a very general way. For example,  $\alpha(t,y)$ can be written in terms of $\mathbf{P}_y(\gamma_t >0)$. In this case, due to the different scenario, we need to manage the theory discussed so far.\\


We introduce $\hat{X}$ in place of $X^L$ because of its Markov behaviour on the interior $\Omega$. The process $\hat{X}$ can be regarded as a switching process, that is a process changing its behaviour in a given region $\Lambda$ of the domain $\overline{\Omega}$. In general, we can consider $\alpha(\cdot)$ as the space-variable order of $\hat{L}$,
\begin{align*}
\alpha(y) = 
\left\lbrace
\begin{array}{ll}
1, & y \notin \Lambda\\
\alpha \in (0,1), & y \in \Lambda
\end{array}
\right.
, \quad \Lambda \subset \overline{\Omega}
\end{align*} 
to be associated with some subordinator, for example, according to \eqref{def1L},
\begin{align*}
\hat{L}_t (y) = t \mathbf{1}_{\Omega}(y) + L_t \mathbf{1}_{\partial \Omega}(y) \quad \textrm{and} \quad \hat{H}_t (y) = H_t \mathbf{1}_{\Omega}(y) + t \mathbf{1}_{\partial \Omega}(y)
\end{align*}
with $L=H^{-1}$ and $H$ stable subordinator of order $\alpha \in (0,1)$. Then,
\begin{align*}
\hat{L}_t(y) = L \circ \hat{H}_t(y), \quad y \in \overline{\Omega}, \quad 0 \leq t < (\hat{\tau}^\mathsf{ex}_\Omega \vee \hat{\tau}^\mathsf{ex}_{\partial \Omega})
\end{align*}
and $\hat{H}_t(y)$ can be associated with the order $\alpha/\alpha(y)$. We continue our discussion in case $\Lambda=\partial \Omega$. That is, the process $\hat{X}$ changes its behaviour on the boundary. For the process $\hat{X} := X \circ \hat{L}$, we respectively consider the local time and the occupation time
\begin{align*}
\hat{\gamma}_t = \int_0^t \mathbf{1}_{\partial \Omega} (\hat{X}_s) ds, \quad t\geq 0 \quad \textrm{and} \quad \hat{\Gamma}_t = \int_0^t \mathbf{1}_{\Omega} (\hat{X}_s) ds, \quad t\geq 0.
\end{align*}
We stress the fact that the process spends a positive (Lebesgue) amount of time on the boundary. Following the previous section, we write
\begin{align*}
\mathbf{P}_x(\hat{X}_t \in dy) = \mathbf{P}_x(X \circ \hat{L}_t \in dy) = \int_0^\infty \mathbf{P}_x ( X_s \in dy,  \hat{L}_t(y) \in ds) 
\end{align*}
and, for $f \in C(\overline{\Omega})$,
\begin{align}
\label{repS}
\mathbf{E}_x[f(\hat{X}_t)]
= & \int_{\overline{\Omega}} f(y) \hat{p}(t,x,y) m(dy) \\
= & \int_\Omega f(y) p(t,x,y) dy + (\eta/\sigma)\int_{\partial \Omega} f(y) \int_0^\infty l(t,s) p(s,x,y)ds\, m_\partial(dy) \notag \\
= & S_t f|_\Omega (x) + \int_0^\infty  S_s f|_{\partial \Omega} (x)\,l(t,s)\, ds, \quad x \in \overline{\Omega}\notag \\  
= & \mathbf{E}_x[f|_{\Omega}(\hat{X}_t)] + \mathbf{E}_x[f|_{\partial \Omega}(\hat{X}_t)], \notag
\end{align}
where $l(t,s)ds = \mathbf{P}_0(L_t \in ds)$ and $S_t$ is the semigroup generated by $(G, D(G))$. Further on, with \eqref{repS} at hand, we also write
\begin{align}
\label{wXhatLaw} 
\mathsf{w}(t,x) = \mathbf{E}_x[f(\hat{X}_t)] \quad \textrm{where} \quad \mathsf{w}=\mathsf{w}_f \quad \textrm{and} \quad  \mathsf{w}_f = \mathsf{w}_{f|_\Omega} + \mathsf{w}_{f|_{\partial \Omega}}
\end{align}
with the following meaning:  $\mathsf{w}_f=\mathsf{w}_f(t,x)$ is such that $\mathsf{w}_f(0,x) = f(x)$ and for $f \in C(\overline{\Omega})$, 
\begin{align}
\label{repf12}
f = f_1 + f_2 \quad \textrm{where} \quad  f_1 = f \mathbf{1}_\Omega \quad \textrm{and} \quad f_2 = f \mathbf{1}_{\partial \Omega}.
\end{align}
Notice that, both $l$ and $p$ are continuous functions and for all $t\geq 0$, $x \in \overline{\Omega}$,
\begin{align*}
\mathbf{E}_x[f_1(\hat{X}_t)] = S_t f|_{\Omega}(x) \quad \textrm{and} \quad \mathbf{E}_x[f_2(\hat{X}_t)] = \int_0^\infty  S_s f|_{\partial \Omega} (x)\,l(t,s)\, ds.
\end{align*} 
For $\alpha=1$, since $L_t=t$ almost surely, then 
\begin{align*}
\mathbf{E}_x[f(\hat{X}_t)] = \int_\Omega f(y) p(t,x,y) dy + (\eta/\sigma) \int_{\partial \Omega} f(y) p(t,x,y) m_\partial(dy) 
\end{align*} 
for $t>0$, $x \in \overline{\Omega}$ and $f \in C(\overline{\Omega})$. Indeed, $\hat{X}$ equals in law $X$ for $\alpha=1$. \\

Assume \eqref{ASShtINTRO} holds true. We also notice that, from \eqref{LapHATp}, for all $x \in \overline{\Omega}$, $\Lambda \subseteq \overline{\Omega}$,
\begin{align*}
\lim_{\lambda \to 0} \mathbf{E}_x \left[ \int_0^\infty e^{-\lambda t}\mathbf{1}_\Lambda(\hat{X}_t) dt \right] = \mathbf{E}_x \left[ \int_0^\infty \mathbf{1}_\Lambda(X_t) dt \right] < \infty \quad \textrm{only if} \quad \Lambda \cap \partial \Omega = \emptyset
\end{align*}
whereas, 
\begin{align}
\lim_{\lambda \to 0} \mathbf{E}_x \left[ \int_0^\infty e^{-\lambda t}\mathbf{1}_\Lambda(\hat{X}_t) dt \right] = \infty \quad \textrm{if} \quad \Lambda \cap \partial \Omega \neq \emptyset.
\label{occupMeasXhat}
\end{align}
Indeed, $\hat{X}$ on $\partial \Omega$ behaves like $X^L$ for which (see \eqref{occupMeasXL})
\begin{align*}
\mathbf{E}_x \left[ \int_0^\infty \mathbf{1}_{\Lambda\cap \partial \Omega}(X^L_t) dt \right] = \infty
\end{align*}
in case $\Lambda \cap \partial \Omega \neq \emptyset$.\\

{\color{\magenta} The process $\hat{X}$ inherits the behaviour of $X$. We say that $\hat{X}$ is a sticky process meaning that $\{t\,:\, \hat{X}_t \in \partial \Omega\}$ is a set of positive Lebesgue measure related to the sticky holding times $\{\hat{e}_i\}_i$ discussed in Section \ref{sec:comparisonXhatX} and the holding times discussed in Remark \ref{rmk:HThatX}. Indeed, due to the jumps of $H$,  
\begin{align*}
\mathbf{P}_x(\hat{X}_s = x, s \in [0,\mathfrak{T}))>0 \quad \textrm{if} \quad x \in \partial \Omega
\end{align*}
for some random time $\mathfrak{T}$. Thus, $\mathfrak{T}$ is an holding time.}

\section{Non-local PDEs}

The Caputo-D\v{z}rba\v{s}jan derivative of a function $\varrho: [0, \infty) \to \mathbb{R}$ turns out to be well-defined if, for $\varrho^\prime = \partial \varrho/\partial s$, we have $\varrho^\prime(s, \cdot) (t-s)^{-\alpha} \in L^1(0,t)$ for all $t \geq 0$. In a bounded interval, $D^\alpha_t \varrho$ is well defined for $\varrho(\cdot, x) \in AC(a,b)$ for all $x \in \overline{\Omega}$, $(a,b) \subset (0, \infty)$ where the set $AC(a,b)$ of the absolutely continuous functions on $(a,b)$ is the natural set of functions to be considered for the convolution operator $D^\alpha_t$. We recall that $AC(a,b) = \{\varphi \in C(a, b)\,:\, \varphi^\prime \in L^1(a,b)\}$ and observe that $AC(a,b)$ coincides with $W^{1,1}(a,b)$.  On the half line we may consider $W^{1,1}(0,\infty)$ which embeds into $L^\infty(0,\infty)$. For a function $\varrho : [0, \infty) \times \overline{\Omega} \to \mathbb{R}$, $D^\alpha_t \varrho$ is well defined if $\varrho(\cdot, x) \in W^{1,\infty} (0,\infty)$ for all $x \in \overline{\Omega}$ and this ensures existence of the Laplace transform
\begin{align}
\int_0^\infty e^{-\lambda t} D^\alpha_t \varrho(t,x)\, dt 
= & \left( \int_0^\infty e^{-\lambda t} \frac{t^{-\alpha}}{\Gamma(1-\alpha)}dt \right) \left(\int_0^\infty e^{-\lambda t} \frac{\partial \varrho}{\partial t}(t,x) \, dt \right)\notag \\ 
= & \frac{\lambda^\alpha}{\lambda} \big(\lambda (\mathcal{L}\varrho)(\lambda, x) - \varrho(0, x) \big), \quad \lambda>0.
\label{CD-LT}
\end{align}
Formula \eqref{CD-LT} is obtained from the fact that $D^\alpha_t$ is a convolution operator. Notice that we are not asking for $\varrho(\cdot, x) \in L^1(0, \infty)$ for some $x \in \overline{\Omega}$ in the spaces below.\\

We introduce the spaces 
\begin{align*}
\hat{D}_L = \bigg\{ & \varphi: (0, \infty) \times \overline{\Omega} \to \mathbb{R} \textrm{ with $\phi= T\varphi$ such that } \frac{\partial \varphi}{\partial t}  \in C((0, \infty) \times \overline{\Omega})     \\ 
 &  \textrm{ and } \Big| \frac{\partial \phi}{\partial t}(t,x) \Big| \leq \mathfrak{g}(x)\, t^{\alpha-1},\, t >0,\, x \in \partial \Omega,\; {\color{\magenta}\textrm{for some }} \mathfrak{g} \in L^\infty(\partial \Omega) \bigg\}
\end{align*}
and 
\begin{align*}
\bar{D}_L = \bigg\{ & \varphi: (0, \infty) \times \overline{\Omega} \to \mathbb{R} \textrm{ with } \phi= T\varphi \textrm{ such that } \frac{\partial \phi}{\partial t} \in C((0, \infty) \times \partial \Omega)\\ 
 & \textrm{ and } \ell: s \to \frac{\partial \phi}{\partial s}(s, x) (t-s)^{-\alpha}\, \textrm{ is in } L^1(0, t) \textrm{ for all } x \in \partial \Omega,\, t>0, \bigg\}.
\end{align*}

Concerning the problems \eqref{probXL} and \eqref{probXbar}, for $f \in C(\overline{\Omega})$, we address the problem to find a solution in $C((0, \infty) \times \overline{\Omega}) \cap \hat{D}_L$ associated with $\hat{X}$ and a solution in $C((0, \infty)\times \overline{\Omega}) \cap \bar{D}_L$ associated with $\bar{X}$. We observe that $u \in \bar{D}_L$ ensures existence of $D^\alpha_t u$ on the boundary $\partial \Omega$. On the other hand, for $u \in \hat{D}_L$ we get 
\begin{align*}
\big| D^\alpha_t Tu \big| \leq  \frac{1}{\Gamma(1-\alpha)} \int_0^t |\mathfrak{g} (x)| s^{\alpha - 1} (t-s)^{-\alpha} ds \leq \Gamma(\alpha)\, \| \mathfrak{g}\|_{L^\infty (\partial \Omega)} .
\end{align*} 
The process $X$ with generator $(G, D(G))$ is driven by \eqref{probXbar} in case $\alpha=1$. For $\alpha \in (0,1)$, we do not have a semigroup on $C(\overline{\Omega})$ and the associated process in not a Markov process on $\overline{\Omega}$. \\

\subsection{Main results on $\hat{X}$: the associated NLIVP}
\label{Sec:MainXhat}

Here we obtain the characterization in terms of a non-local PDE of the time-changed process introduced above. We first obtain the following preliminary results.
\begin{lemma}
\label{lemmaequivPCAFhatNOhat}
For $f \in \mathcal{B}(\overline{\Omega})$, $\lambda>0$, $x \in \overline{\Omega}$:
\begin{itemize}
\item[i)]
\begin{align*}
\mathbf{E}_x\left[\int_0^\infty e^{-\lambda t} f(\hat{X}_t) d\hat{\gamma}_t \right] = \frac{\lambda^\alpha}{\lambda} \mathbf{E}_x \left[\int_0^\infty e^{-\lambda^\alpha t} f(X_t) d\gamma_t \right]
\end{align*}
\item[ii)]
\end{itemize} 
\begin{align*}
\mathbf{E}_x\left[\int_0^\infty e^{-\lambda t} f(\hat{X}_t) d\hat{\Gamma}_t \right] = \mathbf{E}_x \left[\int_0^\infty e^{-\lambda t} f(X_t) d\Gamma_t \right]
\end{align*}
\end{lemma}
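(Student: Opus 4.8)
The plan is to reduce both identities to one direct computation on the one-time marginal of $\hat X$, thereby sidestepping the fact that $\hat X$ is not Markov. The key observation is that $\hat\gamma_t=\int_0^t\mathbf 1_{\partial\Omega}(\hat X_s)\,ds$ and $\hat\Gamma_t=\int_0^t\mathbf 1_{\Omega}(\hat X_s)\,ds$ are honest Lebesgue occupation integrals, so that $d\hat\gamma_s=\mathbf 1_{\partial\Omega}(\hat X_s)\,ds$ and $d\hat\Gamma_s=\mathbf 1_{\Omega}(\hat X_s)\,ds$. Writing $\hat A$ for either functional and $K\in\{\partial\Omega,\Omega\}$ for the associated set, I would first note that
\[
\mathbf E_x\Big[\int_0^\infty e^{-\lambda t}f(\hat X_t)\,d\hat A_t\Big]=\mathbf E_x\Big[\int_0^\infty e^{-\lambda t}(f\mathbf 1_K)(\hat X_t)\,dt\Big]=\int_0^\infty e^{-\lambda t}\int_K f(y)\,\hat p(t,x,y)\,m(dy)\,dt,
\]
where the last step is Tonelli (nonnegative integrands for $f\ge0$, the general case by linearity) together with the marginal law $\mathbf P_x(\hat X_t\in dy)=\hat p(t,x,y)\,m(dy)$. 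Everything then rests on inserting the explicit $\hat p$ from \eqref{repS} and comparing Laplace transforms in $t$.

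For part (i) I would take $K=\partial\Omega$, where \eqref{repS} gives $\hat p(t,x,y)=\int_0^\infty l(t,r)\,p(r,x,y)\,dr$. Substituting, exchanging the order of integration, and evaluating the inner time integral by \eqref{LapHandL}, namely $\int_0^\infty e^{-\lambda t}l(t,r)\,dt=\tfrac{\lambda^\alpha}{\lambda}e^{-r\lambda^\alpha}$, yields
\[
\mathbf E_x\Big[\int_0^\infty e^{-\lambda t}f(\hat X_t)\,d\hat\gamma_t\Big]=\frac{\lambda^\alpha}{\lambda}\int_{\partial\Omega}f(y)\int_0^\infty e^{-\lambda^\alpha r}\,p(r,x,y)\,dr\,m_\partial(dy).
\]
Applying the same occupation reduction to the base process $X$, but at the subordinated rate $\lambda^\alpha$ in place of $\lambda$ and recalling that $\gamma$ carries Revuz measure $m_\partial$, identifies the right-hand side of (i) with precisely this expression, closing the case. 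For part (ii) I would take $K=\Omega$; here \eqref{repS} carries no time change on the interior, $\hat p(t,x,y)=p(t,x,y)$ for $y\in\Omega$ (the $\alpha(y)=1$, $l(t,\cdot)\to\delta$ regime), so no Mittag-Leffler factor is produced and the potential collapses to $\int_0^\infty e^{-\lambda t}\int_\Omega f(y)\,p(t,x,y)\,dy\,dt$; the identical reduction applied to $X$ against $d\Gamma_t=\mathbf 1_\Omega(X_t)\,dt$ returns the same quantity, which is the right-hand side of (ii).

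I expect the main obstacle to be conceptual rather than computational. Because $\hat X$ is not Markov, one cannot directly invoke the Revuz correspondence \eqref{INTmuA}--\eqref{potintdA} in its usual form, and the delicate point is to argue convincingly that the functionals at hand depend only on the one-dimensional distribution of $\hat X_t$, so that the marginal representation \eqref{repS} genuinely suffices and the non-Markov difficulty never enters. Once that is granted, the Tonelli/Fubini interchanges are routine (justified by nonnegativity of the integrands), and the analytic heart reduces to the single stable-subordinator Laplace identity \eqref{LapHandL}, which is exactly what manufactures the subordination factor $\lambda^\alpha/\lambda$ on the boundary in (i) while leaving the interior dynamics in (ii) unchanged.
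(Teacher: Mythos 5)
Your proof is correct and follows essentially the same route as the paper's: both reduce the potential of the additive-functional integral to the one-time marginal $\hat{p}(\lambda,x,y)=\frac{\lambda^{\alpha(y)}}{\lambda}p(\lambda^{\alpha(y)},x,y)$ and then split according to whether the supporting set is $\partial\Omega$ or $\Omega$, with the single stable Laplace identity \eqref{LapHandL} producing the factor $\lambda^{\alpha}/\lambda$ on the boundary. The one place you genuinely diverge is the justification of the first step: the paper invokes the Revuz formulas \eqref{INTmuA} and \eqref{potintdA} directly for a PCAF of $\hat{X}$, whereas you exploit the fact that $\hat{\gamma}$ and $\hat{\Gamma}$ are absolutely continuous occupation integrals, so $d\hat{\gamma}_t=\mathbf{1}_{\partial\Omega}(\hat{X}_t)\,dt$ and the reduction to marginals is plain Tonelli; this is more elementary and explicitly addresses the point you rightly flag, namely that the Revuz correspondence is stated for Markov processes while $\hat{X}$ is not. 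For these particular functionals the two justifications coincide, so nothing is lost either way, but your version makes transparent why the non-Markov character of $\hat{X}$ never enters.
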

\begin{proof}
For a PCAF $\hat{A}$ of $\hat{X}$ with $supp[\mu_{\hat{A}}] = \Lambda \subset \overline{\Omega}$, according to \eqref{INTmuA} and \eqref{potintdA}, we obtain
\begin{align}
\label{AhatMeasure}
\mathbf{E}_x\left[\int_0^t f(\hat{X}_s) d\hat{A}_s \right] = \int_0^t \int_\Lambda f(y) \hat{p}(s,x,y) \mu_{\hat{A}}(dy) ds
\end{align}
and
\begin{align}
\label{AhayMeasureLap}
\mathbf{E}_x\left[ \int_0^\infty e^{-\lambda t} f(\hat{X}_t) d\hat{A}_t \right] = \int_\Lambda f(y) (\mathcal{L}\hat{p})(\lambda, x,y) \mu_{\hat{A}}(dy), \quad \lambda>0
\end{align}
where, from \eqref{LapHATp},
\begin{align*}
(\mathcal{L}\hat{p})(\lambda, x, y) 
& :=  \int_0^\infty e^{-\lambda t} \hat{p}(t,x,y) dt = \int_0^\infty \frac{\lambda^{\alpha(y)}}{\lambda} e^{-s\lambda^{\alpha(y)}} p(s,x,y) ds \\
& =: \frac{\lambda^{\alpha(y)}}{\lambda} (\mathcal{L}p)(\lambda^{\alpha(y)},x,y).
\end{align*}
Recall that $p(t,x,y)$ is the transition kernel of $X$ on $\overline{\Omega}$. 
Then,
\begin{align}
\label{INTdAtmp}
\mathbf{E}_x\left[\int_0^\infty e^{-\lambda t} f(\hat{X}_t) d\hat{A}_t \right] = 
\left\lbrace
\begin{array}{ll}
\displaystyle \frac{\lambda^\alpha}{\lambda} \int_\Lambda f(y) (\mathcal{L}p)(\lambda^\alpha, x,y) \mu_{\hat{A}}(dy), & \Lambda \subseteq \partial \Omega,\\ 
\\
\displaystyle \int_\Lambda f(y) (\mathcal{L}p)(\lambda, x,y) \mu_{\hat{A}}(dy), & \Lambda \subseteq \Omega.
\end{array}
\right.
\end{align}
From \eqref{INTdAtmp} and \eqref{AhayMeasureLap} we get the claim.
\end{proof}

Let us introduce the set of functions
\begin{align*}
\mathcal{K}(\Lambda) = \{ f: \overline{\Omega} \to \mathbb{R}\; \textrm{s.t.} \; f|_{\Lambda^c} =0,\, f|_{\Lambda} \in C(\Lambda)\}, \quad \Lambda \subseteq \overline{\Omega}.
\end{align*}
We observe that $\mathcal{K}(\Lambda)$ is not a subset of $C(\overline{\Omega}, \mathbb{R})$.

\begin{theorem}
Let us consider \eqref{wXhatLaw} and define $\tilde{\mathsf{w}}=\tilde{\mathsf{w}}(\lambda, x)$ as
\begin{align*}
\tilde{\mathsf{w}} (\lambda, x) = \int_0^\infty e^{-\lambda t} \mathsf{w}(t,x) dt, \quad \lambda >0, \quad x \in \overline{\Omega}.
\end{align*}
Then:
\begin{itemize}
\item[i)] For $f \in \mathcal{K}(\Omega)$, $\tilde{\mathsf{w}}$ is continuous on $\overline{\Omega}$ with representation
\begin{align*}
\tilde{\mathsf{w}}(\lambda,x) = \mathbf{E}_x\left[ \int_0^\infty e^{-\lambda t} f(\hat{X}_t) d\hat{\Gamma}_t \right]
\end{align*}
and satisfies
\begin{equation}
\label{pr1wHatLaw}
\left\lbrace
\begin{array}{ll}
\displaystyle \lambda \tilde{\mathsf{w}} - f = \Delta \tilde{\mathsf{w}}, &  {\color{\magenta} \textrm{on }} \Omega,\\
\\
\displaystyle \eta \lambda \tilde{\mathsf{w}} = - \sigma \partial_{\bf n} \tilde{\mathsf{w}} - c \tilde{\mathsf{w}}, & {\color{\magenta} \textrm{on }} \partial \Omega.
\end{array}
\right.
\end{equation}
\item[ii)] For $f \in \mathcal{K}(\partial \Omega)$, $\tilde{\mathsf{w}}$ is continuous on $\overline{\Omega}$ with representation
\begin{align*}
\tilde{\mathsf{w}}(\lambda, x) = \mathbf{E}_x\left[ \int_0^\infty e^{-\lambda t} f(\hat{X}_t) d\hat{\gamma}_t \right]
\end{align*}
and satisfies
\begin{equation}
\label{pr2wHatLaw}
\left\lbrace
\begin{array}{ll}
\displaystyle \lambda^\alpha \tilde{\mathsf{w}}  = \Delta \tilde{\mathsf{w}}, & {\color{\magenta} \textrm{on }} \Omega,\\
\\
\displaystyle \eta \frac{\lambda^\alpha}{\lambda} \big( \lambda \tilde{\mathsf{w}} - f \big) = - \sigma \partial_{\bf n}\tilde{\mathsf{w}} - c \tilde{\mathsf{w}}, & {\color{\magenta} \textrm{on }} \partial\Omega.
\end{array}
\right.
\end{equation}
\end{itemize}
\label{thm:alternativPap}
\end{theorem}

\begin{proof}
From Section \ref{sec:secX}, 
\begin{align*}
\int_{\overline{\Omega}} f(y) (\mathcal{L}p)(\lambda,x,y)m(dx) = R_\lambda f(x) \quad \textrm{satisfies} \quad \lambda R_\lambda f - f = \Delta R_\lambda f
\end{align*}
and $R_\lambda f \in C(\overline{\Omega})$ for $f \in L^p(\Omega)$ with $p> d/2$ and $f \in L^q(\partial \Omega)$ with $q>d-1$ which is the case here. In particular, $\sigma \partial_{\bf n} R_\lambda f + c (R_\lambda f)|_{\partial \Omega} \in C(\partial \Omega)$. 
From \eqref{wXhatLaw} and \eqref{repf12} write $\mathsf{w}_i=\mathsf{w}_{f_i}$ and
\begin{align*}
\tilde{\mathsf{w}}_i (\lambda, x) = \int_0^\infty e^{-\lambda t} \mathsf{w}_i(t,x) dt, \quad \lambda >0, \quad x \in \overline{\Omega}, \quad i=1,2
\end{align*}
where $f_1 \in \mathcal{K}(\Omega)$ and $f_2 \in \mathcal{K}(\partial \Omega)$. Thus, according to \eqref{repf12},
\begin{align}
\tilde{\mathsf{w}}_1 (\lambda, x) + \tilde{\mathsf{w}}_2(\lambda, x) = \tilde{\mathsf{w}}(\lambda, x) := \mathbf{E}_x \left[ \int_0^\infty e^{-\lambda t} f(\hat{X}_t) dt \right], \quad f_1 + f_2 = f.
\label{Rhat}
\end{align}
From Lemma \ref{lemmaequivPCAFhatNOhat}, for $\alpha \in (0,1]$,
\begin{align}
\tilde{\mathsf{w}}_1 (\lambda, x) = R_\lambda f_1 (x), \quad \tilde{\mathsf{w}}_2 (\lambda, x) = \frac{\lambda^\alpha}{\lambda} R_{\lambda^\alpha} f_2 (x), \quad \lambda >0, \quad x \in \overline{\Omega}.
\label{wRsolves}
\end{align}
Then, $\tilde{\mathsf{w}}_1$ and $\tilde{\mathsf{w}}_2$ inherit the regularity of $R_\lambda$.\\

From \eqref{repS} and $ii)$ of Lemma \ref{lemmaequivPCAFhatNOhat}, we write
\begin{align*}
\tilde{\mathsf{w}}_1(\lambda, x) = \int_\Omega f_1(y) (\mathcal{L}p)(\lambda ,x,y) \, dy = \mathbf{E}_x\left[ \int_0^\infty e^{-\lambda t} f_1(\hat{X}_t) d\hat{\Gamma}_t \right], \quad \lambda >0,\, x \in \overline{\Omega}
\end{align*}
which solves (from \eqref{wRsolves}) 
\begin{equation*}
\left\lbrace
\begin{array}{ll}
\displaystyle \lambda \tilde{\mathsf{w}}_1 - f_1 = \Delta \tilde{\mathsf{w}}_1, & \Omega,\\
\\
\displaystyle \eta (\lambda \tilde{\mathsf{w}}_1 - f_1) = - \sigma \partial_{\bf n} \tilde{\mathsf{w}}_1 - c \tilde{\mathsf{w}}_1, & \partial \Omega.
\end{array}
\right.
\end{equation*}
Since $f_1=0$ on $\partial \Omega$ and $f_1 =f|_{\Omega}$ on $\Omega$, we proved the point \textit{i)}.\\ 

From \eqref{repS} and $i)$ of Lemma \ref{lemmaequivPCAFhatNOhat} we get
\begin{align}
\tilde{\mathsf{w}}_2 (\lambda, x)  
= & (\eta/\sigma) \int_{\partial \Omega} f_2(y) \int_0^\infty \int_0^\infty e^{-\lambda t} l(t,z) dt\, p(z,x,y)dz\, m_\partial(dy)\notag \\
= & (\eta/\sigma) \frac{\lambda^\alpha}{\lambda} \int_{\partial \Omega} f_2(y) (\mathcal{L}p)(\lambda^\alpha ,x,y)\, m_\partial(dy) \label{wsfProof}\\
= & \frac{\lambda^\alpha}{\lambda} \mathbf{E}_x \left[\int_0^\infty e^{-\lambda^\alpha t} f_2(X_t) d\gamma_t \right], \quad t>0,\, x \in \overline{\Omega} \notag\\
= & \mathbf{E}_x\left[ \int_0^\infty e^{-\lambda t} f_2(\hat{X}_t) d\hat{\gamma}_t \right], \quad \lambda >0,\, x \in \overline{\Omega}. \notag
\end{align}
We proceed by considering the continuous function $R_\lambda f_2$. Since
\begin{equation*}
\left\lbrace
\begin{array}{ll}
\displaystyle \big( \lambda^\alpha R_{\lambda^\alpha} f_2 - f_2 \big) = \Delta  R_{\lambda^\alpha} f_2, & {\color{\magenta} \textrm{on }} \Omega,\\
\\
\displaystyle \eta (\lambda^\alpha R_{\lambda^\alpha} f_2 - f_2) = -\sigma \partial_{\bf n} R_{\lambda^\alpha} f_2 - c R_{\lambda^\alpha} f_2, & {\color{\magenta} \textrm{on }}  \partial\Omega
\end{array}
\right.
\end{equation*}
then by multiplying both sides for $\lambda^{\alpha}/\lambda$ and recalling \eqref{wRsolves}, we get
\begin{equation*}
\left\lbrace
\begin{array}{ll}
\displaystyle \frac{\lambda^\alpha}{\lambda} \big( \lambda \tilde{\mathsf{w}}_2 - f_2 \big) = \Delta \tilde{\mathsf{w}}_2, & {\color{\magenta} \textrm{on }} \Omega,\\
\\
\displaystyle \eta \frac{\lambda^\alpha}{\lambda} \big( \lambda \tilde{\mathsf{w}}_2 - f_2 \big) = - \sigma \partial_{\bf n}\tilde{\mathsf{w}}_2 - c \tilde{\mathsf{w}}_2, & {\color{\magenta} \textrm{on }} \partial\Omega
\end{array}
\right.
\end{equation*}
where $f_2=0$ on $\Omega$ and $f_2 = f|_{\partial \Omega}$ on $\partial \Omega$. Thus, we proved the point \textit{ii)}.\\

We close this proof by observing that \eqref{pr2wHatLaw} can be also obtained  from Lemma \ref{lemmaPap}. Moreover, we recognize \eqref{potFCP} in \eqref{wsfProof}, this can be considered together with \eqref{CD-LT} and the Laplace transform of \eqref{NLIVP} to obtain \eqref{pr2wHatLaw}. We will come back to this point in the proof of the next theorem.

\end{proof}

Now we focus on 
\begin{align*}
\mathsf{w}_{f|_{\partial \Omega}}(t, x) = \int_{\partial \Omega} \hat{p}(t,x,y) f(y) m(dy), \quad t\geq 0,\; x \in \overline{\Omega}
\end{align*}
and study its compact representation on $L^2(\overline{\Omega}, m)$. Recall that 
\begin{align*}
\tilde{\mathsf{w}}_{f|_{\partial \Omega}}(\lambda, x) = \mathbf{E}_x\left[\int_0^\infty e^{-\lambda t} f(\hat{X}_t) d\hat{\gamma}_t \right], \quad \lambda > 0, \; x \in \overline{\Omega}.
\end{align*}

Here we ask for $f|_{\partial \Omega} \in C(\partial \Omega)$ with $f=g\mathbf{1}_{\partial \Omega}$ and $g \in C(\overline{\Omega})$. In particular, we consider
\begin{align*}
\mathcal{F}(\Lambda) = \{f = g \mathbf{1}_{\Lambda}\, :\,  g \in D(G)\}
\end{align*} 
and observe that $\mathcal{F}(\Lambda) \subset \mathcal{K}(\Lambda)$ defined above. For $f \in \mathcal{F}(\partial \Omega)$, in order to underline the dependence on $\alpha \in (0,1)$, we write
\begin{align*}
R^\alpha_\lambda f(x) = \mathbf{E}_x\left[\int_0^\infty e^{-\lambda t} f(\hat{X}_t) d\hat{\gamma}_t \right], \quad \lambda>0,\; x \in \overline{\Omega}
\end{align*}
and (from formula \eqref{LapHandL}) we recall that
\begin{align*}
\lambda R^\alpha_\lambda f = \lambda^\alpha \int_0^\infty e^{- s \lambda^\alpha} S_s f\, ds = \lambda^\alpha R_{\lambda^\alpha} f, \quad \lambda >0
\end{align*}
where $S_t$ and $R_\lambda$ has been introduced in Section \ref{sec:secX}.

\begin{theorem}
\label{lemmaWrond}
The following hold true:
\begin{itemize}
\item[i)] $\mathsf{w}_{f|_{\partial \Omega}}(t,x)$ is continuous and bounded on $(0, \infty) \times \overline{\Omega}$;
\item[]
\item[ii)] $\mathsf{w}_{f|_{\partial \Omega}}(t,x)$ is the unique solution to 
\begin{align}
\label{probFracOmega}
\mathsf{w} \in C((0,\infty) \times \overline{\Omega}),  \quad D^\alpha_t \mathsf{w} = G \mathsf{w} \;\; \textrm{in } \overline{\Omega} \quad \textrm{with} \quad \mathsf{w}_0 = f \in \mathcal{F}(\partial \Omega);
\end{align}
\item[iii)] $\mathsf{w}_{f|_{\partial \Omega}}(t,x)$ has a compact representation for all $t\geq 0$ on $L^2(\overline{\Omega})$; 
\item[]
\item[iv)] $\mathsf{w}_{f|_{\partial \Omega}}(t,x)$ has the probabilistic representation $\mathbf{E}_x[f|_{\partial \Omega}(\hat{X}_t)]$ for $t\geq 0, \; x \in \overline{\Omega}$.
\end{itemize}
\end{theorem}

\begin{proof}
{\color{\magenta} We proceed point by point.} \\

\textit{i)} Notice that for all $\lambda > 0$, $\tilde{\mathsf{w}}_{f|_{\partial \Omega}}(\lambda, \cdot) = \tilde{\mathsf{w}}_2(\lambda, \cdot)$ which is continuous (and bounded) on $\overline{\Omega}$ as a consequence of the previous theorem and the point $i)$ of Lemma \ref{lemmaPap}. From Lemma \ref{lemmaPap}, we also conclude that $e^{-\lambda t} \hat{p}(t,x,y)$ is continuous on $\overline{\Omega} \times \overline{\Omega}$ for all $t> 0$. Thus, $\mathsf{w}_{f|_{\partial \Omega}}$ is continuous on $(0, \infty)\times \overline{\Omega}$. This proves \textit{i)}. \\

\textit{ii)} 
We have already proved that $R^\alpha_\lambda f$ solves \eqref{pr2wHatLaw} {\color{\magenta} for all $\lambda>0$}. Thus, the fact that $\mathsf{w}_{f|_{\partial \Omega}}$ solves the problem \eqref{probFracOmega} comes directly from Theorem \ref{thm:alternativPap} by observing that \eqref{CD-LT} brings \eqref{probFracOmega} into \eqref{pr2wHatLaw}. {\color{\magenta} Since the inverse Laplace transform $\mathsf{w}_{f|_{\partial \Omega}}$ of $R^\alpha_\lambda f$ is continuous on $(0,\infty)$, the injectivity of the Laplace transform ensures a unique correspondence between the solutions of the elliptic and parabolic problems.} This proves existence. Uniqueness follows from energy method. Consider two solutions $\varpi_1, \varpi_2$ of \eqref{probFracOmega} with initial datum $f$. Then, $\varpi^* := \varpi_1 - \varpi_2$ is a solution with zero initial datum. Our aim is to show that  {\color{\magenta}
\begin{align*}
D^\alpha_t \| \varpi^*(t, \cdot) \|^2_{L^2(\overline{\Omega})} \leq  \int_{\overline{\Omega}} 2 \varpi^* G \varpi^*\, m(dx) \leq 0.
\end{align*} }
Recall that \eqref{probFracOmega} can be written as \eqref{probXL1} and observe that $m_\partial$ is the restriction to $\partial \Omega$ of the Hausdorff measure $\mathcal{H}^{d-1}$  with $\mathcal{H}^{d-1}(\partial\Omega)< \infty$ and therefore, for every $u \in H^1(\Omega)$ there exists a unique $u_\partial \in L^2(\partial \Omega)$ such that $Tu = u_\partial$. Indeed, $C(\overline{\Omega}) \cap H^1(\Omega)$ is dense in $H^1(\Omega)$. In case $T\varpi_1 = T\varpi_2$ we have $\varpi^* =0$ on $\partial \Omega$. This is to say, for example, that the problem on the boundary $D^\alpha_t \mathfrak{w} = B w$ with $\mathfrak{w}_0=0$ has the unique solution $\mathfrak{w}=T \varpi^* =0$. For solutions with the same boundary trace we get
\begin{align*}
\int_{\overline{\Omega}} 2 \varpi^* G \varpi^*\, m(dx) = \int_{\Omega} 2 \varpi^* G \varpi^*\, dx = - 2 \int_\Omega |\nabla \varpi^* |^2 dx \leq 0.
\end{align*}
In case we do not ask for $T\varpi_1 = T \varpi_2$ (for example $\mathfrak{w}$ could be a non-zero solution), then
\begin{align*}
& \int_{\overline{\Omega}} 2 \varpi^* G \varpi^*\, m(dx) \\
= & - 2 \int_{\Omega} \nabla \varpi^* \nabla \varpi^* dx + 2 \int_{\partial \Omega} \varpi^* \partial_{\bf n} \varpi^* m_\partial(dx)  + (\eta/\sigma) \int_{\partial \Omega} 2 \varpi^* G \varpi^* m_\partial(dx)\\
= &  - 2\int_{\Omega} |\nabla \varpi^*|^2 dx - 2 \int_{\partial \Omega} c|\varpi^*|^2 m_\partial(dx) \leq 0 . 
\end{align*} 
 {\color{\magenta} Now observe that \eqref{probFracOmega} implies
 \begin{align*}
 \int_{\overline{\Omega}} \varpi^* D^\alpha_t \varpi^*\, m(dx) = \int_{\overline{\Omega}} \varpi^* G \varpi^*\, m(dx).
 \end{align*}
 Moreover, for all $x \in \overline{\Omega}$, 
 \begin{align*}
	0 \geq & D^\alpha_t | \varpi^* |^2 - 2\varpi^* D^\alpha_t \varpi^* \\ 
	= & \frac{1}{\Gamma(1-\alpha)} \int_0^t  \left[ 2\bigg(\varpi^*(s,x) - \varpi^*(t,x) \bigg) \frac{d \varpi^*}{d s}(s,x) \right] (t-s)^{-\alpha} ds\\
	= & \frac{1}{\Gamma(1-\alpha)} \int_0^t  \left[ \frac{d}{d s} \bigg(\varpi^*(s,x) - \varpi^*(t,x) \bigg)^2  \right] (t-s)^{-\alpha} ds\\
	= & \frac{1}{\Gamma(1-\alpha)} \left( - \frac{(\varpi^*(0,x) - \varpi^*(t,x))^2}{t^\alpha} -\alpha \int_0^t \frac{\big(\varpi^*(s,x) - \varpi^*(t,x) \big)^2}{(t-s)^{\alpha+1}} ds \right), \quad t>0.
 \end{align*}
Indeed, for $(t-s) \to 0$, we use the fact that $\varpi^* \in C[0, \infty) \cap C^1(0, \infty)$ with
\begin{align*}
\frac{(\varpi^*(s,x) - \varpi^*(t,x))^2}{(t-s)^\alpha} = \left(\frac{\varpi^*(s,x) - \varpi^*(s + (t-s),x)}{t-s} \right)^2 \, (t-s)^{2-\alpha} 
\end{align*} 
and  
\begin{align*}
\frac{\big(\varpi^*(s,x) - \varpi^*(t,x)\big)^2}{(t-s)^{\alpha+1}} = \left( \frac{\varpi^*(s,x) - \varpi^*(s + (t-s),x)}{t-s} \right)^2 \, (t-s)^{1-\alpha}.
\end{align*}
Thus, for the energy $\mathcal{E}(t) = \| \varpi^*(t, \cdot) \|^2_{L^2(\overline{\Omega})}$ we have $D^\alpha_t \mathcal{E}(t) \leq 0$. Recall the fractional integral $I^\alpha_t$ for which $I^\alpha_t D^\alpha_t u = u(t)- u(0)$ and find
\begin{align*}
I^\alpha_t D^\alpha_t \mathcal{E}(t):= \frac{1}{\Gamma(\alpha)} \int_0^t (t-s)^{\alpha-1} D^\alpha_t \mathcal{E}(s)\, ds = \mathcal{E}(t) - \mathcal{E}(0) \leq 0
\end{align*}
for which $\mathcal{E}(t) \leq \mathcal{E}(0)=0$.} We conclude that, for all $t\geq 0$, $\varpi^*$ is $m$-a.e. zero with $T\varpi^*=0$ for almost all $x \in \partial \Omega$. Since $\varpi^*$ is continuous on $\overline{\Omega}$, then $\varpi_1 = \varpi_2$ on $[0, \infty) \times \overline{\Omega}$. This proves uniqueness and \textit{ii)}.\\

\textit{iii)} Observe that, for all $t \geq 0$, we have the following representation on $L^2(\overline{\Omega}, m)$,
\begin{align*}
\mathsf{w}_{f|_{\partial \Omega}} (t,x) = \int_0^\infty l(t,s)\, S_s f|_{\partial \Omega} (x)\, ds, \quad t\geq 0,\; x \in \overline{\Omega}
\end{align*}
where $l$ has been introduced in Section \ref{sec:secXL} and the semigroup $S_t$ on $C(\overline{\Omega})$ coincides with a symmetric Markov semigroup on $L^2(\overline{\Omega})$, see \cite[Theorem 2.3]{AMPR03}. Since the semigroup is compact (\cite[Corollary 2.7]{AMPR03}), there exists a sequence of eigenvalues $\{\mu_k,\, k \in \mathbb{N}_0\}$ and a sequence of eigenfunctions $\{\psi_k,\, k \in \mathbb{N}_0\}$ such that $S_t f$ has the eigenfunction expansion on $L^2(\overline{\Omega})$ (see \cite{BvonBR06,vBelFra})  
\begin{align}
S_t f = \sum_k e^{-t \mu_k } (f, \psi_k) \, \psi_k
\label{uf0}
\end{align}
with $(f,g) := (f,g)_\Omega + (f,g)_{\partial \Omega}$ w.r.t. $m(dy) = \mathbf{1}_{\Omega} dy +(\eta/\sigma) \mathbf{1}_{\partial \Omega} m_\partial (dy)$. We write
\begin{align*}
\int_\Lambda f \, \psi_k m(dx) = (f, \psi_k )_\Lambda, \quad \Lambda \subseteq \overline{\Omega} \quad \textrm{and} \quad (f, \psi_k)_{\overline{\Omega}} = (f, \psi_k).
\end{align*} 
From \eqref{repS} we get
\begin{align}
\mathsf{w}_{f|_{\Omega}} (t,x) = \sum_k e^{-\mu_k t}(f|_{\Omega}, \psi_k )_{\Omega}\, \psi_k(x), \quad t\geq 0,\; x \in \overline{\Omega}
\label{uf1}
\end{align}
and
\begin{align}
\mathsf{w}_{f|_{\partial \Omega}} (t,x) = \sum_k E_\alpha(-\mu_k t^\alpha) (f|_{\partial \Omega}, \psi_k )_{\partial \Omega}\, \psi_k(x) , \quad t\geq 0,\; x \in \overline{\Omega}
\label{uf2}
\end{align}
on $L^2(\overline{\Omega})$ where the last equality follows from \eqref{LapML}. The monotonicity property given in \cite[Theorem 3.5]{AMPR03} says that 
\begin{align}
\label{monotS}
S^\dagger_t \leq S_t \leq S^+_t
\end{align}
where $S^\dagger_t$ is the (Dirichlet) semigroup for the killed Brownian motion and $S^+_t$ is the semigroup for the reflected sticky Brownian motion (corresponding to the limits of $c \in (0, \infty)$ already mentioned in Section \ref{sec:intro}). Let us introduce the set of eigenvalues and eigenvetors $\{\mu^0_k, \psi^0_k \}$ corresponding to $S^+_t$ (that is, as $c\to 0$). From \cite[Theorem 3.1]{vBelFra} we know that 
\begin{align*}
0 = \mu^0_0 < \mu^0_1 \leq \mu^0_2 \leq \cdots \leq \mu^0_k
\end{align*}
and $\mu_k^0 \to \infty$ as $k \to \infty$. With \eqref{uf1} and \eqref{uf2} at hand, by considering \eqref{monotS} and the Parseval's identity, we have
\begin{align*}
\| \mathsf{w}_{f|_{\Omega}} \|_{L^2(\overline{\Omega})} \leq \exp \left(-\mu_0^0\, t \right)  \| f \|_{L^2(\Omega)} 
\end{align*}
and
\begin{align*}
\| \mathsf{w}_{f|_{\partial \Omega}} \|_{L^2(\overline{\Omega})} \leq E_\alpha(-\mu_0^0\, t^\alpha) \| f\|_{L^2(\partial \Omega)}.
\end{align*}
In general, formula \eqref{uf1} can be treated by known arguments, we refer to \cite{BvonBR06}. Since $f \in \mathcal{F}(\partial \Omega)$, then we focus on \eqref{uf2} and recall that $E_\alpha(-z)$ defined in Section \ref{sec:secXL} is completely monotone (a non increasing function of $z\geq 0$). For $f\in \mathcal{F}(\partial \Omega)$, note that
\begin{align}
\sum_k |(f, \psi_k)|^2 = \int_{\overline{\Omega}} |f|^2 m(dx) \leq   \int_{\overline{\Omega}} |g|^2 m(dx) = \sum_k |(g, \psi_k)|^2.
\label{fFourier}
\end{align}
Let {\color{\magenta} $\mu^* = \min_k\{\mu_k\}=\mu_0>0$ with $c>0$}. Observe that 
\begin{align*}
& \int_{\overline{\Omega}} \Big| \sum_{k} E_\alpha(-\mu_k t^\alpha)\, (f|_{\partial \Omega}, \psi_k)_{\partial \Omega}\, \psi_k(x) \Big|^2 m(dx) \\
\leq &  \big|E_\alpha(-\mu^* t^\alpha)\big|^2 \sum_{k} \big| (f|_{\partial \Omega}, \psi_k)_{\partial \Omega}|^2 \int_{\overline{\Omega}} |\psi_k(x) \big|^2 m(dx) \\
\leq &  \left(\frac{c_E}{1+ \mu^* t^\alpha}\right)^2 \sum_{k} \big| (f|_{\partial \Omega}, \psi_k)_{\partial \Omega} \big|^2 < \infty
\end{align*}
where we used \eqref{MLbound} and
\begin{align}
\sum_{k} \big| (f|_{\partial \Omega}, \psi_k)_{\partial \Omega} \big|^2 = \| f|_{\partial \Omega} \|^2_{L^2(\overline{\Omega})} = \| Tg \|^2_{L^2(\partial \Omega)} < \infty.
\label{L2fourierf}
\end{align}
Now write
\begin{align}
\mathsf{w}_n = \sum_{k \leq n} E_\alpha(-\mu_k t^\alpha) (f|_{\partial \Omega}, \psi_k)_{\partial \Omega}\, \psi_k(x), \quad n \in \mathbb{N}_0
\label{defSERIESwn}
\end{align}
and observe that
\begin{align*}
\int_{\overline{\Omega}}| \mathsf{w}_{f|_{\partial \Omega}} - \mathsf{w}_{n} |^2 m(dx)
\leq &  \sum_{k > n} | E_\alpha(-\mu_k t^\alpha) (f|_{\partial \Omega}, \psi_k)_{\partial \Omega}|^2 \int_{\overline{\Omega}}|\psi_k(x) |^2 m(dx)\\ 
\leq &  \sum_{k > n} \left(\frac{c_E}{1+ \mu_k t^\alpha}\right)^2 \big| (f|_{\partial \Omega}, \psi_k)_{\partial \Omega} \big|^2\\
\leq &  \left(\frac{c_E}{1+ \mu^* t^\alpha}\right)^2 \sum_{k > n} \big| (f|_{\partial \Omega}, \psi_k)_{\partial \Omega} \big|^2 \to 0 \quad \textrm{as } n\to \infty.
\end{align*}
Then, we conclude that \eqref{uf2} converges in $L^2(\overline{\Omega})$ uniformly in $t\geq 0$. The continuity w.r.t. the initial datum can be obtained by considering that $E_\alpha(z) \to 1$ as $z\to 0$ for all $\alpha \in (0,1]$ and
\begin{align*}
\Big\| \sum_k E_\alpha(-\mu_k t^\alpha) (f|_{\partial \Omega}, \psi_k)_{\partial \Omega}\, \psi_k(x)  - f \Big\|^2_{L^2(\overline{\Omega})} \leq  | E_\alpha(-\mu^* t^\alpha) - 1 |^2 \|f \|^2_{L^2(\overline{\Omega})}.
\end{align*}

Now recall that $\{\mu_k\}_k$ is a non decreasing sequence with $\mu_k \to \infty$ as $k \to \infty$ and 
\begin{align}
\mu_k t^\alpha\, E_\alpha (-\mu_k t^\alpha) \leq c_E \frac{\mu_k t^\alpha}{1 + \mu_k t^\alpha}.
\label{boundMLfunction}
\end{align}
In particular, for $t>0$
\begin{align*}
\sum_{k > n} \left(\frac{c_E}{1+ \mu_k t^\alpha}\right)^2 \big| (f|_{\partial \Omega}, \psi_k)_{\partial \Omega}\, \mu_k \big|^2 \leq c_E^2\, t^{-2\alpha}\sum_{k > n} \big| (f|_{\partial \Omega}, \psi_k)_{\partial \Omega}\big|^2.
\end{align*}
Thus, we also get that
\begin{align}
\int_{\overline{\Omega}}| \Delta (\mathsf{w}_{f|_{\partial \Omega}} - \mathsf{w}_{n}) |^2 m(dx) 
\leq & \sum_{k > n} \left(\frac{c_E}{1+ \mu_k t^\alpha}\right)^2 \big| (f|_{\partial \Omega}, \psi_k)_{\partial \Omega}\, \mu_k \big|^2 \notag\\
\leq & c^2_E\, t^{-2\alpha} \sum_{k > n}  \big| (f|_{\partial \Omega}, \psi_k)_{\partial \Omega} \big|^2 \to 0 \quad \textrm{as } n \to \infty. \label{boundMU}
\end{align}  
The fact that \eqref{uf2} solves 
\begin{align*}
D^\alpha_t \mathsf{w} = G \mathsf{w} \;\; \textrm{in } \overline{\Omega} \quad \textrm{with} \quad \mathsf{w}_0 = f \in \mathcal{F}(\partial \Omega)
\end{align*}
on $L^2(\overline{\Omega})$ comes from \eqref{eigML} and the fact that $\{\psi_k\}_k \in D(G)$, together with the continuity w.r. to the initial datum. Indeed, the equality
\begin{align}
 (f|_{\partial \Omega}, \psi_k)_{\partial \Omega}\, \psi_k(x) \, D^\alpha_t E_\alpha(-\mu_k t^\alpha)  = E_\alpha(-\mu_k t^\alpha) (f|_{\partial \Omega}, \psi_k)_{\partial \Omega}\, \Delta \psi_k(x)
 \label{EQpointWiseID}
\end{align}
holds term by term. For the convergence {\color{\blue} of the series involving the terms in} \eqref{EQpointWiseID}, it suffices to consider \eqref{boundMU}. This concludes the proof of \textit{iii)}.\\

\textit{iv)} This follows from \eqref{repS}.\\

The proof is completed.
\end{proof}

We now examine $\mathsf{w}_{f|_{\partial \Omega}}$ as a function of $\hat{D}_L$. From the proof of Theorem \ref{lemmaWrond}, for the series \eqref{uf2} we have that
\begin{align*}
& \textrm{$\mathsf{w}_{f|_{\partial \Omega}}(t, \cdot)$  is continuous as a function from $[0, \infty)$ to $L^2(\overline{\Omega})$},\\
& \textrm{$D^\alpha_t \mathsf{w}_{f|_{\partial \Omega}}(t, \cdot)$ and $\Delta \mathsf{w}_{f|_{\partial \Omega}}(t, \cdot)$ are continuous as functions from $(0, \infty)$ to $L^2(\overline{\Omega})$}.
\end{align*}
We introduce 
\begin{align*}
\hat{D}^2_L = \bigg\{ & \varphi: (0, \infty) \times \overline{\Omega} \to \mathbb{R} \textrm{ with $\phi = T\varphi$ such that: } \\
 & \textit{ i) $\frac{\partial \varphi}{\partial t}$ is continuous as a functions from $(0, \infty)$ to $L^2(\overline{\Omega})$} \\
 &  \textrm{ ii) } \Big| \frac{\partial \phi}{\partial t}(t,x) \Big| \leq \mathfrak{g}(x)\, t^{\alpha-1}, \,  t>0, \, x \in \partial \Omega,\;   \mathfrak{g} \in L^2(\partial \Omega) \bigg\}
\end{align*}
and provide the following result.
\begin{theorem}
Let us consider \eqref{uf2}. Then, for $f \in \mathcal{F}(\partial \Omega)$, uniformly on $\overline{\Omega}$,
\begin{align}
\Big| \frac{\partial}{\partial t} T \mathsf{w}_{f|_{\partial \Omega}}(t,x) \Big| \leq \mathfrak{g}(x)\, t^{\alpha-1}, \quad t>0,\, x \in \partial \Omega, \quad \mathfrak{g} \in L^2(\partial \Omega) \label{Ip2Dhat2}
\end{align}
with
\begin{align*}
\mathfrak{g}(x) = c_E\, \Big| \sum_k  (g, \psi_k)_{\partial \Omega}\, \big(\Delta \psi_k\big)(x) \Big|.
\end{align*}
\label{ThmDhatL2}
\end{theorem}
\begin{proof}
according to \eqref{MLbound} and \cite[equation (17)]{Kra03} we get
\begin{align*}
\Big| \frac{\partial}{\partial t} \sum_k E_\alpha(-\mu_k t^\alpha) (f|_{\partial \Omega}, \psi_k)_{\partial \Omega}\, \psi_k \Big| \leq & \, c_E\,  t^{\alpha-1} \Big| \sum_k  (f|_{\partial \Omega}, \psi_k)_{\partial \Omega}\, \psi_k \, \mu_k \Big| = c_E\, t^{\alpha-1} |y|
\end{align*}
where
\begin{align*}
y(x) := \sum_k  (f|_{\partial \Omega}, \psi_k)_{\partial \Omega}\, \mu_k\, \psi_k(x) = \sum_k  (g, \psi_k)_{\partial \Omega}\, \big(\Delta \psi_k\big)(x) =:\Delta y^*(x), \quad x \in \overline{\Omega}.
\end{align*}
From \eqref{fFourier}, 
\begin{align}
\|\Delta y^* \|_{L^2(\overline{\Omega})} = \sum_k  |(g, \psi_k)_{\partial \Omega}|^2\, \mu^2_k \leq \sum_k  |(g, \psi_k)|^2\, \mu^2_k = \|\Delta g \|_{L^2(\overline{\Omega})}.
\label{normL2y}  
\end{align}
Since $\Delta y^* \in L^2(\overline{\Omega})$, then \eqref{Ip2Dhat2} is satisfied. \\

This concludes the proof.
\end{proof}

Observe that for $f \in \mathcal{F}(\partial \Omega)$ we have
\begin{align*}
\Big\| \int_0^\infty e^{-\lambda t}  \frac{\partial}{\partial t}  \mathsf{w}_{f|_{\partial \Omega}} dt \Big\|_{L^\infty(\overline{\Omega})} = \| \lambda R^\alpha_\lambda g \mathbf{1}_{\partial \Omega} - g \mathbf{1}_{\partial \Omega} \|_{L^\infty(\overline{\Omega})} \leq 2\| g \|_{L^\infty(\partial \Omega)}
\end{align*}
and recall that 
\begin{align*}
\frac{\partial}{\partial t} T \mathsf{w}_{f|_{\partial \Omega}} = T \frac{\partial}{\partial t}  \mathsf{w}_{f|_{\partial \Omega}}.
\end{align*}
With \eqref{normL2y} in mind, from \eqref{Ip2Dhat2}, we get
\begin{align}
\Big\| \frac{\partial}{\partial t} \mathsf{w}_{f|_{\partial \Omega}} \Big\|_{L^1(\partial \Omega)} \leq \int_{\overline{\Omega}} \Big| \frac{\partial}{\partial t} \mathsf{w}_{f|_{\partial \Omega}}(t, x) \Big| m(dx) \leq t^{\alpha -1}  \, c_E\,  m(\overline{\Omega}) \| \Delta g \|_{C(\overline{\Omega})}
\label{boundL1timeDer}
\end{align}
where the second inequality follows by observing that $\|\mathfrak{g} \|_{L^1(\overline{\Omega})} \leq c_E\, m(\overline{\Omega}) \| \Delta g \|_{L^\infty (\overline{\Omega})}$ whereas the first one is trivially verified and has been considered only in view of the next result. That is, for $f$ in a subset of $\mathcal{F}(\partial \Omega)$, we show that
\begin{align}
\Big\| \int_0^\infty  e^{-\lambda t} \frac{\partial}{\partial t} \mathsf{w}_{f|_{\partial \Omega}} \, dt \Big\|_{L^1(\partial \Omega)} \leq & m(\overline{\Omega}) \left( \| g\|_{L^\infty(\overline{\Omega})} + \lambda^{-\alpha}\,  \| \Delta g \|_{L^\infty (\overline{\Omega})} \right), \quad \lambda>0
\label{IpDhat1}
\end{align}
and a quick reading is obtained by recalling the Euler integral
\begin{align*}
\lambda^{-\alpha} = \int_0^\infty e^{-\lambda t} \frac{t^{\alpha-1}}{\Gamma(\alpha)} dt.
\end{align*}
For the sake of simplicity we provide this result only dealing with the simple cases $g>0$ or $g<0$, that is $g$ never change its sign on $\overline{\Omega}$. In particular, we assume that
\begin{align}
g \in D(G) \quad \textrm{such that} \quad \{g\geq f\} = \overline{\Omega} \quad \textrm{or} \quad \{g \leq f\} = \overline{\Omega}.
\label{ipotesysDhat}
\end{align}
\begin{theorem}
Assume $f \in \mathcal{F}(\partial \Omega)$ under \eqref{ipotesysDhat}. Then, \eqref{IpDhat1} holds true.
\end{theorem}
\begin{proof}


The fact that $g \in C(\overline{\Omega})$ implies that $g\leq 0$ if $\{g \leq f\} = \overline{\Omega}$ and $g\geq 0$ if $\{g\geq f\}=\overline{\Omega}$. Observe that $\{g < f\} \subset \{g < 0 \}$ and $\{g > f \} \subset \{g > 0 \}$ with $\lambda^\alpha R_{\lambda^\alpha} f - f = \lambda^\alpha R_{\lambda^\alpha} g - g$ on $\{f=g\} = \partial \Omega$, that is $f=g$ pointwise and
\begin{align*}
\int_{\partial \Omega} | \lambda^\alpha R_{\lambda^\alpha} f - f | m(dx) = \int_{\partial \Omega} | \lambda^\alpha R_{\lambda^\alpha} g \mathbf{1}_{\partial \Omega} - g \mathbf{1}_{\partial \Omega} | m(dx).
\end{align*}
We also notice that $g=0$ on $\{g=f\}$ is the trivial case $f=0$ on $\overline{\Omega}$. 

If $g\geq f \geq 0$, then
\begin{align*}
0 \leq S_t (g-f) = S_t g - S_t f \quad \textrm{implies} \quad R^\alpha_\lambda f \leq R^\alpha_\lambda g \quad \textrm{on } \overline{\Omega}
\end{align*}
from the positivity of $S_t$. Thus, $\lambda^\alpha R_{\lambda^\alpha} f - g$ on $\overline{\Omega}$ satisfies 
\begin{align*}
- |g| \leq -g \leq \lambda^\alpha R_{\lambda^\alpha} f - g \leq \lambda^\alpha R_{\lambda^\alpha} g - g \leq | \lambda^\alpha R_{\lambda^\alpha} g - g |.
\end{align*}
Moreover, we have
\begin{align*}
- | \lambda^\alpha R_{\lambda^\alpha} g - g | - |g| \leq \lambda^\alpha R_{\lambda^\alpha} f - g \leq |g| + | \lambda^\alpha R_{\lambda^\alpha} g - g |  \quad \textrm{on } \overline{\Omega}. 
\end{align*}

If $g \leq f \leq 0$, then $\lambda^\alpha R_{\lambda^\alpha} f - g$ on $\overline{\Omega}$ satisfies
\begin{align*}
-  | \lambda^\alpha R_{\lambda^\alpha} g - g |  \leq \lambda^\alpha R_{\lambda^\alpha} g - g \leq  \lambda^\alpha R_{\lambda^\alpha} f - g \leq -g \leq |g|
\end{align*}
and we write
\begin{align*}
- | \lambda^\alpha R_{\lambda^\alpha} g - g | - |g|  \leq \lambda^\alpha R_{\lambda^\alpha} f - g \leq |g| + | \lambda^\alpha R_{\lambda^\alpha} g - g |  \quad \textrm{on } \overline{\Omega}.
\end{align*}
This means that $| \lambda^\alpha R_{\lambda^\alpha} f - f | = | \lambda^\alpha R_{\lambda^\alpha} f - g |$ on $\partial \Omega$ and  
\begin{align*}
| \lambda^\alpha R_{\lambda^\alpha} f - g | \leq |g| + | \lambda^\alpha R_{\lambda^\alpha} g - g | \quad \textrm{on } \overline{\Omega}, 
\end{align*}
that is, under \eqref{ipotesysDhat},
\begin{align}
\int_{\partial \Omega}  |\lambda^\alpha R_{\lambda^\alpha} f - g|  m(dx) \leq \int_{\overline{\Omega}} \left( |g| + |\lambda^\alpha R_{\lambda^\alpha} g - g|\right)  m(dx).
\label{boundL1gRg}
\end{align}

We now consider $G^*$ on $L^2(\Omega) \oplus L^2(\partial \Omega)$ for the semigroup $S_t$ on $L^2(\Omega) \oplus L^2(\partial \Omega)$, 
\begin{align*}
& D(G^*)=\{(\varphi, T\varphi) \in H^1(\Omega) \oplus L^2(\partial \Omega) :\, \Delta \varphi \in L^2(\Omega),\, \partial_{\bf n}\varphi \in L^2(\partial \Omega)\}\\
& G^* (\varphi, T\varphi) = (\Delta \varphi, \, -\partial_{\bf n} \varphi - c T\varphi)
\end{align*}
and $G$ on $C(\overline{\Omega})$ for the semigroup $S_t$ on $C(\overline{\Omega})$ with the inequality
\begin{align}
\| R_{\lambda} g \| _{L^\infty (\overline{\Omega})} \leq c_R\, \lambda^{-1} \big( \| g\|_{L^p(\Omega)} + \| g\|_{L^q(\partial \Omega)} \big), \quad p>d/2, \; q>d-1.
\label{contractionR}
\end{align}
We mainly refer to \cite{AMPR03, ArendtSauter2022} and Section \ref{sec:secX}. The semigroup $S_t$ on $L^2(\overline{\Omega})$ is contractive in both $L^2$ and $L^\infty$. Since $S_t$ is differentiable and strongly continuous in $L^2$, then for $g^* \in D(G^*)$,
\begin{align}
\Big\| \frac{d}{d t} S_t g^* \Big\|_{L^2(\overline{\Omega})}  = \| S_t \Delta g^* \|_{L^2(\overline{\Omega})} \leq \| \Delta g^* \|_{L^2(\overline{\Omega})}.
\label{SGcommute}
\end{align}
Thus, for $S_t$ on $L^2(\overline{\Omega})$ and $g^* \in D(G^*)$,  
\begin{align*}
\| \lambda R^\alpha_\lambda g^* - g^* \|_{L^2(\overline{\Omega})} 
= & \Big\| \lambda^\alpha \int_0^\infty e^{-\lambda^\alpha s} S_s g^* \,ds - g^* \Big\|_{L^2(\overline{\Omega})}\\
= & \Big\| \int_0^\infty e^{-\lambda^\alpha s} \frac{d}{ds} S_s g^* \,ds \Big\|_{L^2(\overline{\Omega})} \\
\leq & \lambda^{-\alpha} \Big\| \frac{d}{ds} S_s g^* \Big\|_{L^2(\overline{\Omega})} \\
\leq & \lambda^{-\alpha} \|\Delta g^* \|_{L^2(\overline{\Omega})}.
\end{align*}
Consider $g \in D(G) \subset D(G^*)$, then
\begin{align}
\| \lambda R^\alpha_\lambda g - g \|_{L^2(\overline{\Omega})} \leq  \lambda^{-\alpha}\, \| \Delta g \|_{L^2(\overline{\Omega})} \leq \lambda^{-\alpha}\, \sqrt{m(\overline{\Omega})}\, \|\Delta g \|_{L^\infty(\overline{\Omega})}
\label{boundCOMMUTEg}
\end{align}
where the last step is obtained from the fact that $L^\infty \subset L^2$. For $f \in \mathcal{F}(\partial \Omega)$, 
\begin{align*}
\| f \|_{L^2(\overline{\Omega})} = \| g \mathbf{1}_{\partial \Omega} \|_{L^2(\overline{\Omega})} = \| Tg \|_{L^2(\partial \Omega)} \leq C_H\, \| g \|_{H^1(\Omega)}, \quad C_H>0
\end{align*}
where $T$ is a continuous operator for $H^1(\Omega)$ into $L^2(\partial \Omega)$. From \eqref{boundL1gRg} and \eqref{boundCOMMUTEg} we get
\begin{align*}
\| \lambda^\alpha R_{\lambda^\alpha} f - f \|_{L^1(\partial \Omega)} 
\leq & \| g\|_{L^1(\overline{\Omega})} + \sqrt{m(\overline{\Omega})} \|\lambda^\alpha R_{\lambda^\alpha} g - g\|_{L^2(\overline{\Omega})}\\ 
\leq & m(\overline{\Omega}) \left( \| g\|_{L^\infty(\overline{\Omega})} + \lambda^{-\alpha}\,  \| \Delta g \|_{L^\infty (\overline{\Omega})} \right)
\end{align*}
which is \eqref{IpDhat1}. With the help of \eqref{contractionR}, as in \cite{ArendtSauter2022}, we realize the Wentzell-Robin boundary condition  on $C(\overline{\Omega})$. In particular, for $S_t$ on $C(\overline{\Omega})$, given $g \in D(G)$, $\Delta S_t g = S_t \Delta g$ holds pointwise in $\overline{\Omega}$. 
\end{proof}

Concerning the compact representation in \textit{iii)} of Theorem \ref{lemmaWrond}, we now examine the convergence of the series \eqref{uf2} for $\alpha \in (0,1]$ and $d>1$.\\

({\it $\alpha=1$ and $d>1$}) Formula \eqref{boundMLfunction} played a role in the proofs above. Here we discuss the case $\Omega \subset \mathbb{R}^d$ and $\alpha=1$ for which the Mittag-Leffler function becomes an exponential function. Thus, we deal with \eqref{uf0}. First we observe that
\begin{align*}
S_t f = \sum_{k \geq 0} (S_t \psi_k)\, (f, \psi_k) \quad \textrm{where} \quad S_t \psi_k = e^{-t \mu_k} \psi_k, \quad  S_t \Delta \psi_k =  - \mu_k\, e^{-t \mu_k} \psi_k = \frac{d}{dt} S_t \psi_k.
\end{align*}
Now, for $t>0$,
\begin{align*}
\| S_t \psi_k \|_{L^2(\overline{\Omega})} 
= & \Big\| \int_{\overline{\Omega}} p(t,x,y) \psi_k(y) m(dy) \Big\|_{L^2(\overline{\Omega})}\\
\leq & \int_{\overline{\Omega}}  \left( \int_{\overline{\Omega}}  \big| p(t,x,y) \psi_k(y) \big| m(dy) \right)^2 m(dx)\\
\leq & \int_{\overline{\Omega}} \| p(t,x, \cdot) \|_{L^2(\overline{\Omega})} \|\psi_k \|_{L^2(\overline{\Omega})} \, m(dx)\\
\leq & C_{\overline{\Omega}}\, \| p(t, \cdot, \cdot) \|_{L^\infty(\Omega \times \Omega)}, \quad C_{\overline{\Omega}} >0
\end{align*}
and (H\"{o}lder's inequalites and Parseval's idenity)
\begin{align*}
\sum_{k \geq 0} |S_t \psi_k\, (f, \psi_k)| 
\leq & \sqrt{\sum_{k \geq 0} |S_t \psi_k|^2}\, \sqrt{\sum_{k \geq 0} |(f,\psi_k) |^2 } = \| S_t \psi_k \|_{L^2(\overline{\Omega})} \, \| f\|_{L^2(\overline{\Omega})}
\end{align*}
leads to
\begin{align*}
\sum_{k \geq 0} |S_t \psi_k\, (f, \psi_k)| \leq C_{\overline{\Omega}}\, \| p(t, \cdot, \cdot) \|_{L^\infty(\Omega \times \Omega)}\, \| f\|_{L^2(\overline{\Omega})}.
\end{align*}
In particular, for $n \in \mathbb{N}$, for $t>0$ and $x \in \overline{\Omega}$,
\begin{align}
\sum_{k \geq n} |S_t \psi_k\, (f, \psi_k)| \leq C_{\overline{\Omega}}\, \| p(t, \cdot, \cdot) \|_{L^\infty(\Omega \times \Omega)}\, \sqrt{\sum_{k \geq n} |(f, \psi_k)|^2} \to 0
\label{UnifNormS}
\end{align}
as $n\to \infty$ uniformly. Since \eqref{uf0} holds in $L^2(\overline{\Omega})$, then the series converges absolutely and uniformly for every $t>0$ and $x \in \overline{\Omega}$ (see also Section \ref{sec:secX}). This can be regarded as a spectral consequence of ultracontractivity. Indeed (\cite[Proposition 2.6]{AMPR03}), for $t>0$, 
\begin{align}
\| S_t g \|_{L^\infty(\overline{\Omega})} \leq t^{-\theta}\, \| g \|_{L^2(\overline{\Omega})}, \quad \textrm{with $\theta>1/2$ and depending on $d$} .
\label{UltraContrctS}
\end{align}
These arguments seem to be useful to prove the convergence of \eqref{uf2}. However, an exhaustive discussion on the convergence of the series $\mathsf{w}_{f|_{\partial \Omega}}$ (under weak regularity conditions for the initial datum $f$) is beyond the scope of the present work. Thus, we mainly focus on the case $d=2$ with $\alpha \in (0,1)$.\\

({\it $\alpha \neq 1$ and $d>1$}) Consider the series $\mathsf{w}_{f|_{\partial \Omega}}$ and the sequence $\{\mathsf{w}_n\}_n$ respectively defined in \eqref{uf2} and \eqref{defSERIESwn}. First we observe that, for $d > 1$, 
\begin{align*}
\| \mathsf{w}_n - \mathsf{w}_{f|_{\partial \Omega}}\|_{L^p(\overline{\Omega})} \to 0 \quad \textrm{as $n \to \infty$ for $p \in [1,2]$}.
\end{align*}
This follows from \textit{iii)} of Theorem \ref{lemmaWrond} and the fact that $L^p \subset L^2$. We provide the following unrefined result for planar domains of interest in the present work. Nonetheless, we deal with the worst-case scenario where $c=0$. 

\begin{theorem}
Assume $c=0$ and $d=2$.  Then, $\mathsf{w}_{f|_{\partial \Omega}}$ converges uniformly in $t>0$ and absolutely in $L^1(\overline{\Omega})$, that is  
\begin{align}
\sum_{k} \| E_\alpha(-\mu_k t^\alpha)\, (f|_{\partial \Omega}, \psi_k)\, \psi_k \|_{L^1(\overline{\Omega})} < \infty
\label{absL1}
\end{align}
and the series $\mathsf{w}_{f|_{\partial \Omega}}$ converges (absolutely) almost everywhere in $\overline{\Omega}$ .
\end{theorem}
\begin{proof}
We write here $\mu_k = \mu_k^0$, $k \in \mathbb{N}_0$ to streamline the notation. According to the proof of Theorem \ref{lemmaWrond}, the eigenvalue $\mu_k^0$ corresponds to $c=0$. From standard arguments we get the rough estimate, for $k>0$,
\begin{align}
|(g , \psi_k)| = \frac{1}{\mu_k} |(g, \Delta \psi_k )| = \frac{1}{\mu_k} |(\Delta g,\psi_k )| \leq \frac{1}{\mu_k}\, \sqrt{m(\overline{\Omega})}\, \| \Delta g \|_{L^2(\overline{\Omega})}.
\label{boundFourier}
\end{align}
Indeed, $-\mu_k \psi_k = \Delta \psi_k$ and
\begin{align*}
\int_{\overline{\Omega}} \psi_k \Delta g = \int_{\overline{\Omega}} g \, \Delta \psi_k, \quad g \in D(G).
\end{align*}
Now we use the fact that $\mu_k$ grows like $k^{1/(d-1)}$ (see \cite[Section 4]{vBelFra}) and
\begin{align}
\sum_{k \geq 1} \frac{1}{\mu_k^2} < \infty \quad \textrm{for} \quad d < 3
\label{condConvEig}
\end{align}
together with
\begin{align*}
\| \psi_k \|_{L^1(\overline{\Omega})} \leq \sqrt{m(\overline{ \Omega})} \|\psi_k\|_{L^2(\overline{\Omega})}.
\end{align*}
We also consider that $\| E_\alpha(-\mu_0 t^\alpha)\, (g, \psi_0)\, \psi_0 \|_{L^1(\overline{\Omega})} = m(\overline{\Omega}) |(g, \psi_0)\, \psi_0|$ and write
\begin{align*}
\sum_{k \geq 1} = \sum_k.
\end{align*} 
From \eqref{MLbound} and \eqref{boundMLfunction} we get
\begin{align*}
\sum_{k} \| E_\alpha(-\mu_k t^\alpha)\, (g, \psi_k)\, \psi_k \|_{L^1(\overline{\Omega})} \leq & \sqrt{m(\overline{ \Omega})} \sum_{k} \frac{c_E}{1 + \mu_k t^\alpha} |(g, \psi_k)|\\ 
\leq & \sqrt{m(\overline{ \Omega})}\, c_E\, t^{-\alpha} \sum_k \frac{|(g, \psi_k)|}{\mu_k}
\end{align*}
and, from \eqref{boundFourier},
\begin{align}
\sum_k \| E_\alpha(-\mu_k t^\alpha)\, (g, \psi_k)\, \psi_k \|_{L^1(\overline{\Omega})} \leq &  c_E\, t^{-\alpha}\, \| \Delta g \|_{L^2(\overline{\Omega})} \, m(\overline{ \Omega})\,\sum_k \frac{1}{\mu^2_k}
\label{ConvSg}
\end{align}
proves convergence for $d<3$. 

Let us consider $f \in \mathcal{F}(\partial \Omega)$, we obtain from \eqref{MLbound}
\begin{align*}
& \sum_k \| E_\alpha(-\mu_k t^\alpha)\, (f|_{\partial \Omega}, \psi_k)\, \psi_k \|_{L^1(\overline{\Omega})}\\
\leq & \sqrt{m(\overline{ \Omega})} \sum_{k} \frac{c_E}{1 + \mu_k t^\alpha} |(f|_{\partial \Omega}, \psi_k)|\\
\leq &  \sqrt{m(\overline{ \Omega}) \sum_{k} \left(\frac{c_E}{1 + \mu_k t^\alpha}\right)^2  \sum_k |(f|_{\partial \Omega}, \psi_k)|^2}\\
\leq & \sqrt{m(\overline{ \Omega}) \sum_{k} \left(\frac{c_E}{1 + \mu_k t^\alpha}\right)^2 \, \| g \|^2_{L^2(\overline{\Omega})}}
\end{align*}
where the last inequality comes from \eqref{fFourier} and, from \eqref{boundMLfunction}, 
\begin{align*}
\sum_{k} \left(\frac{c_E}{1 + \mu_k t^\alpha} \right)^2 \leq c_e^2\, t^{-2\alpha} \sum_k \frac{1}{\mu_k^2}.
\end{align*}
Thus, 
\begin{align}
\sum_k \| E_\alpha(-\mu_k t^\alpha)\, (f|_{\partial \Omega}, \psi_k)\, \psi_k \|_{L^1(\overline{\Omega})} \leq  c_E\, t^{-\alpha} \|g \|_{L^2(\overline{\Omega})}\, \sqrt{m(\overline{\Omega}) \, \sum_k \frac{1}{\mu_k^2}}
\label{ConvSf}
\end{align}
and, from \eqref{condConvEig}, the series converges for $d<3$. Formulae \eqref{ConvSg} and \eqref{ConvSf} both imply convergence in $L^1(\overline{\Omega})$ from which we obtain, uniformly in $t>0$, absolute convergence a.e. in $\overline{\Omega}$. For example, from \eqref{ConvSf} we write
\begin{align*}
\sum_k \int_{\overline{\Omega}} | E_\alpha(-\mu_k t^\alpha)\, (f|_{\partial \Omega}, \psi_k)\, \psi_k | m(dx) = \int_{\overline{\Omega}} \left( \sum_k  | E_\alpha(-\mu_k t^\alpha)\, (f|_{\partial \Omega}, \psi_k)\, \psi_k | \right)\, m(dx)
\end{align*}
and conclude that the series converges absolutely a.e. in $\overline{\Omega}$. 
\end{proof}

The previous result is obtained under the assumption of weak regularity, namely $f \in H^1(\Omega)$. This does not contradict the convergence in $L^2(\overline{\Omega})$ which is valid for $d\geq 2$ due to \eqref{L2fourierf}.\\

The dynamic boundary condition implies an asymptotic behaviour (with growth order $1/q$ and $q=d-1$) which differs from the behaviour in case of classical (non-dynamic) boundary condition (with growth order $1/p$ and $p=d/2$). The spectral growth order decreases with respect to the classical cases (see \cite{vBelFra}). \\

We provide some comment based on the previous results and conclude the discussion about the process $\hat{X}$.\\

({\it Switching behaviour}) Recall formulas \eqref{repf12} and \eqref{wXhatLaw}. Since $\hat{\gamma}_t$ is absolutely continuous w.r. to the Lebesgue measure, then we can write \eqref{Rhat} as
\begin{align*}
\hat{R}_\lambda f(x) = \mathbf{E}_x\left[\int_0^\infty e^{-\lambda t} f(\hat{X}_t) d(\hat{\Gamma}_t + \hat{\gamma}_t) \right], \quad \lambda>0, \quad x \in \overline{\Omega}, \quad f \in C(\overline{\Omega}).
\end{align*}
We also write $\mathsf{w}= \mathsf{w}_{f|_\Omega} + \mathsf{w}_{f|_{\partial \Omega}}$ meaning that $\mathsf{w}_{f|_\Omega}: [0, \infty)\times \overline{\Omega} \to \mathbb{R}$ solves
\begin{equation}
\label{probInterior}
\left\lbrace
\begin{array}{ll}
\displaystyle \frac{\partial}{\partial t} \mathsf{w} = \Delta \mathsf{w}, \quad (0, \infty) \times \Omega\\
\\
\displaystyle \eta T \frac{\partial}{\partial t} \mathsf{w} = - \sigma \partial_{\bf n} \mathsf{w} - c \mathsf{w},  \quad (0, \infty) \times \partial \Omega,\\
\\
\displaystyle \mathsf{w} (0,x)= f_1(x), \quad x \in \overline{\Omega}, \quad f_1 \in \mathcal{F}(\Omega)
\end{array}
\right. 
\end{equation}
whereas $\mathsf{w}_{f|_{\partial \Omega}} : [0, \infty)\times \overline{\Omega} \to \mathbb{R}$ solves
\begin{equation}
\label{probBoundary}
\left\lbrace
\begin{array}{ll}
\displaystyle D^\alpha_t \mathsf{w} = \Delta \mathsf{w},  \quad (0, \infty) \times \Omega,\\
\\
\displaystyle \eta T D^\alpha_t \mathsf{w} = - \sigma \partial_{\bf n} \mathsf{w} - c \mathsf{w},  \quad (0, \infty) \times \partial \Omega,\\
\\
\displaystyle \mathsf{w} (0,x)= f_2(x), \quad x \in \overline{\Omega}, \quad f_2 \in \mathcal{F}(\partial \Omega)
\end{array}
\right.
\end{equation}
with $f=f_1 + f_2$ according to \eqref{repf12}. By following the same arguments as in \eqref{SameArgs}, the conditions above take respectively the forms
\begin{align*}
\eta \frac{\partial}{\partial t} T \mathsf{w}_{f|_{\Omega}} = -\sigma \partial_{\bf n} \mathsf{w}_{f|_{\Omega}} - c \, \mathsf{w}_{f|_{\Omega}} \quad \textrm{on } \partial \Omega 
\end{align*}
and
\begin{align*}
\eta D^\alpha_t T \mathsf{w}_{f|_{\partial\Omega}} = -\sigma \partial_{\bf n} \mathsf{w}_{f|_{\partial\Omega}} - c \, \mathsf{w}_{f|_{\partial\Omega}} \quad \textrm{on } \partial \Omega.
\end{align*}

\vspace{.5cm}

({\it Existence and uniqueness}) We recall that $(G,D(G))$ generates a semigroup on $L^2(\overline{\Omega}, m)$ and the functions $\mathsf{w}_{f|_{\Omega}}$, $\mathsf{w}_{f|_{\partial \Omega}}$ are well-defined. The initial data $f_1$,$f_2$ are not in $D(G)$ but we still have a classical solution of the Cauchy problem \eqref{probInterior} equivalent to \eqref{probX} and the fractional Cauchy problem \eqref{probBoundary} equivalent to \eqref{probXL1}. Indeed, $D(G)\subset L^2(\overline{\Omega})$ and $S_t$ is associated with a maximal monotone (it is a contraction of $C_0$-semigroup) and self-adjoint (\cite{GrVo17}) operator. Thus, $f_1,f_2 \in L^2(\overline{\Omega})$ ensures existence and uniqueness of both solutions (\cite[Theorem VII.7]{Brezis}). Moreover, in both cases we have a compact representation written in terms of the semigroup $S_t$ of $X$.

\begin{remark}
($1$-dimensional case) As an example, we consider the problem \eqref{probXL} on $\Omega=(0,1)$. For the sake of simplicity we fix $c=0$, $\eta=\sigma=1$. Then, we get
\begin{align*}
w(t,x) = \sum_{k} E_\alpha(-\mu_k t) \, \psi_k(x) \, f_k, \quad t\geq 0, \; x \in (0,1)
\end{align*}
where $\mu_k = (\pi k)^2$, $\psi_k(x) = \cos(x \sqrt{\mu_k}) - \sqrt{\mu_k} \sin (x \sqrt{\mu_k})$ and
\begin{align*}
f_k = \int_{[0,1]} f(x)\, \psi_k(x)\, m(dx), \quad m(dx) = dx + \big(\delta_0(dx) + \delta_1(dx)\big).
\end{align*}
\end{remark}

\subsection{Main results on the NLBVP: the process $\bar{X}$}
\label{Sec:MainXbar}

Here we obtain the probabilistic characterization of the NLBVP previously introduced.

\begin{theorem}
\label{thm:MAINI}
For the solution $u \in C((0, \infty)\times \overline{\Omega}) \cap \bar{D}_L$ to the NLBVP \eqref{probXbar} with $f \in C(\overline{\Omega})$, the following representation holds true
\begin{align*}
u(t,x) = \mathbf{E}_x\left[ f(\bar{X}_t) \right], \quad t>0,\; x \in \overline{\Omega}
\end{align*}
where the process $\bar{X} = \{\bar{X}_t\}_{t\geq 0}$ can be obtained via time change of an elastic Brownian motion. In particular, 
\begin{align}
\label{MINrepXbar}
\mathbf{E}_x\left[ f(\bar{X}_t) \right] = & \mathbf{E}_x\left[f(X^+ \circ \bar{V}^{-1}_t) \exp \left( - (c/\sigma)\, \gamma^+ \circ \bar{V}^{-1}_t \right) \right], \quad t>0,\; x \in \overline{\Omega}
\end{align}
where $\bar{V}^{-1}_t$ is the inverse to $\bar{V}_t = t + H \circ (\eta/\sigma) \gamma^+_t$.
\end{theorem}

\begin{proof}
Our time-change via $\bar{V}_t$ is obtained by considering the elastic Brownian motion and the couple $(X^+, \gamma^+)$.

Assume that \eqref{MINrepXbar} holds true. Then,
\begin{align*}
\mathbf{E}_x\left[ \int_0^\infty e^{-\lambda t} f(\bar{X}_t) dt \right]
= & \mathbf{E}_x\left[ \int_0^\infty e^{-\lambda t - (c/\sigma) \gamma^+ \circ \bar{V}^{-1}_t } f(X^+ \circ \bar{V}^{-1}_t) dt \right]\\
= & \mathbf{E}_x\left[ \int_0^\infty e^{- \lambda \bar{V}_t - (c/\sigma) \gamma^+_t } f(X^+_t) d\bar{V}_t \right]\\
= & \mathbf{E}_x\left[ \int_0^\infty e^{- \lambda t - \lambda H \circ (\eta/\sigma) \gamma^+_t - (c/\sigma) \gamma^+_t } f(X^+_t) d(t + H \circ (\eta/\sigma) \gamma^+_t) \right]\\
= & I_1(x) + I_2(x)
\end{align*}
where
\begin{align*}
I_1(x) 
= & \mathbf{E}_x\left[ \int_0^\infty e^{- \lambda t - \lambda H \circ (\eta/\sigma) \gamma^+_t - (c/\sigma) \gamma^+_t } f(X^+_t) dt \right] \\
= & \mathbf{E}_x\left[ \int_0^\infty e^{- \lambda t - \lambda^\alpha (\eta/\sigma) \gamma^+_t - (c/\sigma) \gamma^+_t } f(X^+_t) dt \right]
\end{align*}
and
\begin{align*}
I_2(x) = \mathbf{E}_x\left[ \int_0^\infty e^{- \lambda t - \lambda H \circ (\eta/\sigma) \gamma^+_t - (c/\sigma) \gamma^+_t } f(X^+_t) d(H \circ (\eta/\sigma) \gamma^+_t) \right]
\end{align*}
can be obtained as
\begin{align*}
I_2 (x) = - \frac{1}{\lambda} \mathbf{E}_x\left[ \int_0^\infty N^{el}_t\, d M^\alpha_t \right] 
\end{align*}
where
\begin{align*}
N^{el}_t = f(X^+_t) \exp \left(-\lambda t - (c/\sigma) \gamma^+_t\right)
\end{align*}
and  
\begin{align*}
M^\alpha_t = \exp(-\lambda H \circ (\eta/\sigma) \gamma^+_t)
\end{align*}
is a multiplicative functional such that, from the independence of $H$,
\begin{align*}
\mathbf{E}_x[M^\alpha_t | X^+_t] = \exp(-\lambda^\alpha (\eta/\sigma) \gamma^+_t).
\end{align*}
In particular,
\begin{align*}
I_2(x)
= & - \frac{1}{\lambda} \mathbf{E}_x\left[ \int_0^\infty e^{- \lambda t - (c/\sigma) \gamma^+_t } f(X^+_t) \, d( e^{-\lambda H \circ (\eta/\sigma) \gamma^+_t }) \right]\\
= & - \frac{1}{\lambda} \mathbf{E}_x\left[ \int_0^\infty e^{- \lambda t - (c/\sigma) \gamma^+_t } f(X^+_t) \, d( e^{-\lambda^\alpha (\eta/\sigma) \gamma^+_t }) \right]\\
= & (\eta/\sigma) \frac{\lambda^\alpha}{\lambda} \mathbf{E}_x\left[ \int_0^\infty e^{- \lambda t -\lambda^\alpha (\eta/\sigma) \gamma^+_t - (c/\sigma) \gamma^+_t } f(X^+_t) \, d\gamma^+_t \right]
\end{align*}
Now, let us consider the time change
\begin{align}
T_{\lambda, t} = t + \frac{\lambda^\alpha}{\lambda }(\eta /\sigma) \gamma^+_t
\label{TimeChangeLambda}
\end{align}
as a new clock for the elastic Brownian motion written in terms of $(X^+, \gamma^+)$ and write
\begin{align}
\label{RbarIunodue}
\bar{R}_\lambda f(x) = & I_1(x) + I_2(x), \quad \lambda>0
\end{align}
as follows
\begin{align}
\bar{R}_\lambda f(x) = & \mathbf{E}_x \left[ \int_0^\infty f(X^+_t) \exp \left(- \lambda T_{\lambda, t}  - (c / \sigma)\, \gamma^+_t \right) dT_{\lambda, t} \right] \label{potTimeChangeTlambda}\\
= & \mathbf{E}_x \left[ \int_0^\infty e^{-\lambda t} f(X^+ \circ T^{-1}_{\lambda, t}) \exp \left(-(c/\sigma)\, \gamma^+ \circ T^{-1}_{\lambda, t} \right) dt \right], \quad \lambda>0. \notag
\end{align}
We see that 
\begin{align}
T_{\lambda, t} = \int_{\overline{\Omega}} \gamma^+_{t,z} m_\lambda(dz) \quad \textrm{with} \quad m_\lambda(dz) = dz + \frac{\lambda^\alpha}{\lambda} (\eta/\sigma) m_\partial(dz)
\label{Tlambda}
\end{align}
where the local time $\gamma^+_{t,z}$ is the jointly continuous local time of $X^+$ at $z \in \overline{\Omega}$, $t\geq 0$.\\

(Apart from the time change $T_{\lambda, t}$) Let us consider
\begin{align}
\bar{R}_\lambda^{el} f(x) = \mathbf{E}_x \left[ \int_0^\infty e^{-\lambda t} f(X^{el}_{\beta,t})dt \right], \quad \lambda >0
\label{IDgenELASTIC}
\end{align}
which is the resolvent operator for the elastic Brownian motion $X^{el}_{\beta} = \{X^{el}_{\beta,t}\}_{t \geq 0}$ with generator
\begin{align*}
(G^{el}_\beta, D(G^{el}_\beta)), \quad \beta >0 
\end{align*}
where $G^{el}_\beta = \Delta$ and
\begin{align*}
D(G^{el}_\beta) = \{\varphi, \Delta \varphi \in C(\overline{\Omega}), \, \varphi \in H^1(\Omega)\,:\, 0 = \partial_{\bf n} \varphi  + \beta\, \varphi |_{\partial \Omega}\}.
\end{align*}
We can write
\begin{align*}
I_1 = \bar{R}^{el}_\lambda f \quad \textrm{and} \quad I_2 = \bar{R}^{\partial}_\lambda f \quad \textrm{so that} \quad  \bar{R}_\lambda f = \bar{R}^{el}_\lambda f +  \bar{R}^{\partial}_\lambda f
\end{align*}
for $\beta = \lambda^\alpha (\eta/\sigma) + (c/\sigma)$. For all positive $\beta$, we have
\begin{equation*}
\left\lbrace
\begin{array}{ll}
\displaystyle \Delta \bar{R}^{el}_\lambda f = \lambda \bar{R}^{el}_\lambda f - f & \textrm{on}\; \Omega,\\ 
\\
\displaystyle \partial_{\bf n} \bar{R}^{el}_\lambda f + \beta \bar{R}^{el}_\lambda f = 0 & \textrm{on}\; \partial \Omega.
\end{array}
\right .
\end{equation*}
Moreover, from Lemma \ref{lemmaPap} with $c_1=\lambda$ and $c_2 = \lambda^\alpha (\eta/\sigma) + (c/\sigma)$, we get
\begin{equation*}
\left\lbrace
\begin{array}{ll}
\displaystyle \Delta \bar{R}^{\partial}_\lambda f = \lambda \bar{R}^{\partial}_\lambda f & \textrm{on}\; \Omega,\\ 
\\
\displaystyle \partial_{\bf n} \bar{R}^{\partial}_\lambda f + c_2\, \bar{R}^{\partial}_\lambda f = (\eta/\sigma) \frac{\lambda^\alpha}{\lambda} f & \textrm{on}\; \partial \Omega.
\end{array}
\right .
\end{equation*}
We can immediately see that, for all $\beta$,
\begin{align*}
\Delta (\bar{R}^{el}_\lambda f + \bar{R}^{\partial}_\lambda f) = \lambda (\bar{R}^{el}_\lambda f + \bar{R}^{\partial}_\lambda f) - f \quad \textrm{on} \quad \Omega
\end{align*}
and, for $\beta=c_2$,
\begin{align*}
\partial_{\bf n} (\bar{R}^{el}_\lambda f + \bar{R}^{\partial}_\lambda f) + (\lambda^\alpha (\eta/\sigma) + (c/\sigma)) (\bar{R}^{el}_\lambda f + \bar{R}^{\partial}_\lambda f) = (\eta/\sigma) \frac{\lambda^\alpha}{\lambda} f \quad \textrm{on} \quad \partial \Omega.
\end{align*}
We get
\begin{equation*}
\left\lbrace
\begin{array}{ll}
\displaystyle \lambda \bar{R}_\lambda f - f = \Delta \bar{R}_\lambda f & \textrm{on}\; \Omega,\\ 
\\
\displaystyle  (\eta/\sigma) \frac{\lambda^\alpha}{\lambda} \big( \lambda \bar{R}_\lambda f - f\big) = - \partial_{\bf n} \bar{R}_\lambda f - (c/\sigma) \bar{R}_\lambda f & \textrm{on}\; \partial \Omega.
\end{array}
\right .
\end{equation*}
Notice that $\bar{R}_\lambda^{el}f$ and $\bar{R}_\lambda^{\partial} f$ are unique continuous solutions for the associated problems. Thus, $\bar{R}_\lambda f$ belongs to the domain of the generator of an elastic sticky Brownian motion obtained by considering $T_{\lambda, t}$ with $t>0$, $\lambda>0$. \\

(Back to the time-change) We can write, for $\lambda >0$,
\begin{align*}
\bar{R}_\lambda f(x) = \mathbf{E}_x \left[ \int_0^\infty e^{-\lambda t} f(X^{st}_{\beta, t}) dt \right], \quad  \beta= (\eta/\sigma) \frac{\lambda^\alpha}{\lambda}
\end{align*}
where $X^{st}_\beta = \{X^{st}_{\beta, t}\}_{t \geq 0}$, for all $\beta>0$, has generator $G_\beta =\Delta$ with
\begin{align*}
D(G_\beta) = \left\{ \varphi, \Delta \varphi \in C(\overline{\Omega}), \varphi \in H^1(\Omega)\,:\, \beta (\Delta \varphi) |_{\partial \Omega} = - \partial_{\bf n} \varphi - (c/\sigma) \varphi |_{\partial \Omega} \right\}.
\end{align*} 
Let us write, for $\lambda >0$,
\begin{align*}
\bar{R}^{st}_\lambda f(x) = \mathbf{E}_x \left[ \int_0^\infty e^{-\lambda t} f(X^{st}_{\beta, t}) dt \right], \quad \beta >0
\end{align*}
where $X^{st}_{\beta, t}$ can be realized from $X^+$ via time change by considering
\begin{align*}
T^{st}_{\beta, t} = t + \beta \gamma^+_t 
\end{align*} 
and $\bar{R}^{st}_\lambda f = \bar{R}_\lambda f$ only if $\beta= (\eta/\sigma) (\lambda^\alpha /\lambda)$, that is $T^{st}_{\beta, t} = T_{\lambda, t}$. For all $\beta > 0$, the sticky boundary condition introduced by $T^{st}_{\beta, t}$ can be therefore written as (recall \eqref{SameArgs})
\begin{align}
\label{Condsurface-RH}
\beta \Delta \bar{R}^{st}_\lambda f = - \partial_{\bf n} \bar{R}^{st}_\lambda f - (c/\sigma)\, \bar{R}^{st}_\lambda f, \quad \textrm{on } \partial \Omega
\end{align}
or
\begin{align}
\label{concludeTu}
\beta  \big( \lambda \bar{R}^{st}_\lambda f - f \big) = - \partial_{\bf n} \bar{R}^{st}_\lambda f - (c/\sigma)\, \bar{R}^{st}_\lambda f , \quad \textrm{on } \partial \Omega
\end{align}
with $\bar{R}^{st}_\lambda f \in D(G_\beta)$. From \eqref{CD-LT} we conclude that \eqref{concludeTu} with $\beta = (\eta/\sigma) (\lambda^\alpha / \lambda)$, leads to
\begin{align}
\label{uTMPproofBC}
(\eta/\sigma) D^\alpha_t Tu(t,x) = - \partial_{\bf n} u(t,x) - (c/\sigma) u(t,x), \quad t>0,\; x \in \partial \Omega
\end{align}
as claimed. The condition \eqref{uTMPproofBC} gives a unique solution for the heat equation. In particular, the couple $(u, Tu)$ is the unique solution of \eqref{probXbar} by uniqueness of the Laplace inverse under continuity for $u$ on $\overline{\Omega}$. Indeed, \eqref{potTimeChangeTlambda}, with given positive constants $\eta,\sigma, c, \lambda$, gives the resolvent for a Feller-Wentzell semigroup.
\end{proof}

\begin{remark}
Focus on \eqref{TimeChangeLambda}. We notice that, for $\alpha=1$, for all $\lambda >0$, $T_{\lambda,t}= t + (\eta/\sigma) \gamma^+_t =:V_t$.
\end{remark}

\begin{remark}
Focus on \eqref{Tlambda}. We observe that
\begin{align*}
m_\lambda(dz) = \int_0^\infty e^{-\lambda t} \bigg( \delta_0(t) dz + \overline{\Pi}(t) (\eta/\sigma) m_\partial(dz) \bigg) dt, \quad \lambda>0
\end{align*}
where $\overline{\Pi}(t):=\Pi(t, \infty) = \int_t^\infty y^{-\alpha}dy$, $t>0$ is the tail of the L\'{e}vy measure of $H$. The subordinator $H$ has no role as $X^+ \notin \partial \Omega$. This suggests that the positive continuous functional
\begin{align*}
\bar{A}_t = (\eta/\sigma) \int_0^t \overline{\Pi}(t-s) d\gamma^+_s 
\end{align*}
can be considered in order to represent $\bar{X}$ via time change. Due to the important implication of this fact, such a discussion will be given in a separate work.
\end{remark}

\begin{remark}
Focus on the path integral
\begin{align*}
\mathbf{E}_x \left[ \int_0^\infty e^{-\lambda t} f(\bar{X}_t) d(\bar{V}_t - t) \right] 
= & \frac{\lambda^\alpha}{\lambda} \mathbf{E}_x \left[ \int_0^\infty e^{-\lambda t} f(\bar{X}_t) d\bar{\gamma}_t \right]\\
= & (\eta/\sigma) \frac{\lambda^\alpha}{\lambda} \mathbf{E}_x \left[ \int_0^\infty e^{-\lambda t - \lambda^\alpha (\eta/\sigma) \gamma^+_t -(c/\sigma) \gamma^+_t} f(X^+_t) d\gamma^+_t \right]
\end{align*}
which has been previously denoted by $I_2$ with $\bar{V}_t - t = H \circ (\eta/\sigma) \gamma^+_t$. We discuss below in Lemma \ref{thm:holdingHatBar} the process $V_t - t$ in terms of holding times. We underline that, with
\begin{align*}
c_1 = \lambda, \quad c_2 = \lambda^\alpha (\eta/\sigma) + (c/\sigma),
\end{align*}
we can apply Lemma \ref{lemmaPap} where $(\eta /\sigma) (\lambda^\alpha/\lambda) f$ must be considered in place of $f$. 
\end{remark}

\begin{theorem}
For all  $t>0$, there exists a continuous kernel $\bar{p}(t, \cdot, \cdot): \overline{\Omega} \times \overline{\Omega} \to \mathbb{R}$ such that
\begin{align*}
\mathbf{E}_x[f(\bar{X}_t)] = \int_{\overline{\Omega}} f(y)\, \bar{p}(t,x,y) m(dy), \quad f \in L^2(\overline{\Omega}, m).
\end{align*} 
\label{thm:pBarContinuity}
\end{theorem}
\begin{proof}
For $\eta,\sigma, c, \beta>0$, let us consider $G_\beta = \Delta$ with
\begin{align*}
D(G_\beta) = \left\{ \varphi, \Delta \varphi \in C(\overline{\Omega}), \varphi \in H^1(\Omega)\,:\, \beta (\Delta \varphi) |_{\partial \Omega} = - \sigma \partial_{\bf n} \varphi - c \varphi |_{\partial \Omega} \right\}
\end{align*}
Then $(G_\beta, D(G_\beta))$ generates an elastic sticky Brownian motion, say $X_\beta = \{X_{\beta, t}\}_{t\geq 0}$ which is a strong Markov process on $\Omega$ with $C_0$-semigroup on $C(\overline{\Omega})$, say $S_{\beta, t}$, with probabilistic representation
\begin{align*}
S_{\beta, t} f(x) = \mathbf{E}_x [ f(X_{\beta, t})].
\end{align*}
For $\beta=\eta \frac{\lambda^\alpha}{\lambda}$ and $\lambda>0$ we write
\begin{align}
\bar{R}_\lambda f(x) := \mathbf{E}_x \left[ \int_0^\infty e^{-\lambda t} f(\bar{X}_t) dt \right] = \mathbf{E}_x \left[ \int_0^\infty e^{-\lambda t} f(X_{\beta, t}) dt \right] .
\label{SlambdaPot1}
\end{align}
Recall {\bf S1} and {\bf S2} in Section \ref{sec:secX}. For the $C_0$-semigroup $S_{\beta, t}$ on $C(\overline{\Omega})$ we know that $S_{\beta, t} L^2(\Omega) \subset C(\overline{\Omega})$ and $S_{\beta, t} : L^2(\Omega) \to C(\overline{\Omega})$ is compact. In particular, there exists a continuous kernel $\varphi_\lambda$ on $\overline{\Omega}$ such that
\begin{align}
\int_0^\infty e^{-\lambda t} S_{\beta, t} f(x)\, dt = \int_{\overline{\Omega}} f(y) \varphi_\lambda(x,y) m(dy) \quad \forall\, \beta>0.
\label{SlambdaPot2}
\end{align} 
From \eqref{SlambdaPot1} and \eqref{SlambdaPot2}, for $\beta=\eta \frac{\lambda^\alpha}{\lambda}$ and $\lambda>0$, we get that $\varphi_\lambda,\, \bar{R}_\lambda f \in D(G_\beta)$ and
\begin{align*}
\bar{R}_\lambda f(x) = \int_{\overline{\Omega}} f(y) \varphi_\lambda(x,y) m(dy)
\end{align*} 
is continuous and bounded. That is, $\varphi_\lambda$ is a continuous kernel on $\overline{\Omega}$ for $\bar{R}_\lambda$. The claim follows.

\end{proof}

\section{The process $\bar{X}$}
\label{sec:barX}
We say that $\bar{X}$ is an elastic process meaning that 
$$\mathbf{E}_x[f(\bar{X}_t)] = \mathbf{E}_x[f(\bar{X}_t), t < \bar{\zeta}]$$ 
is written in terms of the lifetime $\bar{\zeta}$ and the multiplicative functional $\bar{M}_t$ for which
\begin{align}
\label{lawExpLTbar} 
\mathbf{P}(\bar{\zeta} > t | \bar{X}_t) = e^{-(c/\eta) \bar{\gamma}_t} =: \bar{M}_t \quad \textrm{with} \quad \bar{M}_t = M^+ \circ \bar{V}^{-1}_t.
\end{align}
We say that $\bar{X}$ is a sticky process meaning that $\{t\,:\, \bar{X}_t \in \partial \Omega\}$ is a set of positive Lebesgue measure {\color{\magenta} related to the sticky holding times $\{\bar{e}_i\}_i$ discussed in Section \ref{sec:comparisonXhatX} and the holding times discussed in Remark \ref{rmk:HTbarX}. Indeed, due to the jumps of $\bar{V}$,  
\begin{align*}
\mathbf{P}_x(\bar{X}_s = x, s \in [0,\mathfrak{T}))>0 \quad \textrm{if} \quad x \in \partial \Omega
\end{align*}
for some random time $\mathfrak{T}$. Thus, $\mathfrak{T}$ is an holding time.} 
 
From the previous section we know that
\begin{align*}
\mathbf{E}_x[f(\bar{X}_t)] = \int_{\overline{\Omega}} f(y) \bar{p}(t,x,y) m(dy), \quad f \in C(\overline{\Omega})
\end{align*}
writes
\begin{align*}
\mathbf{E}_x[f(\bar{X}_t)]= \mathbf{E}_x[f(\bar{X}_t), t < \bar{\zeta}] 
\end{align*}
where $\bar{\zeta}$ is written in terms of the local time $\bar{\gamma}$ according to \eqref{lawExpLTbar}. Moreover, a functional of $\bar{X}$ can be given as time changed functional of $X^+$, that is
\begin{align}
\label{equivgammaBAR} 
\bar{\gamma}_t = (\eta/\sigma) \gamma^+ \circ \bar{V}^{-1}_t.
\end{align}

\begin{remark}
($1$-dimensional case) Let us consider the cases:
\begin{itemize}
\item ($\alpha=1$ and $d=1$) The representation \eqref{repXelSticky} for the sticky Brownian motion on the half line agrees with the results obtained in \cite[Section 10]{ItoMcK}. Bass (\cite{BasSticky}) proved weak uniqueness (uniqueness in law) for the stochastic equation
\begin{align}
X_t = x + \int_0^t \mathbf{1}_{(X_s \neq 0)} dW_s, \quad t\geq 0
\label{SDEstickyBass}
\end{align}
among continuous local martingales with speed measure
\begin{align}
m(dx) = dx + (\eta/\sigma) \delta_0(dx).
\label{mMeasure1dim}
\end{align}
From \eqref{mMeasure1dim} and the jointly continuous local time $\gamma^+_{t,z}$ (at level $\{z\}$ up to time $t$) of $X^+$ on $\mathbb{R}$ for $z \in \mathbb{R}$ and $t>0$, we have the well-known fact
\begin{align}
\int_{\mathbb{R}} \gamma^+_{t,z} m(dz) = t + (\eta/\sigma) \gamma^+_t=: V_t
\label{speedChangeV}
\end{align}
from which we get $X$ on $\mathbb{R}$ with sticky point $\{0\}$ via speed measure change for $X^+$. 
\item ($\alpha \neq 1$ and $d=1$) The NLBVP in dimension $d=1$ has been investigated in \cite{NLBVP-ItheHline}. In case of non-local dynamic boundary condition the process behaves like a Brownian motion far from the boundary according to \eqref{SDEstickyBass}. We still have a continuous solution associated with \eqref{mMeasure1dim}. However, $\bar{X}$ is not a martingale. Thus, according to \eqref{SDEstickyBass} and \eqref{mMeasure1dim} the result by Bass says that $X$ has exponential holding times ensuring the Markov nature of $X$. For general holding times we should therefore impose further conditions.
\end{itemize}
\end{remark}

Let us consider the $\lambda$-potential
\begin{align*}
\bar{R}_\lambda f(x) := \mathbf{E}_x \left[ \int_0^\infty e^{-\lambda t}f(\bar{X}_t) dt \right], \quad \lambda>0, \quad x \in \overline{\Omega}
\end{align*}
and the first hitting time $\bar{\tau}_{\partial \Omega} := \inf\{t \geq 0\,:\, \bar{X}_t \in \partial \Omega\}$. We underline that
\begin{align*}
\bar{R}_\lambda f(x) 
= & \mathbf{E}_x \left[ \int_0^\infty e^{-\lambda t}f(X^\dagger_t) dt \right] + \mathbf{E}_x \left[ e^{-\lambda \bar{\tau}_{\partial \Omega}} \mathbf{E}_{\bar{X}_{\bar{\tau}_{\partial \Omega}}} \left[ \int_0^\infty e^{-\lambda t} f(\bar{X}_t) dt \right] \right],
\end{align*}
that is $\bar{R}_\lambda f(x) = R^\dagger_\lambda f(x) + \bar{R}^\partial_\lambda f(x)$ with $T \bar{R}_\lambda f = T \bar{R}_\lambda^\partial f$ and 
\begin{align*}
\bar{R}_\lambda^\partial f(x)  
= & \int_0^\infty  e^{-\lambda t}  \int_{\partial \Omega} \bar{R}_\lambda f(y) \, \mathbf{P}_x(\bar{X}_{\bar{\tau}_{\partial \Omega}} \in dy, \bar{\tau}_{\partial \Omega} \in dt)
\end{align*}
for which we have
\begin{align*}
\Delta \bar{R}_\lambda f = \Delta \big( R^\dagger_\lambda f + \bar{R}^\partial_\lambda f \big), \quad \textrm{and} \quad \lambda \bar{R}_\lambda f - f = \lambda \big( R^\dagger_\lambda f + \bar{R}^\partial_\lambda f \big) - f, \quad \textrm{in } \overline{\Omega}.
\end{align*}
In particular,
\begin{align}
\lambda \bar{R}_\lambda f - f = \big( \lambda  R^\dagger_\lambda f -f \big) + \lambda \bar{R}^\partial_\lambda f \quad \textrm{on } \Omega
\end{align}
and
\begin{align}
\lambda \bar{R}_\lambda f - f = \big( \lambda  R^\partial_\lambda f -f \big) \quad \textrm{on } \partial \Omega
\end{align}
provide an interesting reading of the dynamic (non-local) boundary condition.\\

\begin{remark}
(Occupation measures)   For $\Lambda \subseteq \overline{\Omega}$, consider the occupation measure
\begin{align*}
\mu_{\bar{X}} (x, \Lambda^\prime) = \mathbf{E}_x \left[ \int_0^{\bar{\tau}(\Lambda)} \mathbf{1}_{\Lambda^\prime}(\bar{X}_s) ds \right], \quad x \in \Lambda, \quad \Lambda^\prime \subset \Lambda
\end{align*}
where $\bar{\tau}(\Lambda) = \inf\{t\,:\, \bar{X}_t \notin \Lambda\}$. Here we only underline the interesting role of the "trapping" boundary for $X^+$ under the time change $\bar{V}$. Thus, we only focus on 
\begin{align*}
\mu_{\bar{X}}(x, \Lambda) = \mathbf{E}_x [\bar{\tau}(\Lambda)] \leq \bar{R}_0 \mathbf{1}_\Lambda (x) := \mathbf{E}_x \left[ \int_0^\infty \mathbf{1}_\Lambda(\bar{X}_s) ds \right], \quad x \in \Lambda
\end{align*}
and the fact that
\begin{align*}
\bar{R}_0 \mathbf{1}_\Lambda (x) = \lim_{\lambda \to 0} \bar{R}_\lambda \mathbf{1}_{\Lambda}(x), \quad x \in \Lambda.
\end{align*} 
First we observe that
\begin{align*}
\bar{R}_\lambda \mathbf{1}_{\Lambda}(x) = \bar{R}_\lambda \mathbf{1}_{\Lambda \cap \Omega}(x) + \bar{R}_\lambda \mathbf{1}_{\Lambda \cap \partial \Omega}(x)
\end{align*}
where
\begin{align*}
\bar{R}_\lambda \mathbf{1}_{\Lambda \cap \Omega}(x) = \mathbf{E}_x\left[ \int_0^\infty e^{- \lambda t - \lambda^\alpha (\eta/\sigma) \gamma^+_t - (c/\sigma) \gamma^+_t } \mathbf{1}_{\Lambda \cap \Omega}(X^+_t) dt \right]
\end{align*}
and
\begin{align*}
\bar{R}_\lambda \mathbf{1}_{\Lambda \cap \partial \Omega}(x) = (\eta/\sigma) \frac{\lambda^\alpha}{\lambda} \mathbf{E}_x\left[ \int_0^\infty e^{- \lambda t -\lambda^\alpha (\eta/\sigma) \gamma^+_t - (c/\sigma) \gamma^+_t } \mathbf{1}_{\Lambda \cap \partial \Omega}(X^+_t) \, d\gamma^+_t \right]
\end{align*}
can be respectively obtained from $I_1$ and $I_2$ in \eqref{RbarIunodue}. We have that
\begin{align*}
\bar{R}_\lambda \mathbf{1}_{\Lambda \cap \Omega}(x) = \bar{g}_{1,\lambda}(x), \quad \bar{R}_\lambda \mathbf{1}_{\Lambda \cap \partial \Omega}(x) = (\eta/\sigma) \frac{\lambda^\alpha}{\lambda} \bar{g}_{2,\lambda}(x)
\end{align*}
where $\bar{g}_{1,\lambda}, \bar{g}_{2,\lambda}$ for $\lambda>0$ are continuous and bounded. Indeed, 
\begin{align*}
\tilde{g}_{1,\lambda}(x) \leq \mathbf{E}_x\left[ \int_0^\infty e^{-\lambda t - \lambda^\alpha (\eta/\sigma)\gamma^+_t} \mathbf{1}_{\overline{\Omega}}(X^{el}_t) dt \right] = \mathbf{E}_x\left[ \int_0^{\zeta^{el}} e^{-\lambda t - \lambda^\alpha (\eta/\sigma)\gamma^+_t} dt \right] < \infty
\end{align*}
where $\zeta^{el}$ is the lifetime of the elastic Brownian motion $X^{el}$. It is well known that $\zeta^{el} = \inf\{t\,:\, \gamma^+_t \geq \chi\}$ where $\chi$ is an exponential random variable. In our case, 
\begin{align*}
\mathbf{P}(\chi >t) = e^{-(c/\sigma) t}
\end{align*}
according to \eqref{lawExpLT} and \eqref{equivgammaBAR}. Moreover,
\begin{align*}
\tilde{g}_{2,\lambda}(x) \leq \mathbf{E}_x\left[ \int_0^\infty e^{-\lambda t - \lambda^\alpha (\eta/\sigma)\gamma^+_t - (c/\sigma) \gamma^+_t} \mathbf{1}_{\partial \Omega}(X^+_t) d\gamma^+_t \right]
\end{align*}
which is $g$ in point $iii)$ of Lemma \ref{lemmaPap} with $c_1=\lambda>0$, $c_2 =\lambda^\alpha (\eta/\sigma) + (c/\sigma)>0$ and $f=\mathbf{1}$. Since $\overline{\Omega}$ is compact and connected, then there exists a complete orthonormal basis consisting of Robin eigenfunctions of $\Delta$ with an increasing sequence of strictly positive eigenvalues (\cite[page 8]{Chav84}). This implies that $\mathfrak{g}$ (in Lemma \ref{lemmaPap}) is finite as stated in \cite[Theorem 3.7]{Pap90}. Thus, we get $\tilde{g}_{2,\lambda} < \infty$ for $\lambda>0$. \\

As $\lambda \to 0$, for $x \in \Lambda$ we get that
\begin{align*}
\tilde{g}_{1, 0}(x) \leq \mathbf{E}_x \left[ \int_0^{\zeta^{el}} dt \right]
\end{align*}
and
\begin{align*}
\tilde{g}_{2, 0}(x) \leq \mathbf{E}_x\left[ \int_0^\infty e^{-(c/\sigma)\gamma^+_t} \mathbf{1}_{\partial \Omega} (X^+_t) d\gamma^+_t \right] = \mathbf{E}_x\left[ \int_0^\infty e^{-(c/\sigma)\gamma^+_t} d\gamma^+_t \right]  = \mathbf{E}_x\left[ \int_0^{\zeta^{el}} d\gamma^+_t \right].
\end{align*}
The last equality is obtained by taking into account the multiplicative functional 
\begin{align*}
M^+_t= e^{-(c/\sigma)\gamma^+_t} = \mathbf{E}[\, \mathbf{1}_{(\,t\, <\, \zeta^{el}\,)} \,| \, X^+_t].
\end{align*}
By Stieltjes integration we have
\begin{align*}
M^+_\tau = M^+_0 - (c/\sigma) \int_0^\tau M^+_t d\gamma^+_t, \quad \tau >0
\end{align*} 
and
\begin{align*}
\mathbf{E}_x\left[ \int_0^\infty e^{-(c/\sigma)\gamma^+_t} d\gamma^+_t \right] = (\sigma / c).
\end{align*} 
By definition of $\zeta^{el}$, we know that $\gamma^+ \circ \zeta^{el} = \chi$ and 
\begin{align*}
\mathbf{E}_x\left[ \int_0^{\zeta^{el}} d\gamma^+_t \right] = \mathbf{E}[\chi] = (\sigma/c).
\end{align*}
Since
\begin{align}
\mathbf{E}_x[\zeta^{el}] < \infty, \quad x \in \Lambda  \quad \textrm{and} \quad \mathbf{E}_x[\gamma^+ \circ \zeta^{el}] < \infty \quad x \in \Lambda,
\label{OccMeasA}
\end{align}
then 
\begin{align*}
\tilde{g}_{1, 0}(x) \leq \mathbf{E}_x[\zeta^{el}] \quad \textrm{implies} \quad \bar{R}_0 \mathbf{1}_{\Lambda \cap \Omega}(x) < \infty
\end{align*}
and
\begin{align*}
\tilde{g}_{2, 0}(x) \leq \mathbf{E}_x[\gamma^+ \circ \zeta^{el}] \quad \textrm{implies} \quad \bar{R}_0 \mathbf{1}_{\Lambda \cap \partial \Omega}(x) < \infty
\end{align*}
Under \eqref{OccMeasA}, 
\begin{align*}
\{X^{el} \circ V^{-1}_t,\, V^{-1}_t < \zeta^{el}\} \quad \textrm{which equals in law} \quad \{X_t,\, t < \zeta\}
\end{align*}
started at $X^{el}_0 \in \Lambda \subseteq \overline{\Omega}$ spends on $\overline{\Omega}$ a finite (mean) time with
\begin{align*}
\mathbf{E}_x[\zeta^{el}] + (\eta/\sigma) \mathbf{E}_x[\gamma^+ \circ \zeta^{el}] < \infty \quad x \in \Lambda.
\end{align*}
This corresponds to the case $\alpha=1$ and coincides with $\mathbf{E}_x[V \circ \zeta^{el}]$. That is,
\begin{align*}
\mathbf{E}_x \left[\int_0^\infty \mathbf{1}_\Lambda (X_t) dt \right] \leq \mathbf{E}_x \left[\int_0^\infty \mathbf{1}_{\overline{\Omega}} (X_t) dt \right] = \int_0^\infty \mathbf{P}_x(t< V \circ \zeta^{el}) dt = \mathbf{E}_x[V \circ \zeta^{el}].
\end{align*}
We have that, under \eqref{ASShtINTRO}, 
\begin{align*}
\mathbf{E}_x[\bar{\tau}(\Lambda \cap \Omega)] \leq \bar{R}_0 \mathbf{1}_{\Lambda \cap \Omega}(x) < \infty \quad \textrm{for all} \quad \alpha \in (0,1]
\end{align*}
whereas, due to the ratio $\lambda^\alpha / \lambda$, 
\begin{align}
\mathbf{E}_x[\bar{\tau}(\Lambda \cap \partial \Omega)] \leq \bar{R}_0 \mathbf{1}_{\Lambda \cap \partial \Omega}(x)  \left\lbrace
\begin{array}{ll}
<\infty, & \alpha =1,\\
= \infty, & \alpha \in (0,1).
\end{array} 
\right.
\label{occupMeasXbar}
\end{align}
Thus, for $\alpha \in (0,1)$, only in case $\Lambda \cap \partial \Omega = \emptyset$ do we have $\mathbf{E}_x[\bar{\tau}(\Lambda)] < \infty$ for all $x \in \Lambda$. This suggests a trap effect on the boundary for the Brownian motion.  
\end{remark}

\begin{remark}
($1$-dimensional case) Here we consider the problem 
\begin{align}
\label{FBVP-realLine}
\left\lbrace
\begin{array}{ll}
\displaystyle \frac{\partial u}{\partial t}(t,x) = u^{\prime \prime}(t,x), \quad t>0, \, x >0,\\
\\
\displaystyle \eta D^\alpha_t u(t,0) = \sigma u^\prime(t,0) - c\, u(t, 0), \quad t>0,\\
\\
\displaystyle u(0,x) = f(x), \quad x \geq 0, \quad f \in C_b[0, \infty),
\end{array}
\right.
\end{align}
for which we maintain the notation $\bar{X}$ and $X$ for the involved processes, now on $[0, \infty)$.  In particular,
\begin{align*}
u(t,x) = \mathbf{E}_x[f(\bar{X}_t)] \quad \textrm{and} \quad \bar{X}=X \quad \textrm{if} \quad \alpha=1.
\end{align*}
For $\Lambda \subseteq [0, \infty)$, we write
\begin{align*}
\bar{\tau}(\Lambda) = \inf\{t\,:\, \bar{X}_t \notin \Lambda \}, \quad \tau(\Lambda) = \inf\{t\,:\, X_t \notin \Lambda \}.
\end{align*}
Let $\Lambda$ be an interval, $m(\Lambda)<\infty$ where $m$ is that in \eqref{mMeasure1dim}. We recall the following result (\cite[Lemma 3]{NLBVP-ItheHline}):
\begin{itemize}
\item [i)] If $\Lambda \subset (0, \infty)$, then 
\begin{align*}
\forall\, \alpha \in (0,1], \quad \forall\, x \in \Lambda, \quad \mathbf{E}_x[\bar{\tau}(\Lambda) ]< \infty.
\end{align*} 
\item [ii)] Otherwise, 
\begin{align*}
\forall\, \alpha \in (0,1), \quad \forall\, x \in \Lambda, \quad \mathbf{E}_x[\bar{\tau}(\Lambda) ] = \infty
\end{align*}
and
\begin{align*}
\quad \forall\, x \in \Lambda, \quad \mathbf{E}_x[\tau(\Lambda) ] < \infty.
\end{align*}
\end{itemize} 
Thus, $\Lambda$ plays the role of a "trap set" only if $\Lambda$ includes the sticky point $\{0\}$. On $(0, \infty)$, the process $\bar{X}$ behaves like a Brownian motion. For $d=1$, we have the explicit formula
\begin{align*}
\bar{R}_\lambda f(x) =  R^\dagger_\lambda f(x) + \frac{e^{-x \sqrt{\lambda}}}{c + \eta \lambda^\alpha + \sigma \sqrt{\lambda}} \left( \sigma \int_0^\infty e^{-y \sqrt{\lambda}} f(y) dy + \eta \frac{\lambda^\alpha}{\lambda}  f(0) \right), \quad \lambda>0
\end{align*}
where $R^\dagger_\lambda$ is the resolvent associated with the killed process $X^\dagger$ on $(0, \infty)$ as defined in Section \ref{sec:intro} in case of bounded domains. In particular, $\{X^\dagger_t,\, t < \tau_0\}$ where $\tau_0 = \inf\{t\,:\, X^\dagger_t=0\}$ is the lifetime of the process. The mean value $\mathbf{E}_x[\bar{\tau}(\Lambda)]$ can be explicitly evaluated as well as the limit $\bar{R}_0 \mathbf{1}_\Lambda (x)$. We only underline that, for $x \in \Lambda =[0, \epsilon]$ with $\epsilon>0$, that is $\Lambda$ includes $\{0\}$, we have 
\begin{align*}
\bar{R}_0 \mathbf{1}_{[0,\epsilon]}(x) = R^\dagger_0 \mathbf{1}_{[0, \epsilon]}(x)  + (\sigma/c) \epsilon + (\eta/c) \lim_{\lambda \to 0} \frac{\lambda^\alpha}{\lambda}
\end{align*}
with $R^\dagger_0 \mathbf{1}_{[0, \epsilon]}(x) < \infty$. For $x \in \Lambda =[a, \epsilon]$ with $\epsilon > a >0$, we have
\begin{align*}
\bar{R}_0 \mathbf{1}_{[a,\epsilon]}(x) = R^\dagger_0 \mathbf{1}_{[a, \epsilon]}(x)  + (\sigma/c) (\epsilon - a)
\end{align*}
with $R^\dagger_0 \mathbf{1}_{[a, \epsilon]}(x) < \infty$. As we can see, in case $\alpha=1$, $\bar{R}_0 \mathbf{1}_{[a,\epsilon]}(x) < \infty$ for $a\geq 0$. Let us consider now the reflected Brownian motion $X^+$ on $[0, \infty)$ and the killed reflected Brownian motion on $[0, \epsilon)$. We recall that, for $\tau^+_\epsilon = \inf\{t\,:\, X^+_t = \epsilon\}$ with $X^+_0=0$, the composition $\gamma^+ \circ \tau^+_\epsilon$ is an exponential r.v. with parameter $\epsilon >0$. For the killed process $\{X^\dagger_t\}_{t\geq 0} := \{X^+ \circ V^{-1}_t,\, V^{-1}_t < \tau^+_\epsilon \}_{t \geq 0}$, the expected lifetime on $[0, \epsilon)$ is given by
\begin{align*}
\mathbf{E}_x[\tau_\epsilon] + (\eta/\sigma) \mathbf{E}_x[\gamma^+ \circ \tau^+_\epsilon].
\end{align*}
For details on this remark we refer to \cite{NLBVP-ItheHline}.
\end{remark}

{\color{\blue}
\begin{theorem}
($1$-dimensional case) For $\alpha = 1$, we have that 
\begin{align*}
\mathbf{P}_0(\bar{X}_t = 0) = \mathbf{P}_0(\chi > \gamma^+_t) = \mathbf{E}_0 \left[ e^{-(\sigma / \eta) \gamma^+_t} \right], \quad t \geq 0
\end{align*}
where $\chi \perp X^+$ with $\mathbf{P}(\chi > t) = e^{-(\sigma/\eta) t}$ and $\gamma^+$ is the local time of the reflected Brownian motion $X^+$ on $[0, \infty)$. 
\label{thm:threshold} 
\end{theorem}
\begin{proof}
For $\alpha=1$ we write $X$ in place of $\bar{X}$. Observe that, from $\bar{R}_\lambda f$ above, 
\begin{align*}
\lim_{\epsilon \to 0} \int_0^\infty e^{-\lambda t} \mathbf{P}_0(X_t \in [0, \epsilon]) dt = \frac{\eta}{\eta \lambda + \sigma \sqrt{\lambda}} , \quad \lambda >0
\end{align*}
which coincides with 
\begin{align*}
\int_0^\infty e^{-\lambda t} \mathbf{P}_0(\chi > \gamma^+_t) dt, \quad \lambda >0
\end{align*}
where $\chi \perp \gamma^+$.
\end{proof}

The occupation time of the boundary point $\{0\}$ is regulated by an exponential threshold for the local time of the reflected Brownian motion. Recall that $\bar{X}$, as well as \(X\) in the case $\alpha=1$, represents a motion along the path of $X^{+}$; given that path, the sticky time change forces the process to slow down at (or near) the boundary point $\{0\}$. Observe that $\mathbf{P}_0(\bar{X}_t=0)$ for $\alpha \in (0,1)$ does not distinguish between delay at $\{0\}$ (like holding times) and infinitely many crossings of $\{0\}$ up to time $t$ (like sticky holding times). 

}

\section{Sticky holding times: Discussion on the processes $X$, $\hat{X}$, $\bar{X}$}
\label{sec:comparisonXhatX}

{\color{\blue}
The sticky Brownian motion in a smooth domain can be interpreted through two equivalent paradigms. From the trajectory point of view, the process constantly hits the boundary along the normal direction (if we assume no lateral diffusion) and these high-frequency interactions create a cumulative delay. For example, for  the 1-dimensional motion in Theorem \ref{thm:threshold} the sticky holding time (the occupation time of the sticky point) is referred to the time the process spends bouncing at the same point. From a time-change perspective, the paths of the process $X$ are (geometrically) identical to the paths of a reflected Brownian motion. The process leaves a point instantaneously and the sticky behaviour emerges from slowing down the temporal flow as the process accumulates local time on the boundary. Thus, the positive (Lebesgue) measure of the time on the boundary for sticky processes can be determined in terms of competitive mechanism between infinitely many crossing and instantaneous reflection. However, despite the obscure dynamic of $X$ on the boundary, the processes $\hat{X}$ and $\bar{X}$ include a further behaviour which appears clearly from the time change. Indeed, in both cases, the jumps of the (independent) subordinator $H$ may either delay the frenetic behaviour of $X^+$ as discussed above for $X$ or completely hold the process in a point of the boundary for a random time (as soon as a jump of $H$ occurs).\\
}

\subsection{On the process $X$}
Recall the definition \eqref{CDderDef}. The special case $\alpha=1$ in our Theorem \ref{thm:MAINI} gives
\begin{align*}
u(t,x) = \mathbf{E}_x[f(X_t)]
\end{align*} 
where $X$ has generator $(G,D(G))$ and $u$ solves \eqref{probX}. Indeed, 
\begin{align*}
H_t = t \quad \textrm{implies} \quad \bar{V}_t = V_t:=t + (\eta/\sigma) \gamma^+_t
\end{align*} 
and we obtain the representation 
\begin{align}
\label{repXelSticky}
\mathbf{E}_x[f(X_t)] = \mathbf{E}_x[f(X^+ \circ V^{-1}_t)\, M^+ \circ V^{-1}_t], \quad t>0,\; x \in \overline{\Omega}
\end{align}
where $M^+ \circ V^{-1}_t$ is the multiplicative functional $M_t = \exp\left( - (c/\eta) \gamma_t\right)$  associated with the elastic condition introduced in \eqref{lawExpLT}. Indeed, the boundary local time $\gamma$ of $X$ can be written as the composition 
\begin{align}
\label{equivgamma}
\gamma_t = (\eta/\sigma)\gamma^+ \circ V^{-1}_t, \quad t>0.
\end{align}
Notice that the representation \eqref{repXelSticky} on a bounded domain of $\mathbb{R}^d$ is new as well as the representation \eqref{MINrepXbar} for $\alpha \in (0,1)$. 

\begin{remark}
\label{remarkPAPalternative}
(Concerning the proof of Theorem \ref{thm:alternativPap}) It is worth highlighting that a nice proof can be also obtained from Papanicolau's work. We use the fact that (Lemma \ref{lemmaBluGet})
\begin{align*}
\int_{(0, \infty)} g(t) f(X^+ \circ V^{-1}_t) d\gamma^+ \circ V^{-1}_t = \int_{(0, \infty)} g(V_t) f(X^+_t) d\gamma^+_t
\end{align*}
for $g,f$ regular enough (bounded and measurable for instance). From \eqref{repXelSticky} and \eqref{equivgamma} we write
\begin{align*}
\mathbf{E}_x\left[ \int_0^\infty e^{-\lambda t} f(X_t) d\gamma_t \right]
= & (\eta /\sigma) \mathbf{E}_x\left[ \int_0^\infty e^{-\lambda t - \lambda (\eta /\sigma) \gamma^+_t - (c/\sigma) \gamma^+_t} f(X^+_t) d\gamma^+_t \right],
\end{align*}
and from Lemma \ref{lemmaequivPCAFhatNOhat} we get
\begin{align}
\label{TMPLemmavarpi}
\mathbf{E}_x\left[\int_0^\infty e^{-\lambda t} f(\hat{X}_t) d\hat{\gamma}_t \right] = (\eta /\sigma) \frac{\lambda^\alpha}{\lambda}  \mathbf{E}_x\left[ \int_0^\infty e^{-c_1 t - c_2 \gamma^+_t} f(X^+_t) d\gamma^+_t \right] = \tilde{\mathsf{w}}_2(\lambda, x)
\end{align}
where $c_1=\lambda^\alpha$ and $c_2=\lambda^\alpha (\eta /\sigma) + (c/\sigma)$. According to Lemma \ref{lemmaPap}, $\tilde{\mathsf{w}}_2$ solves \eqref{prob31} with boundary condition
\begin{align*}
\partial_{\bf n} \tilde{\mathsf{w}}_2(\lambda, x) + c_2 \tilde{\mathsf{w}}_2(\lambda, x) = (\eta/\sigma) \frac{\lambda^\alpha}{\lambda} f(x) \quad x\in \partial \Omega, \; \lambda>0, 
\end{align*}
that is
\begin{align}
\label{conLTvarpi}
\partial_{\bf n} \tilde{\mathsf{w}}_2(\lambda, x) + (c/\sigma) \tilde{\mathsf{w}}_2(\lambda, x) + (\eta/\sigma) \bigg( \lambda^\alpha \tilde{\mathsf{w}}_2(\lambda, x)  - \frac{\lambda^\alpha}{\lambda} f(x) \bigg) =0, \quad x \in \partial \Omega, \; \lambda>0. 
\end{align}
Thus, $\tilde{\mathsf{w}}_2(\lambda, x)$ solves \eqref{pr2wHatLaw}. 

Moreover, we underline the following fact. Focus on the proof of Theorem \ref{thm:MAINI}. The potential $\tilde{\mathsf{w}}_2(\lambda, x)$ can be treated as $\bar{R}^\partial_\lambda f(x)$ whereas, there is no connection between the potential $\tilde{\mathsf{w}}_1(\lambda, x)$ and the resolvent $\bar{R}^{el}_\lambda f(x)$.
\end{remark}

\vspace{.5cm}

{\color{\magenta}The process $X$ moves along the path of $X^+$ as well as the processes $\hat{X}$ and $\bar{X}$.} In the $1$-dimensional case, $\Omega=[0, a)$ for example, the zero set $\{0\leq t < \infty :\, X^+_t=0\}$ has no isolated points. In the $2$-dimensional case for example, every point is almost surely not visited but there exists a random (big and uncountable) set of points visited infinitely often. Here, we focus on smooth boundaries of bounded domains $\Omega \subset \mathbb{R}^d$. As in \cite{HSU86}, we may consider $r_t:=\sup\{s\geq 0\,:\, \gamma^+_s \leq t\}$ such that $\gamma^+ \circ r_t = t$. For a given $t$, $r_t$ is a stopping time for $X^+$. Then, we introduce the boundary process $\mathfrak{X}_t := X^+ \circ r_t$. That is the boundary trace process of $X^+$. The set of times $J^+=\{t \geq 0\,:\, \mathfrak{X}_{t-} \neq \mathfrak{X}_t\}$ gives the times in which $\mathfrak{X}_t$ has a jump. The local time $\gamma^+_t$ is constant only for the excursions of $X^+$ on the interior of the domain and $\mathfrak{X}_t$ is a pure jump process. We write $J^+_t = J^+ \cap (0,t]$ and recall that a.s. $J^+$ is a dense countable subset of $(0,\infty)$. This can be associated with the countable jumps of the (Cauchy) trace process. 

{\color{\magenta} The process $X$ can be obtained via time change by considering $V_t = t + (\eta/\sigma) \gamma^+_t$ for which we have
\begin{align*}
V\circ r_t = r_t + (\eta/\sigma) t \quad \textrm{is a subordinator with drift $(\eta/\sigma)$}.
\end{align*}
The jumps of $r_t$ give the time the process $X^+$ spends on the interior $\Omega$ and $(\eta/\sigma) t$ is the time on the boundary (w.r. to the clock $\gamma^+$). Let us consider the $\epsilon$-neighbourhood $\Lambda_\epsilon = \{x \in \overline{\Omega}\,:\, d(x, \partial \Omega) < \epsilon\}$ and the exit time
\begin{align*}
\tau^+_\epsilon = \inf\{ t\,:\, X^+_t \notin \Lambda_\epsilon\, | \, X^+_0 \in \partial \Omega \}, \quad \epsilon>0.
\end{align*}
If follows that      
\begin{align*}
V \circ \tau^+_\epsilon = \tau^+_\epsilon + (\eta/\sigma) \gamma^+ \circ \tau^+_\epsilon =: \tau^+_\epsilon + e
\end{align*}
where $e$ is the extra time accumulated on $\partial \Omega$ by $X$ on $\Lambda_\epsilon$ up to time $\tau^+_\epsilon$. From $e$ we can describe the extra times $\hat{e}$ and $\bar{e}$ respectively for $\hat{X}$ and $\bar{X}$. In particular, analogously to the set $J^+$ for $X^+$, we can introduce the sets
\begin{align*}
J_t = J \cap (0, t], \quad \hat{J}_t = \hat{J} \cap (0, t], \quad \bar{J}_t = \bar{J} \cap (0, t]
\end{align*}
for the processes $X$, $\hat{X}$ and $\bar{X}$. Denote by $J^\delta_t$, $\hat{J}^\delta_t$ and $\bar{J}^\delta_t$ the corresponding sets of inter-times between jumps. Then, we are able to identify the sets $J^\sharp_t$, $\hat{J}^\sharp_t$ and $\bar{J}^\sharp_t$ such that
\begin{align}
\sum_{ e \in J^\delta_t} e=\sum_{ j \in J^\sharp_t} e_j, \quad \sum_{\hat{e} \in \hat{J}^\delta_t} \hat{e} = \sum_{j \in \hat{J}^\sharp_t} \hat{e}_j \quad \textrm{and} \quad \sum_{\bar{e} \in \bar{J}^\delta_t} \bar{e} = \sum_{j \in \bar{J}^\sharp_t} \bar{e}_j
\label{totalAmount}
\end{align}
provide the amount of time the processes $X$, $\hat{X}$ and $\bar{X}$ respectively spend on the $\epsilon$-neighbourhood $\Lambda_\epsilon$ of the boundary $\partial \Omega$ up to $t>0$. Since $\mathbf{P}(e >0) = 1$ for $\epsilon>0$, the sets $J^\sharp$, $\hat{J}^\sharp$ and $\bar{J}^\sharp$ can be identified as the set $\mathbb{N}_0 := \mathbb{N} \cup \{0\}$.

\begin{definition}
The sequence $\{e_i\}_i$ of random variables is the sequence of sticky holding times for $X$. That is, $e_i$ gives the time the process $X$ (started from $\partial \Omega$) spends on $\partial \Omega$ with the visit $i$ of the $\epsilon$-neighbourhood $\Lambda_\epsilon$. 
\label{def:HTX}
\end{definition}

For example, if $X_0 \in \partial \Omega$, then 
\begin{align*}
e_0 = meas\{t < \tau^+_\epsilon\,:\, X_t \in \partial \Omega\}
\end{align*}
and
\begin{align*}
\inf\{t\,:\, X_t \in \overline{\Omega} \setminus \Lambda_\epsilon \, | \, X_0 \in \partial \Omega \} = \tau^+_\epsilon + e_0, \quad \epsilon > 0
\end{align*} 
is the first hitting time of the interior $\overline{\Omega} \setminus \Lambda_\epsilon$, that is the sojourn time of $X$ on $\Lambda_\epsilon$
\begin{align}
\int_0^{\tau^+_\epsilon} dV_t := \int_0^{\tau^+_\epsilon} dt + (\eta/\sigma) \int_0^{\tau^+_\epsilon} d\gamma^+_t, \quad \epsilon>0.
\label{occLTe}
\end{align}
Observe that $\tau^+_\epsilon$ is related with the jumps of $r_t$. As $\epsilon \approx 0$ we exclude only infinitesimal excursions of $X^+$ on the interior $\overline{\Omega} \setminus \partial \Omega$. In the context of Brownian excursions, we can also consider the set 
\begin{align*}
\mathfrak{E} = \{ \textit{excursions from $\partial \Omega$ with lifiteme} > \tau^+_\epsilon \}
\end{align*}
containing all excursions that reach a distance greater than $\epsilon$ from the boundary $\partial \Omega$. Observe that, conditioned on the boundary process, the individual excursions are independent and a point process of excursion of $X^+$ is well-defined. Thus, we get  
\begin{align*}
\mathbf{P}_x(\textrm{no excursion outside $\Lambda_\epsilon$ in $(0, t]$ }) = \mathbf{P}_x(\gamma^+ \circ \tau^+_\epsilon > t),\quad t>0,\;  x \in \partial \Omega.
\end{align*}
Under some regularity assumption for $\Omega$, the point process taking values in the space of excursion is a Poisson point process (see for example Remark \ref{Rmk:regularityDomainPPP} below) as for the one-dimensional case. The It\^{o} measure $n(\mathfrak{E})$ is finite for $\epsilon>0$ and the associated Poisson point process gives 
\begin{align*}
\mathbf{P}_x(\textrm{no excursion outside $\Lambda_\epsilon$ in $(0, t]$ }) = e^{-t \, n(\mathfrak{E})}, \quad x \in \partial \Omega
\end{align*}  
from which we write
\begin{align*}
\mathbf{P}_x(\gamma^+ \circ \tau^+_\epsilon > t) = e^{-t \, n(\mathfrak{E})}, \quad x \in \partial \Omega.
\end{align*}
Since $e_0:= (\eta/\sigma) \gamma^+ \circ \tau^+_\epsilon$ we conclude that
\begin{align}
\mathbf{P}_x(e_0 > t) = e^{-t (\sigma/\eta) n(\mathfrak{E})} \quad x \in \partial \Omega.
\label{lawHoldingTimeX}
\end{align}
Moreover, 
\begin{align*}
\mathbf{P}_x(e_0 > t) = \mathbf{P}(e_0 > t)=\mathbf{P}(e_i > t), \quad t >0, \quad \forall\, x \in \partial \Omega, \quad \forall \, i.
\end{align*}
Indeed, the process $X^+$ starts afresh after a visit of $\Lambda_\epsilon$. It follows that
\begin{align*}
\{e_i\}_i = \{e_i\,:\, i \in \mathbb{N}_0\} \quad \textrm{is a sequence of i.i.d. exponential random variables}.
\end{align*}
For domain $\Omega$ with $C^3$ boundary, by virtue of the strong Markov property of $X^+$, both the boundary local time and the associated point process of excursions are well-defined (\cite{HSU86}). While the following results hold for a general point process, in some cases we assume that \eqref{lawHoldingTimeX} holds true simply to provide a clear reference model.
}

{\color{\blue}

\begin{remark}
\label{Rmk:regularityDomainPPP}
(The annulus $\Lambda_\epsilon$ of the unit disk $D \subset \mathbb{R}^2$) For $\Lambda_\epsilon = \{x \in \overline{D}\,:\, d(x, \partial D)<\epsilon\}$ we define the exit time $\tau^+_\epsilon = \inf\{t\,:\, X^+_t \in \overline{D} \setminus \Lambda_\epsilon\, | \, X^+ \in \partial D\}$ for the reflected Brownian motion $X^+$ on $\overline{D}$. Then we study the sticky holding time for $X$ on $\overline{D}$. In particular, we see that the time accumulated on the boundary $\partial D$ follows an exponential r.v.,  
\begin{align*}
\mathbf{P}_x((\eta/\sigma)\gamma^+\circ \tau^+_\epsilon > t) = \exp\left( -\rho t \right), \quad t>0,\, x \in \partial D, \quad \rho = (\sigma/\eta) \ln (1/(1-\epsilon)).  
\end{align*}
We consider the problem of finding the radially harmonic component on $\Lambda_\epsilon$. Thus, for
\begin{align*}
u(x) = \mathbf{E}_x[e^{- (\sigma/\eta) \gamma^+\circ \tau^+_\epsilon }], \quad x \in \Lambda_\epsilon
\end{align*}
we study the radial function ($r=|x|$)
\begin{align*}
u(r) = 1 - \frac{(\sigma/\eta)}{ 1 + (\sigma/\eta) \ln (1/(1-\epsilon))} \ln \left( \frac{r}{1-\epsilon} \right), \quad r \in (1-\epsilon, 1)
\end{align*}
for which
\begin{align*}
u(1) = \frac{1}{1 + (\sigma/\eta) \ln (1/(1-\epsilon))}
\end{align*}
implies
\begin{align*}
\mathbf{E}_x[e^{- (\sigma/\eta) \gamma^+\circ \tau^+_\epsilon }] = \mathbf{E}_x[e^{-\chi}], \quad x \in \partial D
\end{align*}
where $\mathbf{P}(\chi > t) = e^{-\rho t}$.
\end{remark}

}

We proceed with the discussion on the process $X$ on $\overline{\Omega}$ with generator $(G,D(G))$. 
\begin{lemma}
\label{LemmaMayOnlyHave}
{\color{\magenta} Assume \eqref{lawHoldingTimeX} holds.} The following statements hold true:
\begin{itemize}
\item[i)] $X$ has instantaneous reflections iff $\eta=0$;
\item[ii)] $X$ has slow (exponential {\color{\magenta} sticky} holding times) reflections iff $\eta>0$.
\end{itemize}
\end{lemma}
\begin{proof}
For $\eta=0$, $X$ is a strong Markov process on $\overline{\Omega}$, that is the elastic Brownian motion $X^{el}$ introduced in Section \ref{sec:intro}. The Robin boundary condition says that $X$ reflects instantaneously until the elastic kill. For $\eta>0$, $X$ is a strong Markov process on $\Omega$. Due to the {\color{\magenta} sticky} holding times, the process $X$ reflects slowly. 

Since {\color{\magenta} $\mathbf{P}(e_0 >t) = \exp( -(\sigma/\eta)n(\mathfrak{E}) t)$}, we have that (by definition $\sigma <\infty$), if the process $X$ has only instantaneous reflections, then $\eta=0$. Conversely, if the process has {\color{\magenta} sticky} holding times (that is, slow reflections), then $\eta >0$.
\end{proof}

\subsection{On the process $\hat{X}$}
\label{SecSubXbar}
{\color{\magenta}
We study the process $\hat{X}$ defined as $\hat{X} = X \circ \hat{L}$ in comparison with the (base) process $X$. Recall that $\hat{X}$ moves along the path of $X^+$ according with the time change $V_t$. Define
\begin{align*}
\tau_i= \tau_i(\Lambda_\epsilon) := \inf\{t > \tau_{i-1}(\Omega)\, :\, X_t \notin  \Lambda_\epsilon \}, \quad i \in \mathbb{N}
\end{align*} 
with $\tau_0 = \inf\{t\geq 0\,:\, X_t \notin \Lambda_\epsilon\}$ and 
\begin{align*}
\tau_i (\Omega) := \inf\{t > \tau_{i-1}\, :\, X_t \notin  \Omega \}, \quad i \in \mathbb{N} \quad \textrm{with} \quad \tau_0(\Omega)=0
\end{align*}
for $X$ on $\overline{\Omega}$ with $X_0 \in \partial \Omega$. Observe that
\begin{align*}
\tau_0 = \tau^+_\epsilon + e_0
\end{align*} 
from the discussion in the previous section.} Standard arguments say that
\begin{align}
\label{totalAmountLT}
\gamma_t = \sum_i^{N_t} e_i \quad N_t = \max \{ i \in \mathbb{N}_0\,:\, \tau_i < t\}
\end{align}
with $\mathbf{P}(N_t \geq i) = \mathbf{P}(\tau_i < t)$. {\color{\magenta}Now introduce
\begin{align*}
\hat{\tau}_i=\hat{\tau}_i(\Lambda_\epsilon) := \inf\{t > \hat{\tau}_{i-1}(\Omega)\, :\, \hat{X}_t \notin  \Lambda_\epsilon \}, \quad i \in \mathbb{N}
\end{align*} 
with $\hat{\tau}_0 = \inf\{t\geq 0\,:\, \hat{X}_t \notin \Lambda_\epsilon\}$ and
\begin{align*}
\hat{\tau}_i(\Omega) := \inf\{t > \hat{\tau}_{i-1}\, :\, \hat{X}_t \notin  \Omega \}, \quad i \in \mathbb{N} \quad \textrm{with} \quad \hat{\tau}_0(\Omega)=0
\end{align*}
for the process $\hat{X}$ on $\overline{\Omega}$ with $\hat{X}_0 \in \partial \Omega$.} Analogous arguments lead to
\begin{align}
\label{totalAmountLThat}
\hat{\gamma}_t = \sum_i^{\hat{N}_t} \hat{e}_i \quad \hat{N}_t = \max \{ i \in \mathbb{N}_0\,:\, \hat{\tau}_i < t\} 
\end{align}
with $\mathbf{P}(\hat{N}_t \geq i) = \mathbf{P}(\hat{\tau}_i < t)$. Notice that as $\alpha\to 1$, $L_t \to t$ (and $H_t \to t$) almost surely, then $\hat{\tau}_i \to \tau_i$ almost surely. In this case we have that $\hat{X} = X$ on $\overline{\Omega}$. {\color{\magenta}Since $\hat{X}$ behaves like $X^+$ on $\Lambda_\epsilon \setminus \partial \Omega$, then we must have
\begin{align*}
\hat{\tau}_0 = \tau^+_\epsilon + \hat{e}_0
\end{align*}  } 
for which $\hat{\tau}_0 - \tau_0 = \hat{e}_0 - e_0$ and 
\begin{align*}
\hat{\tau}_i - \tau_i = \sum_{j \leq i} (\hat{e}_i - e_i), \quad i \in \mathbb{N}_0
\end{align*}
that is the time on the boundary up to the (useful) visit $i$ of $\Lambda_\epsilon$.\\

{\color{\magenta}The local time accumulated on $\partial \Omega$ is obtained by considering the accumulation on each (useful) visit of the $\epsilon$-neighbourhood.} \\

For the sequence $\{\hat{e}_i\}_i = \{\hat{e}_i,\, i \in \mathbb{N}_0\}$ we have the following result. 

\begin{theorem}
\label{thm:ehatHe}
For the {\color{\magenta}sticky} holding times $\{\hat{e}_i\}_i$ we have $\hat{e}_0 = H_{e_0}$ and $e_i \stackrel{d}{=} H_{e_i}$ for $i \in \mathbb{N}$. Moreover, under \eqref{lawHoldingTimeX}, $\{\hat{e}_i\}_i$ are i.i.d. Mittag-Leffler random variables.
\end{theorem}

\begin{proof}
{\color{\magenta}
The local time $\hat{\gamma}_t := \gamma \circ L_t$, that is
\begin{align*}
\mathbf{P}_x(\hat{\gamma}_t > s) = \mathbf{P}_x (\gamma \circ L_t >s) = \mathbf{P}_x(L_t > \gamma^{-1}_s) 
\end{align*}
Thus the trace process $X^*_s := X \circ \gamma^{-1}_s$ on $\partial \Omega$ runs for $0 \leq \gamma^{-1}_s < L_t$. For $\alpha=1$, the random interval simply becomes $0 \leq \gamma^{-1}_s < t$. Since $\hat{X}$ moves along the path of $X$, then $\hat{X}\circ \hat{\tau}_0 = X \circ \tau_0$ almost surely. From the trace processes 
\begin{align*}
X^*_s := X \circ \gamma^{-1}_s \quad \textrm{and} \quad \hat{X}^*_s := \hat{X} \circ \hat{\gamma}^{-1}_s = X^* \circ L_s
\end{align*}
with the random intervals 
\begin{align*}
\gamma^{-1}_s : (0, e_0) \to (0,\tau_0) \quad \textrm{and} \quad \hat{\gamma}^{-1}_s : (0, \hat{e}_0) \to (0, \hat{\tau}_0)
\end{align*}
we get $L_{\hat{e}_0} = e_0$ almost surely, that is
\begin{align*}
\hat{e}_0 = H_{e_0}.
\end{align*} 
}

Consider now the couple $(X, H)$ of Markov processes where $X\perp H$ and $(X_0,H_0)=(x,0)$, $x \in \partial \Omega$. The process $\hat{X}$ is obtained by $(X,H)$ via $L=H^{-1}$ and $\alpha(\cdot)$ defined in Section \ref{sec:hatX}. We know the paths of $(X,H)$. Then we already know the sequence $\{e_i\}_i$ of {\color{\magenta} sticky} holding times for $X$ and we are able to identify 
$$T^e_i = e_1 + \ldots + e_i$$ 
and $\gamma_t$ as defined above in this section (formula \eqref{totalAmountLT}). Concerning the subordinator $H$, the homogeneity and independence of the increments leads to the Markov property and
\begin{align*}
\textrm{for all } s\geq 0, \;\; \{H_{t+s}-H_s\}_{t \geq 0} \quad \textrm{has the same law as} \quad \{H_t\}_{t\geq 0}
\end{align*}
with $H_s \perp H_{t+s}-H_s$. By exploiting the strong Markov property of $H$ we have that
\begin{align*}
\{H_{t + e_0}-H_{e_0}\}_{t \geq 0} \quad \textrm{has the same law as} \quad \{H_t\}_{t\geq 0}
\end{align*}
with $H_{e_0} \perp H_{t + e_0}-H_{e_0}$. Consider $e_0 \perp e_1$, then 
\begin{align}
H_{e_0} \perp (H_{e_1+e_0} - H_{e_0}) \quad \textit{which has the same law as} \quad H_{e_1}. 
\label{hatWithH}
\end{align}
Since $\hat{X}$ moves along the path of $X$, we write $T^e_i = \gamma \circ \tau_i$ and, from the path of $H$, we identify the sequence $\{\hat{e}_i\}_i$ as
\begin{align}
\hat{e}_i = H \circ T^e_i - H \circ T^e_{i-1}, \quad T^e_{-1} = 0
\label{hatWithHb}
\end{align}   
where
\begin{align*}
H \circ T^e_i - H \circ T^e_{i-1} \perp H \circ T^e_{i-1}
\end{align*}
and
\begin{align*}
H \circ T^e_i - H \circ T^e_{i-1} = H \circ (e_i + T^e_{i-1}) - H \circ T^e_{i-1} \quad \textrm{has the same law as} \quad H_{e_i} .
\end{align*}
Thus, $\hat{e}_i \perp H \circ T^e_{i-1}$, that is $\hat{e}_i$ is independent from $\{e_0, \ldots, e_{i-1}\}$ as well as from $\{\hat{e}_0\, \ldots \hat{e}_{i-1}\}$. For example, formula \eqref{hatWithH} leads to the first few elements of $\{\hat{e}_i\}_i$:
\begin{itemize}
\item[-] $\hat{e}_0 = H_{e_0}$ (which is a consequence of the fact that {\color{\magenta} $\mathbf{P}(\hat{X}_{\hat{e}_0} = X_{e_0})=1$});\\
\item[-] $\hat{e}_1 = H_{e_0+e_1} - H_{e_0} \perp H_{e_0}$ implies $\hat{e}_1 \perp \hat{e}_0$ and $\hat{e}_1 \stackrel{law}{=} H_{e_1}$ (according to \eqref{hatWithHb});\\
\item[-] $\hat{e}_2 = H_{e_0+e_1+e_2} - H_{e_0+e_1} \perp H_{e_0+e_1} = H_{e_0} + \hat{e}_1$ implies $\hat{e}_2 \perp \hat{e}_0 + \hat{e}_1$ and $\hat{e}_2 \stackrel{law}{=} H_{e_2}$; \\
\item[-] $\hat{e}_3 = H_{e_0+e_1+e_2 + e_3} - H_{e_0+e_1+e_2} \perp H_{e_0+e_1+e_2} = H_{e_0+e_1} + \hat{e}_2$
implies that
\begin{align*}
\hat{e}_3 \perp \sum_{j < 3} \hat{e}_j \; \textrm{ with } \; \hat{e}_0 \perp \hat{e}_1 \perp \hat{e}_2 \; \textrm{ and } \; \hat{e}_3 \stackrel{law}{=} H_{e_3}
\end{align*}
\item[-] $\hat{e}_i$ with $i >3$ by following the same arguments. 
\end{itemize}
We conclude that 
\begin{align}
\mathbf{P}\left( \bigcap_{i \geq 0} (\hat{e}_i > t_i) \right) = \prod_{i \geq 0} \mathbf{P}(H_{e_i} > t_i), \quad t_i \geq 0, \; i \in \mathbb{N}_0
\end{align}
and $\{\hat{e}_i\}_i$ is a sequence of independent random variables with $\hat{e}_0 = H_{e_0}$.
\end{proof}

\begin{remark}
Notice that  $L \circ H_t = t$ imply  $\hat{e}_0 = H_{e_0}$ which holds almost surely whereas $\hat{e}_i \stackrel{law}{=} H_{e_i}$ for $i >0$. 
\end{remark}

\begin{remark}
We observe that, from \eqref{totalAmountLT}, we write
\begin{align*}
\gamma \circ \tau_i = \sum_j e_j \, \mathbf{1}_{(j \leq N_{\tau_i})} = \sum_j e_j \, \mathbf{1}_{(\tau_j \leq \tau_i)} = \sum_{j \leq i} e_j
\end{align*}
whereas, from \eqref{totalAmountLThat}, we get that the telescoping series
\begin{align*}
\hat{\gamma} \circ \hat{\tau}_i = \sum_{j} \hat{e}_j\, \mathbf{1}_{(j \leq \hat{N}_{\hat{\tau}_i})} = \sum_{j \leq i} \hat{e}_j = [ \textrm{from } \eqref{hatWithHb} ]= H \circ \sum_{j \leq i} e_j \;\; \textrm{which equals in law}\;\; \sum_{j \leq i} H_{e_j}.
\end{align*}
\end{remark}

\begin{remark}
As stated above $\{\hat{e}_i\}_i$ is a sequence of i.i.d. {\color{\magenta} sticky} holding times for $\hat{X}$. This accords well with the fact that $H\perp X$ and $\{e_i\}_i$ is a sequence of i.i.d. {\color{\magenta} sticky} holding times for $X$. Thus, under \eqref{lawHoldingTimeX}, for all $i$, we have the Mittag-Leffler distribution
\begin{align}
\label{holdingML}
\mathbf{P}(\hat{e}_i > t) = E_\alpha (-(\sigma / \eta) t^\alpha), \quad t\geq 0.
\end{align}
Since $E_\alpha \notin L^1(0, \infty)$, we deduce that the process $\hat{X}$ spends an infinite mean amount of time on $\partial \Omega$ with each visit. {\color{\magenta} We remark that, in general, 
\begin{align*}
\mathbf{E}[\hat{e}_i] = \mathbf{E}[H_1]\, \mathbf{E}[e_i]
\end{align*}
for all $i$ and $\mathbf{E}[H_1]=\infty$.}
\end{remark}

Now we state the following result on the independent delay for the reflection of the time-changed sticky process $\hat{X}$.
\begin{lemma}
\label{LemmaMayOnlyHaveHat}
{\color{\magenta} Assume \eqref{lawHoldingTimeX} holds.} The following statements hold true:
\begin{itemize} 
\item[i)] $\hat{X}$ has instantaneous reflections iff $\eta=0$;
\item[ii)] $\hat{X}$ has slow (delayed) reflections iff $\eta>0$.
\end{itemize}
\end{lemma}
\begin{proof}
We follow the same arguments as in the proof of Lemma \ref{LemmaMayOnlyHave}. Recall that 
\begin{align*}
\mathbf{P}(\hat{e}_0 >t) = E_\alpha(-(\sigma/\eta) t^\alpha)
\end{align*}
where the Mittag-Leffler function $E_\alpha$ has been introduced in Section \ref{sec:secXL}. Thus, we may have instantaneous reflection only if $\eta=0$ whereas, we may have Mittag-Leffler {\color{\magenta} sticky} holding times if $\eta>0$.

On the other hand, for $\eta=0$, we have that
\begin{align*}
\mathbf{E}_x[f(\hat{X}_t)] = \int_{\Omega} f(y) p(t,x,y) dy
\end{align*}
where $p(t,x,y)$ is now the continuous kernel for the elastic Brownian motion $X^{el}$.
In case $\eta>0$, the process $\hat{X}$ is defined as a time-changed elastic sticky Brownian motion $X$ for which the new clock $L$ introduces a delay effect (see Definition 5.2 in \cite{CapDovDelRus17}). Since $X$ slowly reflects (exponential {\color{\magenta} sticky} holding times) on the boundary, then $\hat{X} = X \circ L$ turns out to be delayed (Mittag-Leffler {\color{\magenta} sticky} holding times). Indeed, as $\eta \neq 0$, 
\begin{align*}
\mathbf{P}(e_i >t) \neq 0 \quad \Rightarrow \quad \mathbf{P}(\hat{e}_i > t) = \int_0^\infty \mathbf{P}(e_i >s) \mathbf{P}(L_t \in ds) \neq 0.
\end{align*}
\end{proof}

{\color{\magenta}
\begin{remark}
\label{rmk:HThatX} 
(Holding times for $\alpha \neq 1$) The random time $L$ delays the process $X$ and the jumps of $H$ introduce plateaus for the clock $L$. Thus, the time change $L$ produces holding times for $\hat{X}$. Recall that
\begin{align*}
X^*_s := X \circ \gamma^{-1}_s \quad \textrm{and} \quad \hat{X}^*_s := \hat{X} \circ \hat{\gamma}^{-1}_s = X^* \circ L_s
\end{align*}
as \eqref{probBoundary} entails. Thus the process $\hat{X}$ can spend a random time in a point of the boundary according to the jumps $\{H_t - H_{t-},\, t>0\}$. Let us define $J_H = \{t >0\,:\, H_{t} > H_{t-}\}$ and 
\begin{align*}
L^\Delta_t = \inf\{\delta>0\,:\, L_{t+\delta} > L_t\}, \quad t>0.
\end{align*}
Then $\mathbf{P}(L^\Delta_t >0,\, t>0) = 0$ and
\begin{align*}
\mathbf{P}(L^\Delta_{H_{t-}}> 0,\, t \in J_H) = 1.
\end{align*}
We observe that these holding times do not depend on the parameter $\epsilon\geq 0$ of the $\Lambda_\epsilon$. 
\end{remark}
}

\subsection{On the process $\bar{X}$}

{\color{\magenta}
For the process $\bar{X}$ we discuss on sticky holding times and holding times for $\alpha \in (0,1]$. Observe that we may have holding times only for $\alpha \in (0,1)$. 
}

\begin{theorem}
\label{thm:holdingHatBar}
For the {\color{\magenta} sticky} holding times $\{\bar{e}_i\}_i$, we have $\bar{e}_0 = H_{e_0}$ and $e_i \stackrel{d}{=}H_{e_i}$ for $i \in \mathbb{N}$. Moreover, under \eqref{lawHoldingTimeX}, $\{\bar{e}_i\}_i$ are i.i.d. Mittag-Leffler random variables.
\end{theorem}

\begin{proof}
The process $\bar{X}$ moves on the path of $X^+$ according to the new clock $\bar{V}^{-1}$ which governs the time the process $\bar{X}$ {\color{\magenta} spends} on the boundary with each visit {\color{\magenta} of $\Lambda_\epsilon$}, that is the {\color{\magenta} sticky} holding times for $\bar{X}$. Notice that $X^+$ may have only instantaneous reflections up to the elastic kill. Focus on $V$ and $\bar{V}$. By definition, $V_t$ and $\bar{V}_t$ equal $t$ in case $\eta=0$, there are no {\color{\magenta} sticky} holding times and we have instantaneous reflection as the path of $X^+$ entails. For $\eta>0$, the extra-time $(\eta/\sigma)\gamma^+_t$ the process $X^+$ spends on the boundary is considered in the construction of $X$. Similarly, $H \circ (\eta/\sigma) \gamma^+_t$ gives the extra-time the process $X^+$ spends on the boundary in order to have $\bar{X}$. Let us introduce $\bar{J}^\delta_t$ and $\bar{J}^\sharp_t$ by following the same arguments as in Section \ref{sec:comparisonXhatX}. That is, $\bar{J}^\delta_t$ is the set of inter-times between jumps for the trace process of $\bar{X}$ and therefore, it gives the {\color{\magenta} sticky} holding times for $\bar{X}$. The set $\bar{J}^\sharp$ can be identified with the set $\mathbb{N}_0$. We can write $\bar{V}_t - t = H \circ (V_t - t)$ which is trivial in case $\alpha=1$. Indeed, $H_t$ becomes the elementary subordinator $t$. Recall that $H$ is independent from $X^+$. Thus, for a suitable random time we get the equality in law $\bar{e} = H \circ e$ with $\bar{e} \in \bar{J}^\delta_\infty$ and $e \in J^\delta_\infty$. In particular, assume the process $X^+$ starts away from $\partial \Omega$, then $V_{t} - t = 0$ for $0\leq t < \tau_{\partial \Omega}$ and $V_{\tau_{\partial \Omega}} - \tau_{\partial \Omega} > 0$ due to the instantaneous reflection of $X^+$. Observe that $V_t - t$ is the continuous clock for the right-continuous process $\bar{V}_t - t$ which jumps according to $H$. This means that the jumps of $\bar{V}_t - t$ can be properly identified. We consider the return (on the boundary) time $\tau^+_i$ (after the first hitting time $\tau^+_0=\tau_{\partial \Omega}$) for the excursion $i \in \bar{J}^\sharp$ of $\bar{X}$ on the interior $\Omega$ and we are able to identify the sequence $\{\bar{e}_i\}$, $i \in \bar{J}^\sharp$ where $\bar{e}_i$ has the same law as $H \circ e_i$. For $\alpha=1$, we get $\bar{e} = e$ {\color{\magenta} (with $\mathbf{P}(e >t) =e^{-t\, (\sigma/\eta) n(\mathfrak{E}) }$ for the Poisson point process of excursions).} Indeed, $\alpha=1$ in \eqref{probXbar} together with \eqref{SameArgs} gives the problem \eqref{probX} for the sticky Brownian motion $X$ discussed in Section \ref{sec:secX}. We conclude that $\{\bar{e}_i\}_i$ is a sequence of independent and identically distributed random variables. The discussion above entails a very interesting difference between the (Lebesgue measurable) occupation times for $\alpha=1$ and $\alpha \in (0,1)$.\\ 

In detail, we proceed as follows:
\begin{itemize}
\item[-] Recall that
\begin{align*}
T^e_i := \sum_{j \leq i} e_j, \quad e_j \in J^\delta
\end{align*}
according to Section \ref{sec:comparisonXhatX};
\item[-] For the extra time $V_t - t = (\eta/\sigma) \gamma^+_t$ we consider $V_{\tau^+_i} - \tau^+_i$. That is,
\begin{align}
(\eta/\sigma) \gamma^+ \circ \tau^+_i = T^e_i \in J, \quad i \in \mathbb{N}_0
\label{extraT1}
\end{align} 
{\color{\magenta} where $\tau^+_i=\tau^+_i(\Lambda_\epsilon)$ is defined according to Section \ref{SecSubXbar};}
\item[-] For the extra time $\bar{V}_t - t = H \circ (\eta/\sigma) \gamma^+_t$ we consider $\bar{V}_{\tau^+_i} - \tau^+_i$. That is,
\begin{align}
H \circ (\eta/\sigma) \gamma^+ \circ \tau^+_i = H \circ T^e_i \in \bar{J}, \quad i \in \mathbb{N}_0 ;
\label{extraT2}
\end{align} 
\item[-] By construction we get
\begin{align}
e_i = T^e_i - T^e_{i-1} \quad \textrm{and} \quad \bar{e}_i = H \circ T^e_i - H \circ T^e_{i-1}, \quad T^e_{-1}=0
\label{barWithHb}
\end{align}
where the last formula coincides with \eqref{hatWithHb}. As it is clear now, $\bar{e}_i = e_i$ as $\alpha=1$. For $\alpha \in (0,1]$:
\begin{itemize}
\item[-] $\bar{e}_0 = H_{e_0}$ (which is a consequence of \eqref{extraT1} and \eqref{extraT2});\\
\item[-] $\bar{e}_1 = H_{e_0+e_1} - H_{e_0} \perp H_{e_0}$ implies $\bar{e}_1 \perp \bar{e}_0$ and $\bar{e}_1 \stackrel{law}{=} H_{e_1}$ (according to \eqref{barWithHb});\\
\item[-] $\bar{e}_i$ with $i>1$ as above in the proof of Theorem \ref{thm:ehatHe}.
\end{itemize}
Thus, we obtain that
\begin{align*}
\textrm{$T^e_0 = e_0$ implies $\bar{e}_0 = H_{e_0}$}
\end{align*}
and
\begin{align*}
\bar{e}_i \;\; \textrm{has the same law as} \;\;  H \circ e_i \;\;  \textrm{for all} \;\; i > 0.
\end{align*}
\end{itemize}
This concludes the proof.
\end{proof}

From \eqref{equivgamma}, we also observe that
\begin{align*}
H_t = t \quad \textrm{implies} \quad \bar{e}_i = e_i \quad \textrm{for all} \quad i \in \mathbb{N}_0.
\end{align*}

{\color{\magenta} 
\begin{remark}
\label{rmk:HTbarX}
(Holding times for $\alpha \neq 1$) For the process $\bar{X}$, we observe that \begin{align*}
\bar{V}_t = t + H \circ (\eta/\sigma) \gamma^+_t \quad \textrm{and} \quad V\circ r_t = r_t + H \circ (\eta/\sigma) t.
\end{align*}
The jumps of $r_t$ give the time the process $X^+$ spends on the interior $\Omega$ and the jumps of $H \circ (\eta/\sigma) t$ rushes the clock (w.r. to $\gamma^+$). The independent jumps of $H$ produce plateaus for $\bar{V}^{-1}_t$, thus the process $\bar{X}$ can spend a random time in a point of the boundary. We observe that these holding times do not depend on the parameter $\epsilon\geq 0$ of the $\Lambda_\epsilon$. 
\end{remark} }

We state the following result on the slow reflections of the sticky process $\bar{X}$.

\begin{lemma}
\label{LemmaMayOnlyHaveBar}
{\color{\magenta} Assume \eqref{lawHoldingTimeX} holds.} The following statements hold true:
\begin{itemize} 
\item[i)] $\bar{X}$ has instantaneous reflections iff $\eta=0$;
\item[ii)] $\bar{X}$ has slow (heavy tailed {\color{\magenta} sticky} holding times) reflections iff $\eta>0$.
\end{itemize}
\end{lemma}
\begin{proof}
We follow partially the proof of Lemma \ref{LemmaMayOnlyHaveHat}. Recall that  
\begin{align*}
\mathbf{P}(\bar{e}_0 >t) = E_\alpha(-(\sigma/\eta) t^\alpha).
\end{align*}
Instantaneous reflection implies $\mathbf{P}(\bar{e}_0 >t)=0$ for all $t>0$, that is $\eta=0$ whereas we have delayed reflection only in case $\eta>0$.

For $\eta=0$, $\bar{V}_t = t$ and $\bar{X}$ is an elastic Brownian motion which implies instantaneous reflections. Focus now on $\eta>0$. Recall that $S_{\beta, t}$ defined in the proof of Theorem \ref{thm:pBarContinuity}, generated by $G_\beta = \Delta$ with
\begin{align*}
D(G_\beta) = \left\{ \varphi, \Delta \varphi \in C(\overline{\Omega}), \varphi \in H^1(\Omega)\,:\, \eta \frac{\lambda^\alpha}{\lambda} (\Delta \varphi) |_{\partial \Omega} = - \sigma \partial_{\bf n} \varphi - c \varphi |_{\partial \Omega} \right\},
\end{align*}
is the semigroup of the elastic sticky Brownian motion with speed measure
\begin{align*}
m_\lambda(dx) = dx + \frac{\lambda^\alpha}{\lambda} (\eta/\sigma) \, m_\partial (dx).
\end{align*}
Thus, for $\alpha=1$, the process can only have slow reflection (see Lemma \ref{LemmaMayOnlyHave}) whereas, for $\alpha \in (0,1)$ the exponential {\color{\magenta} sticky} holding times are delayed by $L$. Indeed, from Theorem \ref{thm:holdingHatBar}, $\bar{e}_i$ equals in law a Mittag-Leffler random variable for $i\geq 0$. That is, we have heavy tailed distribution. 

\end{proof}

\begin{remark}
\label{rmk:PPPlast}
{\color{\magenta} (Excursion and heavy-tailed Poisson point process) We stress the following facts. Assume $X^+$ on $\overline{\Omega}$ give rise to a Poisson point process taking value in the space of excursions.} For the processes $X$ or $\bar{X}$ started from {\color{\magenta} $x \in \overline{\Omega} \setminus \Lambda_\epsilon$} we may respectively consider
\begin{align*}
\mathbf{P}(e_0 > t) = \mathbf{P}(\textrm{no excursions of $X$ on {\color{\magenta}$\overline{\Omega} \setminus \Lambda_\epsilon$} until time $t$}) = \mathbf{P}(N_{t} = 0) 
\end{align*}
and
\begin{align*}
\mathbf{P}_x(\bar{e}_0 > t) = \mathbf{P}(\textrm{no excursions of $\bar{X}$ on {\color{\magenta}$\overline{\Omega} \setminus \Lambda_\epsilon$} until time $t$}) = \mathbf{P}(\bar{N}_{t} = 0) 
\end{align*}
where $N_t$ is a Poisson process and $\bar{N}_t=N \circ L_t$ is the fractional Poisson process  with $N$ independent from $L$. In particular,
\begin{equation*}
\mathbf{P}(N_t = k) = {\color{\magenta} e^{-\lambda t} \frac{(\lambda t)^k}{k!} , \quad k \in \mathbb{N}_0 \quad  \textrm{with }\; \mathbf{E}[N_t] = \lambda t , \quad \lambda :=(\eta/\sigma) n(\mathfrak{E})}
\end{equation*} 
and
\begin{equation*}
\mathbf{P}(\bar{N}_t = k) = \int_0^\infty \mathbf{P}(N_s = k)\, l(t,s)ds = \mathbf{P}(N \circ L_t = k), \quad k \in \mathbb{N}_0.
\end{equation*}
We say that "no Poissonian events" coincides with "no excursions on the interior". With \eqref{LapML} at hand, we have that
\begin{equation*}
\mathbf{P}(\bar{N}_t = 0) = \int_0^\infty \mathbf{P}(N_s = 0)\, l(t,s)ds = {\color{\magenta} \int_0^\infty e^{- \lambda s}\, l(t,s)ds = E_\alpha(-\lambda t^\alpha)}
\end{equation*}
gives the time the process spends on the boundary until the next excursion on the interior $\Omega$. We observe that
\begin{align*}
\mathbf{E}[\bar{N}_t] = \mathbf{E}[N_1]\, \mathbf{E}[L_t] = (\eta/\sigma)\,{\color{\magenta} n(\mathfrak{E})} \frac{t^\alpha}{\Gamma(\alpha + 1)}, \quad t \geq 0
\end{align*}
where $\Gamma(\cdot)$ is the gamma function {\color{\magenta} and $n(\mathfrak{E})$ is the It\^{o} measure introduced above}.

\end{remark}

\section{On the boundary processes}
The following discussion is obtained as a by-product of the previous results. We proceed without a detailed proof. Recall that we deal only with static behaviour on the boundary (that is we have no diffusive part). The boundary trace processes we are interested in are expected to be pure jump processes written as
\begin{align*}
X \circ \gamma^{-1}_t \quad t \geq 0 \quad \textrm{and} \quad \bar{X} \circ \bar{\gamma}^{-1}_t \quad t\geq 0.
\end{align*}
{\color{\magenta} Let us consider the notation given in Section \ref{SecSubXbar} for $X^+$ on $\overline{\Omega}$ with $X^+_0 \in \Omega$, that is 
\begin{align*}
\tau_i^+(\Lambda_\epsilon) := \inf\{t > \tau_{i-1}^+\, :\, X^+_t \notin  \Lambda_\epsilon \}, \quad i \in \mathbb{N}
\end{align*} 
with $\tau^+_0 = 0$ and 
\begin{align*}
\tau^+_i = \tau^+_i (\Omega) := \inf\{t > \tau_{i-1}^+(\Lambda_\epsilon)\, :\, X^+_t \notin  \Omega \}, \quad i \in \mathbb{N}
\end{align*}
with $\tau^+_0= \inf\{t\geq 0\,:\, X_t \notin \Lambda_\epsilon\}$}. Let us consider now the following processes on {\color{\magenta} $\Lambda_\epsilon$}.  Along the path of $X^+$, we have that  
\begin{align*}
X^\partial_t := X^+ \circ \int_0^t \big( \tau^+_{N_s} - \tau^+_{N_{s-}} \big) dN_s = X^+ \circ \sum_{k=0}^{N_t} \big( \tau^+_k - \tau^+_{k-1} \big) = X^+ \circ \tau^+_{N_t}
\end{align*}
and
\begin{align*}
\bar{X}^\partial_t := X^+ \circ \int_0^t \big( \tau^+_{\bar{N}_s} - \tau^+_{\bar{N}_{s-}} \big) d\bar{N}_s = X^+ \circ \sum_{k=0}^{\bar{N}_t} \big( \tau^+_k - \tau^+_{k-1} \big) = X^+ \circ \tau^+_{\bar{N}_t}
\end{align*}
are holding and jumping processes on the {\color{\magenta} boundary $\partial \Omega$ according to the sticky holding times on the $\epsilon$-neighbourhood of the} boundary. The inter-times for the jumps of $X^\partial_t$ and $\bar{X}^\partial_t$ are given by the  associated {\color{\magenta} sticky} holding times, that is we respectively have exponential and Mittag-Leffler inter-times {\color{\magenta} as discussed in Remark \ref{rmk:PPPlast}}.

By following the same arguments for the process $\hat{X}$ (and by Theorem \ref{thm:holdingHatBar}) we are lead to the open problem for which the identity
\begin{align}
\mathbf{E}_x[f(\bar{X}^\partial_t)] = \mathbf{E}_x[f(\hat{X}^\partial_t)], \quad f\in C(\partial \Omega), \quad t\geq 0, \; x \in \partial \Omega
\label{idOpenProb}
\end{align}
holds in some sense ({\color{\magenta} also for $\epsilon \to 0$}). This should allow an equivalence between boundary traces of $\mathsf{w}$ and $u$ given above. A rigorous formulation of \eqref{idOpenProb} is still in progress. 

The boundary trace process has generator given by the Dirichlet-Neumann operator which can be defined as the normal derivative of the solution to a Dirichlet problem (we refer to \cite{HSU86} for a general discussion and the probabilistic framework). Indeed, it is well-known that the Dirichlet-Neumann (or the Dirichlet-to-Neumann) operator, say $A_{DN}$, maps $U \in D(A_{DN})$ to $\partial_{\bf n} \mathcal{U} \in L^2(\partial \Omega)$ where $\mathcal{U} \in H^1(\Omega)$ is the harmonic function with $T \mathcal{U}=U$. In general, this operator is non-local and the associated motion is right-continuous with left limits. This operator turns out to be associated with the reflected Brownian motion $X^+$ (and therefore with instantaneous reflections).

It is clear that $X^\partial_t$ is a Markov process. Assume that $A^*_{DN}$ is the generator of the boundary process $X^\partial_t$. Then, $\mathbf{E}_x[f(X^\partial_t)]$ solves the problem
\begin{align*}
\frac{\partial \varphi}{\partial t} = A^*_{DN} \varphi, \quad \varphi_0 = f \in D(A^*_{DN}).
\end{align*}
Since $\bar{N}_t$ can be obtained via time change as $N \circ L_t$ where $L$ is an inverse to a stable subordinator, then standard arguments say that $\mathbf{E}_x[f(\bar{X}^\partial_t)]$ solves the problem
\begin{align*}
D^\alpha_t \bar{\varphi} = A^*_{DN} \bar{\varphi}, \quad \bar{\varphi}_0 = f \in D(A^*_{DN}).
\end{align*}
Thus, the problem is of the type discussed in Section \ref{sec:secXL}.  Moreover, we notice that also the boundary process of $\hat{X}$ can be associated with  FCPs. Indeed, $\hat{X}$ has a representation along the path of $X^+$ and i.i.d. holding times given by the Mittag-Leffler r.v. as stated in Theorem \ref{thm:holdingHatBar}. The rigorous characterization of $A^*_{DN}$ as well as the connection between $A_{DN}$ and the boundary trace processes of $X$ and $\bar{X}$ is part of our recent investigations.

\section{Conclusions}
\label{Sec:ConOP}


We have introduced the process $\hat{X}$ for which we have an anomalous behaviour on the boundary. Despite this, it cannot be considered to solve the NLBVP for which instead, we have to introduce $\bar{X}$. The different processes $\hat{X}$ and $\bar{X}$ are leading to the same (static) behaviour on the boundary in terms of holding times. The sticky condition (for $\alpha=1$) implies a finite mean {\color{\magenta} sticky} holding time whereas, for $\alpha \in (0,1)$, the fractional sticky analogue implies an infinite mean {\color{\magenta} sticky} holding time. We recall that for $\alpha \in (0,1]$, {\color{\magenta} we have $\mathbf{P}(\hat{e}_i >t) = \mathbf{P}(\bar{e}_i >t)$ for $i \in \mathbb{N}_0$. The sticky} holding times $\{\hat{e}_i\}_i$ and $\{\bar{e}_i\}_i$ are independent and identically distributed random variables as entailed in Theorem \ref{thm:ehatHe} and Theorem \ref{thm:holdingHatBar}. This implies that $\mathbf{E}[\hat{e}_i]$ and $\mathbf{E}[\bar{e}_i]$ are finite only for $\alpha=1$ (which is the case of $X$ with generator $(G,D(G))$ introduced in Section \ref{sec:secX}). These {\color{\magenta} sticky} holding times can be taken as general as we need by considering the operator $D^\Phi_t \varrho$ in place of $D^\alpha_t \varrho$ where, according to \eqref{CD-LT}, 
\begin{align}
\int_0^\infty e^{-\lambda t} D^\Phi_t \varrho(t,x)\, dt 
= & \frac{\Phi(\lambda)}{\lambda} \big(\lambda (\mathcal{L}\varrho)(\lambda, x) - \varrho(0, x) \big), \quad \lambda>0
\label{genCD}
\end{align}
and $H$ is such that $\mathbf{E}_0[\exp(-\lambda H_t)] = \exp(- t \Phi(\lambda))$.
As discussed in \cite{CapDovDelRus17}, we may include {\color{\magenta} sticky} holding times for $\hat{X}$ and $\bar{X}$ characterized by
\begin{align*}
\lim_{\lambda \to 0} \frac{\Phi(\lambda)}{\lambda} \quad \textrm{where} \quad \frac{\Phi(\lambda)}{\lambda} = \int_0^\infty e^{-\lambda y} \phi(y, \infty) dy
\end{align*} 
written in terms of the tail of $\phi$ and
\begin{align*}
\Phi(\lambda) = \int_0^\infty (1- e^{-\lambda y}) \phi(dy)
\end{align*}
characterizes the subordinator $H$ via the L\'{e}vy measure $\phi$. {\color{\magenta} In this case, $\mathbf{E}[\hat{e}_i]$ and $\mathbf{E}[\bar{e}_i]$ are given by
\begin{align*}
\mathbf{E}[H_{e_i}] = \mathbf{E}[H_1]\, \mathbf{E}[e_i] = \lim_{\lambda \to 0} \frac{\Phi(\lambda)}{\lambda}\, \mathbf{E}[e_i], \quad i \in \mathbb{N}_0.
\end{align*}
For $\alpha \in (0,1)$, the time the processes $\hat{X}$ and $\bar{X}$ spend on the boundary $\partial \Omega$ (in addition to the sticky holding times) is also determined by holding times, each defined as the duration of a single trap (the physical time is delayed by the jumps of $H$). We also provide a discussion in terms of occupation measures. }\\

Our results on $\hat{X}$ and $\bar{X}$ are new. In particular, as far as we know, the non-local dynamic problem \eqref{probXbar} has been introduced in the present paper together with the probabilistic representation of the solution. Also in case $\alpha=1$ the probabilistic representation of the solution on $\overline{\Omega} \subset \mathbb{R}^d$, $d>1$  is new and agrees with the representation obtained on the half line in \cite{ItoMcK}.\\

We briefly provide some examples of application on financial, biological and traffic/data  models. 

Many phenomena can exhibit the same behaviour as the processes $\hat{X}$ and $\bar{X}$ near the boundary, that is as an endogenous property of the motion. An example is given in \cite{BKLT-financeSticky} where the authors deal with a model of sticky expectations in which investors update their beliefs too slowly. A further example is given by the bank interest rates (or the Vasicek model for instance)  which are termed sticky if they react slowly to changes in the corresponding market rates or in the policy rate (\cite{GNSS-finance-Sticky}). Our results in this regards give a control for the slow update/reaction depending on $\alpha \in (0,1)$.

Moreover, an intermediate phenomenon between microscopic and macroscopic scales is the synthetic of biological adhesion of colloidal particles to one another or to other surfaces, typically immersed in a fluid medium. Sticky diffusions may arise when modeling systems of mesoscale particles (those with diameters around 1 micrometer) which form the building blocks for many common materials (\cite{KBS-bio-sticky}). Our results introduces fractional sticky distance between particles depending on $\alpha \in (0,1)$.

The problem \eqref{probXbar} in the $1$-dimensional case has been recently considered in \cite{BonDov} in order to describe motions on star graphs. This result can  be also considered in traffic models or motions on metric graphs. The motion slows down in a vertex (node) according to an independent holding time. 

The problem \eqref{probXbar} in the $d$-dimensional case introduces a Brownian motion on a smooth domain with an irregular behaviour near the boundary. Roughly speaking we formulate the following conjecture:
\begin{itemize}
\item[] \begin{minipage}{0.85\textwidth}
{\it "$\bar{X}$ on $\Omega^s$ can be considered to describe the behaviour of $X^+$ on $\Omega^*$ where $\Omega^*$ is a trap domain (as defined in \cite{BCM06}) and $\Omega^s$ is a smooth approximation of $\Omega^*$, for example, $dist(\partial \Omega^s, \partial \Omega^* ) < s=s(\epsilon)$ and $\partial \Omega^s$ is smooth".}
\end{minipage}
\end{itemize}
Trap domains can be obtained as in Figure \ref{Fig:KochModified} in terms of Koch curves. In case of Lipschitz domains we have results for the FCP on Koch (also random Koch) domain given in \cite{CapDovFCP21} which is the case on the left in Figure \ref{Fig:KochModified}. The fractal domain obtained as a limit of modified Koch domains (as in the right of Figure \ref{Fig:KochModified}) is a trap domain for the Brownian motion. In particular, by following the definition in \cite{BCM06}, we introduce the ball $\mathcal{B} \subset \Omega^*$ of positive radius where $\Omega^*$ is the (fractal) modified Koch domain. Then, for the reflected Brownian motion $X^+$ on $\Omega^*$, define $\tau_{\mathcal{B}} = \inf\{t\, :\, X^+_t \in \partial \mathcal{B}\}$ which can be regarded as the lifetime of $X^+$ on $\Omega^* \setminus \overline{\mathcal{B}}$. That is, $X^+$ is killed on $\partial \mathcal{B}$. Thus, $\Omega^*$ is trap for $X^+$ meaning that
\begin{align}
\sup_{x \in \Omega^* \setminus \overline{\mathcal{B}}} \mathbf{E}_x[\tau_{\mathcal{B}}] = \infty.
\label{DefTRAP}
\end{align}
A discussion on the modified Koch domain as an example of trap domain has been given in \cite{BBC08}. In the same work, the authors proved that the fractal Koch domain (obtained as a limit of the domains in the left of Figure \ref{Fig:KochModified}) is non trap for $X^+$. The condition \eqref{DefTRAP} provides a characterization of the boundary $\partial \Omega^*$. The process $X^+$ can be trapped in some region of the domain, near the irregular boundary. Then, the mean lifetime (time needed to reach the Dirichlet ball) is infinite.

Our conjecture is based on the results in \cite{BBC08} where a function $\mathcal{W}(a)$ is considered for the portion removed in the wall (of length $a>0$) closing each triangles of the Koch curves. Concerning Figure \ref{Fig:KochModified}, from the Koch curves of the picture on the left, we construct the modified curves of the picture on the right. Thus, $\mathcal{W}(a)$ gives the openings in triangles with side $a>0$. Under the action of the contractive similitudes, with a given contraction factor, we construct the (fractal) modified Koch domain {\color{\magenta} and} we obtain the (fractal) Koch domain from the Lipschitz domains. The problem is to identify $\alpha \to \mathcal{W}_\alpha(a)$ such that $\mathcal{W}_{\alpha_1} (a) > \mathcal{W}_{\alpha_2} (a)$ if $\alpha_1 > \alpha_2$ for all $a>0$. For example, $\mathcal{W}_1(a) = a$ (there is no wall) and $\mathcal{W}_0(a)=0$ (there is no window). Thus, the trap effect on the boundary $\partial \Omega^*$ for $X^+$ depends on $\alpha \in (0,1)$ which governs the holding times on $\partial \Omega^s$ for $\bar{X}$.

\begin{figure}
\centering
\includegraphics[scale=2]{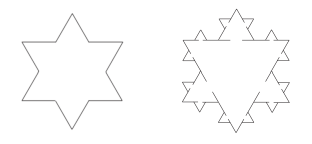} 
\caption{The Koch domain (pre-fractal, first step) on the left and the modified Koch domain (pre-fractal, second step) introduced in \cite{BCM06} on the right. In the second picture the passage between triangles is blocked by a wall with a small opening. The modified pre-fractal leads to a trap domain for the Brownian motion. Our conjecture says that $\alpha \in (0,1)$ for $\bar{X}$ on a smooth domain can be associated with the size of the openings in the second "bad" domain (modified Koch).}
\label{Fig:KochModified}
\end{figure}






{\bf Acknowledgments}\\
The author would like to thank both Reviewers for their valuable suggestions and meticulous reading of the manuscript. In particular, the author would like to extend special thanks to the second Reviewer. The suggested changes and corrections have significantly improved both the presentation and the overall value of the results.\\
The author thanks MUR for the support under the project PRIN 2022 - 2022XZSAFN: "Anomalous Phenomena on Regular and Irregular Domains: Approximating Complexity for the Applied Sciences" - CUP B53D23009540006 - PNRR M4.C2.1.1 (\url{https://www.sbai.uniroma1.it/~mirko.dovidio/prinSite/index.html} {\color{\magenta} or \url{https://sites.google.com/uniroma1.it/mirko-dovidio/prin-2022} activated after Department relocation}). The research was supported also by INdAM-GNAMPA and Sapienza University of Rome (Ateneo 2020).

\end{document}